\newtheoremstyle{mystyle}
  {}
  {}
  {\normalfont}
  { }
  {\bfseries}
  {}
  {10pt}
  { }
\theoremstyle{mystyle}
\newtheorem{thm}{Theorem}
\newtheorem{lem}{Lemma}
\newtheorem{cor}[thm]{Corollary}
\newtheorem{rmk}{Remark}
\newcommand{\dd}{\mathrm d}
\newcommand{\EE}{\mathbb E}
\newcommand{\EEt}{\mathbb E_{\theta_0}}
\newcommand{\Ep}{\mathbb E_{\theta}}
\newcommand{\Pt}{P_{\theta_0}}
\newcommand{\GG}{\mathscr G_{i-1}^n}
\newcommand{\DeX}{\Delta X_i}
\newcommand{\Xt}{X_{t_i^n}}
\newcommand{\Xs}{X_{t_{i-1}^n}}
\newcommand{\Aa}{\mathcal A_{i-1}^{\ell_1,\ell_2}}
\newcommand{\Bd}{\mathcal B_{2,i-1}^{\ell,\ell_1}}
\newcommand{\Bm}{\mathcal B_{m,i-1}^{\ell,\ell_1}}
\newcommand{\TT}{\mathsf T}
\newcommand{\Hi}{\mathcal H_{1,i}}
\newcommand{\Hd}{\mathcal H_{2,i}}
\newcommand{\KK}{\mathcal K_i^\ell}
\newcommand{\Qi}{\mathcal Q_{1,i}}
\newcommand{\Qd}{\mathcal Q_{2,i}}
\newcommand{\Qt}{\mathcal Q_{3,i}}
\newcommand{\Qf}{\mathcal Q_{4,i}}
\newcommand{\Qg}{\mathcal Q_{5,i}}
\newcommand{\Zi}{\mathcal Z_{1,i}^\ell}
\newcommand{\Zd}{\mathcal Z_{2,i}^\ell}
\newcommand{\Zt}{\mathcal Z_{3,i}^\ell}
\newcommand{\Zf}{\mathcal Z_{4,i}^\ell}
\newcommand{\Zg}{\mathcal Z_{5,i}^\ell}
\newcommand{\Zs}{\mathcal Z_{6,i}^\ell}
\newcommand{\Yj}{\mathcal Y_{j,i}^\ell}
\newcommand{\Ym}{\mathcal Y_{m,i}^\ell}
\newcommand{\tr}{\mathrm{tr}}
\newcommand{\Ro}{R_{i-1}(1,\theta)}
\newcommand{\Ri}{R_{i-1}(h_n,\theta)}
\newcommand{\Rd}{R_{i-1}(h_n^2,\theta)}
\newcommand{\Rt}{R_{i-1}(h_n^3,\theta)}
\newcommand{\Rf}{R_{i-1}(h_n^4,\theta)}
\newcommand{\lto}{\longrightarrow}
\newcommand{\pto}{\stackrel{p}{\longrightarrow}}
\newcommand{\dto}{\stackrel{d}{\longrightarrow}}
\newcommand{\wto}{\stackrel{w}{\longrightarrow}}
\newcommand{\ato}{\stackrel{\mathrm{a.s.}}{\longrightarrow}}
\begin{document}
\title[Adaptive tests for parameter changes in ergodic diffusion processes]
{
Adaptive tests for parameter changes in ergodic diffusion processes from discrete observations
}
\author[Y. Tonaki]{Yozo Tonaki}
\author[Y. Kaino]{Yusuke Kaino}
\address[Y. Tonaki]
{Graduate School of Engineering Science, Osaka University, 1-3, 
Machikaneyama, Toyonaka, Osaka, 560-8531, Japan}
\address[Y. Kaino]{
Graduate School of Engineering Science, Osaka University, 1-3, 
Machikaneyama, Toyonaka, Osaka, 560-8531, Japan}
\author[M. Uchida]{Masayuki Uchida}
\address[M. Uchida]{
Graduate School of Engineering Science, 
and Center for Mathematical Modeling and Date Science,
Osaka University, 1-3, Machikaneyama, Toyonaka, Osaka, 560-8531, Japan}




\keywords{Adaptive test, Brownian bridge, cusum test, diffusion processes, 
discrete observations, test for parameter change}


\maketitle

\begin{abstract}
We consider the adaptive test for the parameter change in discretely
observed ergodic diffusion processes based on the cusum test. 
Using two test statistics based on 
the two quasi-log likelihood functions of the diffusion parameter and the drift parameter, 
we perform the change point tests for both diffusion and drift parameters of 
the diffusion process.
It is shown that the test statistics have the limiting distribution of 
the sup of the norm of a Brownian bridge. 
Simulation results are illustrated for the 1-dimensional Ornstein-Uhlenbeck process.
\end{abstract}

\section{Introduction}
We consider a $d$-dimensional diffusion process $\{X_t\}_{t\ge0}$ 
satisfying the stochastic differential equation:
\begin{align}\label{eq2.1}
\begin{cases}
\dd X_t=b(X_t,\beta)\dd t+a(X_t,\alpha)\dd W_t,\\
X_0=x_0,
\end{cases}
\end{align}
where parameter space
$\Theta=\Theta_A\times\Theta_B$, which is a 
compact subset of 
$\mathbb R^p\times\mathbb R^q$, 
$\theta=(\alpha,\beta)\in\Theta$ is an unknown parameter and
$\{W_t\}_{t\ge0}$ 
is a $d$-dimensional standard Wiener process.
The diffusion coefficient 
$a:\mathbb R^d\times\Theta_A\lto\mathbb R^d\otimes\mathbb R^d$ and
the drift coefficient $b:\mathbb R^d\times\Theta_B\lto\mathbb R^d$
are known
except for the parameter $\theta$, 
and the true parameter $\theta_0=(\alpha_0,\beta_0)$ belongs to $\mathrm{Int\,}\Theta$.
We assume that the solution of \eqref{eq2.1} exists, and 
$P_\theta$ and $\EE_\theta$ denote the law of the 
solution and the expectation with respect to $P_\theta$, respectively. 

We deal with the testing problem for parameter change in diffusion processes
from discrete observations $\{\Xt\}_{i=0}^n$, 
where $t_i^n=ih_n$ and $\{h_n\}$ is a positive sequence 
such that $h_n\lto0$ and $nh_n\lto\infty$. We further assume $nh_n^2\lto0$.
In order to analyze  stochastic phenomena, 
we construct a statistical model with statistical inference for unknown parameters.
In some cases, the values of the parameters of the model 
may change due to some influence.
For example, if we consider a model of the stock price fluctuation, then the parameters of the statistical model 
may change due to factors such as interest rates, economic conditions, or political trends.
It is important to see if the values of the parameters of this model have changed. 
That is, we want to determine if there are any changes in such paths. 
As we see from the paths shown in Figure \ref{fig1}, 
the parameters of this model may have changed.
In particular, it would be nice to be able to determine 
whether there is a change in the paths as shown in Figure \ref{fig2}. 
However, it seems difficult to determine whether there are changes in these paths.
In this paper, we show that our proposed test statistics detect whether these paths have changed  
and the tests also tell us whether either the diffusion parameter or the drift parameter changes. 
\begin{figure}[h]
\centering
\caption{
Sample paths of the 1-dimensional Ornstein-Uhlenbeck process
$\dd X_t=-\beta(X_t-\gamma)\dd t+\alpha\dd W_t$
whose parameter
changes from $(\alpha,\beta,\gamma)=(1,1,1)$ to 
$(3,1,1)$ (left) and changes to $(1,1,-1)$ (right) at $t=50$.}
\includegraphics[height=7.5cm, width=15cm]{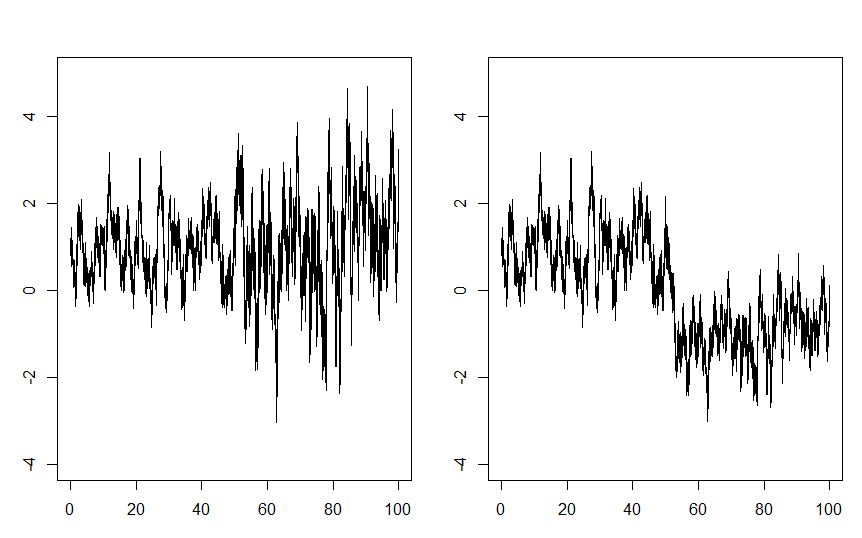}
\label{fig1}
\end{figure}

\begin{figure}[h]
\centering
\caption{
Sample paths of the 1-dimensional Ornstein-Uhlenbeck process
$\dd X_t=-\beta(X_t-\gamma)\dd t+\alpha\dd W_t$
whose parameter
from $(\alpha,\beta,\gamma)=(1,1,1)$ to $(1.05,1,1)$ (upper right),
$(1,2,1)$ (lower left),
and 
$(1,1,0.3)$ (lower right)
at $t=50$.
In the upper left figure, the parameter does not change
with $(\alpha,\beta,\gamma)=(1,1,1)$.
}
\includegraphics[height=10cm, width=15cm]{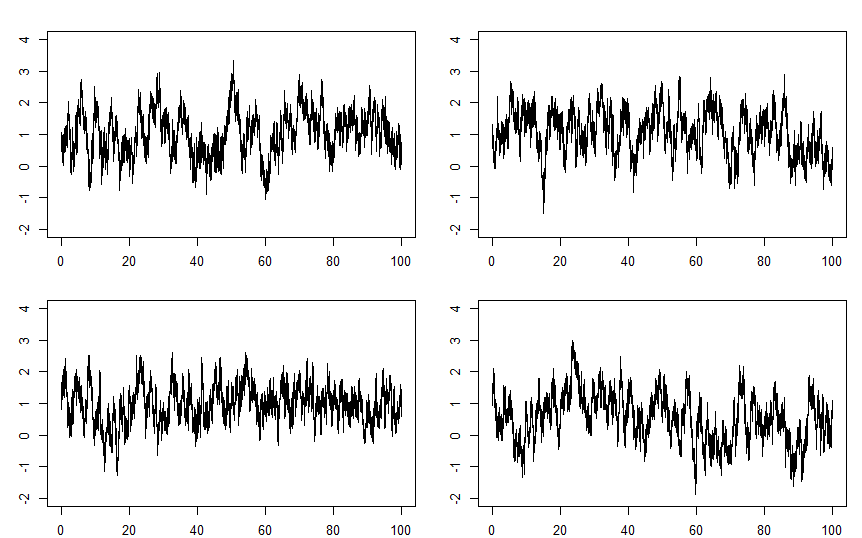}
\label{fig2}
\end{figure}


Change point problems for diffusion processes has been developed by many researchers.
Kutoyants (2004), Lee et al. (2006), 
Negri and Nishiyama (2012) and
Tsukuda (2017) 
treated the problem of testing for a change of the drift parameter
in continuously observed ergodic diffusion processes.
On the other hand, 
De Gregorio and Iacus (2008),
Song and Lee (2009), 
Lee (2011)
and
Song (2020)
studied 
the problem of testing for a change of the diffusion parameter 
for discretely observed diffusion processes,
see also Iacus and Yoshida (2012)
for the estimation problem for the change point of the volatility in a stochastic differential equation. 
Song and Lee (2009) considered the testing problem for a change of diffusion parameter $\alpha$ 
for the following stochastic differential equation with known drift function $b(x, \beta)=b(x)$
under the assumption $nh_n^p\lto\infty,\ nh_n^q\lto0\ (4< p< q)$.
Furthermore, 
Song (2020) considered the test statistic for a change in the diffusion parameter $\alpha$
of the stochastic differential equation with $a(x,\alpha)=\alpha$
under $nh_n^2\lto0$. 
%
Negri and Nishiyama (2017) proposed the test statistics for changes of the  diffusion 
parameter or  the drift parameter in the stochastic differential equation.  
In this paper, we construct  the two kinds of adaptive test statistics for 
the diffusion parameter $\alpha$ and the drift parameter $\beta$.
Using the proposed test statistics, 
we can determine whether either the diffusion parameter or the drift parameter changes. 

This paper is organized as follows.
In Section 2, we state the main results.
We propose the  adaptive test statistics and show that the test statistics have
limiting distribution of the sup of the norm of a Brownian bridge. 
In Section 3, examples and simulation studies are given.
In Section 4, we provide the proofs.
\section{Main results}
Let
$A=aa^\TT$, $\DeX= X_{t_i^n}-X_{t_{i-1}^n}$,
$(\DeX)^{\otimes2}=(\DeX)(\DeX)^\TT$,
where 
$\TT$ is the transpose.
Let $C^{k,\ell}_{\uparrow}(\mathbb R^d\times\Theta)$ be 
the space of all functions $f$ satisfying the following conditions:
\begin{enumerate}
\item[(i)] 
$f$ is continuously differentiable with respect to $x\in\mathbb R^d$ up to
order $k$ for all $\theta\in\Theta$;
\item[(ii)]  
$f$ and all its $x$-derivatives up to order $k$ are 
$\ell$ times continuously differentiable with respect to $\theta\in\Theta$;
\item[(iii)]  
$f$ and all derivatives are of polynomial growth in $x\in\mathbb R^d$ 
uniformly in $\theta\in\Theta$, i.e.,  
$g$ is of polynomial growth in $x\in\mathbb R^d$ uniformly 
in $\theta\in\Theta$ if, for some $C>0$, we have
\begin{align*}
\sup_{\theta\in\Theta}\|g(x,\theta)\|\le C(1+\|x\|)^C.
\end{align*}
\end{enumerate}
We assume the following conditions:
\begin{enumerate}
\renewcommand{\labelenumi}{{\textbf{[A\arabic{enumi}]}}}
\item 
There exists a constant $C$ such that for any $x,y\in\mathbb R^d$, 
\begin{align*}
\sup_{\alpha\in\Theta_A}\|a(x,\alpha)-a(y,\alpha)\|
+\sup_{\beta\in\Theta_B}\|b(x,\beta)-b(y,\beta)\|\le C\|x-y\|.
\end{align*}
\item   
$\displaystyle\sup_t\EE_{\theta}\left[\|X_t\|^k\right]<\infty$
for all $k\ge0$ and $\theta\in\Theta$.
\item $\displaystyle\inf_{x,\alpha}\det \left(A(x,\alpha)\right)>0$. 
\item 
$a\in C^{4,2}_{\uparrow}(\mathbb R^d\times\Theta_A)$
and
$b\in C^{4,2}_{\uparrow}(\mathbb R^d\times\Theta_B)$.
\item 
The solution of \eqref{eq2.1} is ergodic with its invariant measure $\mu_\theta$ 
such that 
\begin{align*}
\int_{\mathbb R^d}\|x\|^k\dd\mu_\theta(x)<\infty\quad\text{for all }\ k\ge0 
\text{ and } \theta\in\Theta.
\end{align*}
\end{enumerate}
\subsection{Test for the diffusion parameter}
\ 

We consider the following testing problem: 
\begin{align*}
\begin{cases}
H_0^\alpha: \alpha_0 \text{ does not change over } 0\le t\le nh_n, \\
H_1^\alpha:\text{not } H_0^\alpha.
\end{cases}
\end{align*}
Now, we additionally assume the following condition:
\begin{enumerate}
\renewcommand{\labelenumi}{{\textbf{[A\arabic{enumi}]}}}
\setcounter{enumi}{5}
\item Under $H_0^\alpha$, there exists an estimator $\hat\alpha_n$ such that 
\begin{align*}
\sqrt n(\hat\alpha_n-\alpha_0)=O_p(1).
\end{align*}
\end{enumerate}
Let
\begin{align*}
U_n^{(1)}(\alpha)=-\frac12\sum_{i=1}^n
\left({\mathrm{tr}}\left(
A^{-1}(X_{t_{i-1}^n},\alpha)\frac{(\Delta X_i)^{\otimes2}}{h_n}
\right)
+\log \det A(X_{t_{i-1}^n},\alpha)\right).
\end{align*}
Note that
\begin{align*}
\partial_{\alpha^j} U_n^{(1)}(\alpha)=\frac12\sum_{i=1}^n
\tr\left(
A^{-1}(\Xs,\alpha)\partial_{\alpha^j}A(\Xs,\alpha)
\left(
A^{-1}(\Xs,\alpha)\frac{(\DeX)^{\otimes2}}{h_n}-I_d
\right)\right)
\end{align*}
and $\displaystyle\frac{1}{\sqrt n}\partial_\alpha U_n^{(1)}(\alpha_0)$ 
has asymptotic normality, let 
\begin{align*}
\hat\eta_i
&=\tr\left(A^{-1}(\Xs,\hat\alpha_n)
\frac{(\Delta X_i)^{\otimes2}}{h_n}\right)
=\sum_{\ell_1,\ell_2=1}^d
\left(A^{-1}(\Xs,\hat\alpha_n)\right)^{\ell_1,\ell_2}
\frac{(\DeX)^{\ell_1}(\DeX)^{\ell_2}}{h_n}.
\end{align*}
The test statistic for $\alpha$ is as follows:
\begin{align*}
T_n^\alpha=\frac{1}{\sqrt{2dn}}\max_{1\le k\le n}
\left|\sum_{i=1}^k\hat\eta_i-\frac{k}{n}\sum_{i=1}^n\hat\eta_i\right|.
\end{align*}
For $k=1,2, \ldots$, 
let $\boldsymbol B_k^0$ be a $k$-dimensional standard Brownian bridge.
\begin{thm}\label{th1}
Suppose that
{\textbf{[A1]}}-{\textbf{[A6]}} hold. Then under $H_0^\alpha$, as $nh_n^2\lto0$,
\begin{align*}
T_n^\alpha\dto \sup_{0\le s\le 1}| \boldsymbol B_1^0(s) |.
\end{align*}
\end{thm}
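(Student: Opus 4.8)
The plan is to reduce $T_n^\alpha$ to a functional of a partial-sum process of an approximately i.i.d.\ martingale-difference array, then apply a functional central limit theorem (Donsker-type invariance principle) together with the continuous mapping theorem. First I would replace $\hat\eta_i$ by $\eta_i=\tr(A^{-1}(\Xs,\alpha_0)(\DeX)^{\otimes2}/h_n)$; the difference is controlled using {\textbf{[A6]}} (the $\sqrt n$-consistency of $\hat\alpha_n$), a Taylor expansion of $\alpha\mapsto A^{-1}(\Xs,\alpha)$ in the $\alpha$-direction, and {\textbf{[A4]}}, so that $\max_{1\le k\le n}\big|\sum_{i=1}^k(\hat\eta_i-\eta_i)-\tfrac kn\sum_{i=1}^n(\hat\eta_i-\eta_i)\big|$ is $o_p(\sqrt n)$. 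Here one uses that the centered increments $\tfrac kn\sum_{i=1}^n$ subtract off the leading $\sqrt n(\hat\alpha_n-\alpha_0)$-term, leaving a remainder uniformly small in $k$.

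Next I would analyze $\sum_{i=1}^k\eta_i-\tfrac kn\sum_{i=1}^n\eta_i$. Using the standard local expansion $\DeX=a(\Xs,\alpha_0)\Delta W_i+(\text{drift terms of order }h_n)+(\text{higher-order Itô remainder})$, valid under {\textbf{[A1]}}--{\textbf{[A5]}} and $nh_n^2\to0$, one gets $\eta_i=\tr\big((a^{-1}a^{-\TT})(\Xs,\alpha_0)\,(a\Delta W_i)^{\otimes2}\big)/h_n+R_i$ where $\EEt[R_i\mid\GG]$ and the fluctuations of $R_i$ are negligible after summation and centering. Writing $\zeta_i:=\|a^{-1}(\Xs,\alpha_0)\DeX\|^2/h_n$ modulo negligible terms, each $\zeta_i$ conditionally on $\GG$ is (to leading order) a $\chi^2_d$ variable: $\EEt[\zeta_i\mid\GG]=d+o(1)$ and $\mathrm{Var}_{\theta_0}(\zeta_i\mid\GG)=2d+o(1)$. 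Hence $\eta_i-d$ is a martingale-difference array (w.r.t.\ $\{\GG\}$) with conditional variance $\to 2d$, and the key point is that the centering $-\tfrac kn\sum_{i=1}^n\eta_i$ exactly cancels the deterministic mean $d$, so $\tfrac{1}{\sqrt{2dn}}\big(\sum_{i=1}^{\lfloor ns\rfloor}\eta_i-\tfrac{\lfloor ns\rfloor}{n}\sum_{i=1}^n\eta_i\big)$ converges in $D[0,1]$ to $\boldsymbol B_1^0(s)$. This invariance principle follows from a martingale FCLT: I would verify (a) the conditional-variance convergence $\tfrac{1}{2dn}\sum_{i=1}^{\lfloor ns\rfloor}\mathrm{Var}_{\theta_0}(\eta_i\mid\GG)\pto s$ using ergodicity {\textbf{[A5]}} and that the integrand $\theta_0\mapsto$ a polynomial-growth function of $x$ integrates against $\mu_{\theta_0}$, and (b) a conditional Lyapunov/Lindeberg condition from the fourth-moment bounds guaranteed by {\textbf{[A2]}} and {\textbf{[A4]}}.

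Finally, the continuous mapping theorem applied to the sup-functional $f\mapsto\sup_{0\le s\le1}|f(s)|$ on $D[0,1]$ (which is continuous at paths in $C[0,1]$, and the limit $\boldsymbol B_1^0$ has continuous paths a.s.) yields $T_n^\alpha\dto\sup_{0\le s\le1}|\boldsymbol B_1^0(s)|$. The main obstacle I anticipate is step two's bookkeeping: showing that after subtracting the drift contributions and the $\hat\alpha_n$-substitution error, the residual process is $o_p(1)$ \emph{uniformly in} $k$ (not just pointwise), which requires a maximal inequality for the martingale-type remainder and careful use of $nh_n^2\to0$ to kill the $O(h_n)$ drift bias accumulated over $n$ steps; the ergodic-averaging and FCLT parts are comparatively standard given {\textbf{[A1]}}--{\textbf{[A5]}}.
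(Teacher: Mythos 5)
Your proposal follows essentially the same route as the paper's proof: a Taylor expansion in $\alpha$ to replace $\hat\alpha_n$ by $\alpha_0$ (with the centered-sum cancellation plus $\sqrt n(\hat\alpha_n-\alpha_0)=O_p(1)$ rendering the correction terms $o_p(\sqrt n)$ uniformly in $k$), the identification of $\eta_i-d$ as a martingale-difference array with conditional variance $2d+o(1)$ via Kessler-type conditional moment expansions, a martingale invariance principle (the paper uses McLeish's Corollary 3.8) combined with the observation that $nh_n^2\to0$ kills the accumulated $O(h_n)$ conditional bias, and finally the continuous mapping theorem for the sup-functional. The plan is sound and matches the paper's decomposition and key lemmas.
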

\begin{rmk}
The test statistic $T_n^\alpha$ is a multidimensional version of the test statistic proposed in Remark
on page 842 of Lee (2011).
The asymptotic distribution of the test statistic $T_n^\alpha$ is shown in Theorem \ref{th1}.
Moreover, it follows from Theorem \ref{th4} below
that the test is consistent.
\end{rmk}
\subsection{Test for the drift parameter}
\ 

We make the following assumption:
\begin{enumerate}
\renewcommand{\labelenumi}{{\textbf{[A\arabic{enumi}]}}}
\setcounter{enumi}{6}
\item $\alpha_0$ does not change over $0\le t\le nh_n$. 
\end{enumerate}
Then we consider the following testing problem:
\begin{align*}
\begin{cases}
H_0^\beta:\beta_0 \text{ does not change over } 0\le t\le nh_n,\\
H_1^\beta:\text{ not } H_0^\beta.
\end{cases}
\end{align*}
Now, we additionally assume the following condition:
\begin{enumerate}
\renewcommand{\labelenumi}{{\textbf{[A\arabic{enumi}]}}}
\setcounter{enumi}{7}
\item Under $H_0^\beta$, there exists an estimator $(\hat\alpha_n,\hat\beta_n)$
such that  
\begin{align*}
\sqrt n(\hat\alpha_n-\alpha_0)=O_p(1),\quad\sqrt{nh_n}(\hat\beta_n-\beta_0)=O_p(1).
\end{align*}
\end{enumerate}
Let
\begin{align*}
U_n^{(2)}(\beta|\alpha)=-\frac12\sum_{i=1}^n
\frac{1}{h_n}
\tr\left(
A^{-1}(\Xs,\alpha)
\left(\Xt-\Xs-h_nb(\Xs,\beta)\right)^{\otimes2}
\right).
\end{align*}
Note that 
\begin{align*}
\partial_{\beta^j} U_n^{(2)}(\beta|\alpha)=\sum_{i=1}^n
\partial_{\beta^j} b(\Xs,\beta)^\TT A^{-1}(\Xs,\alpha)
\left(\Xt-\Xs-h_nb(\Xs,\beta)\right),
\end{align*}
and $\displaystyle\frac{1}{\sqrt{nh_n}}\partial_\beta U_n^{(2)}(\beta_0|\alpha_0)$
has asymptotic normality, let
\begin{align*}
\hat\xi_i=
1_d^\TT a^{-1}(\Xs,\hat\alpha_n)
\left(\Xt-\Xs-h_nb(\Xs,\hat\beta_n)\right).
\end{align*}
The test statistic for $\beta$ is as follows:
\begin{align*}
T_{1,n}^\beta=\frac{1}{\sqrt{dnh_n}}\max_{1\le k\le n}
\left|\sum_{i=1}^k\hat\xi_i-\frac{k}{n}\sum_{i=1}^n\hat\xi_i\right|.
\end{align*}
\begin{thm}\label{th2}
Suppose that {\textbf{[A1]}}-{\textbf{[A5]}}, {\textbf{[A7]}} and {\textbf{[A8]}} hold.
Then under $H_0^\beta$, as $nh_n^2\lto0$,
\begin{align*}
T_{1,n}^\beta\dto \sup_{0\le s\le 1}| \boldsymbol B_1^0(s) |.
\end{align*}
\end{thm}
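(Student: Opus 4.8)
The plan is to mimic the functional central limit theorem argument that underlies Theorem \ref{th1}, but now for the drift-based score increments $\hat\xi_i$. Write $\xi_i=1_d^\TT a^{-1}(\Xs,\alpha_0)(\Xt-\Xs-h_nb(\Xs,\beta_0))$ for the ``oracle'' version evaluated at the true parameters. The first step is the decomposition $\Xt-\Xs-h_nb(\Xs,\beta_0)=\int_{t_{i-1}^n}^{t_i^n}a(X_s,\alpha_0)\dd W_s + R_i$, where $R_i$ collects the drift and diffusion Taylor remainders; under {\textbf{[A1]}}--{\textbf{[A5]}} and $nh_n^2\to0$ one shows $\EE_{\theta_0}[R_i\mid\GG]=\Ro$-type terms that are $o(h_n)$ in the relevant norm and $\EE_{\theta_0}[\|R_i\|^2\mid\GG]=\Rd$, so that $\xi_i$ behaves, conditionally on $\GG$, like a martingale increment with conditional variance close to $d\,h_n$ (since $1_d^\TT a^{-1}A(a^{-1})^\TT 1_d=1_d^\TT 1_d=d$ — this is exactly why the normalization is $\sqrt{dnh_n}$) and conditional mean of order $o(h_n)$.

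Next I would establish the functional convergence of the partial-sum process. Set $M_n(s)=\frac{1}{\sqrt{dnh_n}}\sum_{i=1}^{\lfloor ns\rfloor}\xi_i$. Using the martingale central limit theorem (Jacod--Shiryaev, or Ethier--Kurtz), the two conditions to verify are: (a) the predictable quadratic variation $\frac{1}{dnh_n}\sum_{i=1}^{\lfloor ns\rfloor}\EE_{\theta_0}[\xi_i^2\mid\GG]\pto s$ uniformly in $s\in[0,1]$, which follows from the variance computation above together with the ergodic theorem {\textbf{[A5]}} applied to the bounded-by-polynomial functional $x\mapsto 1_d^\TT a^{-1}A(a^{-1})^\TT 1_d=d$ (a constant, so this is immediate up to the remainder terms), and (b) a conditional Lindeberg/Lyapunov bound $\frac{1}{(dnh_n)^2}\sum_{i=1}^n\EE_{\theta_0}[\xi_i^4\mid\GG]\pto0$, which follows from {\textbf{[A2]}}, {\textbf{[A4]}} and $\EE_{\theta_0}[\|\int a\dd W\|^4\mid\GG]=\Rd$. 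This gives $M_n\dto \boldsymbol B_1$, a standard Brownian motion, in the Skorokhod space $D[0,1]$; here the bias terms from the conditional means must be controlled in aggregate, i.e. $\frac{1}{\sqrt{nh_n}}\sum_{i=1}^n\EE_{\theta_0}[\xi_i\mid\GG]\pto0$, which is where $nh_n^2\to0$ enters.

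The third step passes from $M_n$ to the centered process appearing in $T_{1,n}^\beta$. Since $\frac{1}{\sqrt{dnh_n}}(\sum_{i=1}^k\xi_i-\frac kn\sum_{i=1}^n\xi_i)=M_n(k/n)-\frac kn M_n(1)$ (up to the $o(1)$ discrepancy between $k/n$ and $\lfloor n\cdot\rfloor/n$), the continuous mapping theorem applied to the map $w\mapsto \sup_{s}|w(s)-sw(1)|$ on $D[0,1]$ yields convergence of the oracle statistic to $\sup_{0\le s\le1}|\boldsymbol B_1(s)-s\boldsymbol B_1(1)|=\sup_{0\le s\le1}|\boldsymbol B_1^0(s)|$. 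The final step is to show the estimation error is negligible: $T_{1,n}^\beta$ differs from its oracle version by terms involving $\hat\alpha_n-\alpha_0$ and $\hat\beta_n-\beta_0$. Expanding $a^{-1}(\Xs,\hat\alpha_n)$ and $b(\Xs,\hat\beta_n)$ to first order, the dangerous contribution is $\frac{1}{\sqrt{dnh_n}}\max_k|\sum_{i=1}^k(\hat\beta_n-\beta_0)^\TT\partial_\beta b(\Xs,\beta_0)^\TT a^{-\TT}1_d\, h_n + (\text{centering})|$; using $\sqrt{nh_n}(\hat\beta_n-\beta_0)=O_p(1)$ this is $O_p(1)\cdot\frac{1}{\sqrt{nh_n}}\cdot h_n\cdot\max_k|\sum_{i=1}^k(\cdots)-\frac kn\sum_{i=1}^n(\cdots)|$, and the inner centered sum is $O_p(\sqrt n)$ by the ergodic theorem plus a CLT for the fluctuation, giving overall $O_p(\sqrt{h_n})\to0$; the $\hat\alpha_n$-term is handled the same way using $\sqrt n(\hat\alpha_n-\alpha_0)=O_p(1)$ and the extra $h_n$ factor carried by $\Xt-\Xs-h_nb(\Xs,\beta_0)=O_p(\sqrt{h_n})$. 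The main obstacle is this last step: the centering inside the maximum means one cannot simply bound term by term, and one has to argue that the estimation-error process, suitably normalized, converges to a (degenerate or non-degenerate) limit that is killed by the $\sqrt{h_n}$ prefactor — this requires a uniform-in-$k$ handle, i.e. a second functional limit theorem for the auxiliary partial sums $\sum_{i=1}^k\partial_\beta b(\Xs,\beta_0)^\TT a^{-\TT}1_d$, rather than a pointwise estimate.
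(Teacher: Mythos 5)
Your overall architecture coincides with the paper's: define the oracle increments $\xi_i=\kappa_{i-1}(\alpha_0)(\DeX-h_nb_{i-1}(\beta_0))$ with $\kappa=1_d^\TT a^{-1}$, prove a functional martingale CLT for $\frac{1}{\sqrt{dnh_n}}\sum_{i\le [ns]}\xi_i$ (the conditional variance computation $1_d^\TT a^{-1}A a^{-\TT}1_d=d$ and the bias control via $nh_n^2\to0$ are exactly the paper's \eqref{zx1} and \eqref{as2}), apply the continuous mapping theorem to get the Brownian bridge, and then show the plug-in of $(\hat\alpha_n,\hat\beta_n)$ is negligible. That part is fine and matches the paper, which uses Kessler's conditional moment expansions (Lemma \ref{ke7}) and McLeish's Corollary 3.8 where you invoke the It\^o decomposition and Jacod--Shiryaev; the difference there is cosmetic.

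The genuine gap is precisely the step you flag as ``the main obstacle.'' You correctly observe that a term-by-term bound fails for the first-order correction terms (indeed, for the $\hat\alpha_n$-term the crude bound $\max_k\le 2\sum_{i=1}^n$ combined with $\DeX=O_p(\sqrt{h_n})$ gives exactly $O_p(1)$, not $o_p(1)$), so a uniform-in-$k$ control of the centered auxiliary sums is indispensable — but you do not supply it, and the route you sketch (a functional CLT for $\sum_{i\le k}\partial_\beta b_{i-1}^\TT a^{-\TT}1_d$ yielding an $O_p(\sqrt n)$ bound on the centered maximum) is both unproven and stronger than necessary. The paper closes this gap with Lemmas \ref{lem1} and \ref{lem2}: a uniform-in-$k$ law of large numbers for $\max_{1\le k\le n}\|\sum_{i=1}^k f_{i-1}-\frac kn\sum_{i=1}^n f_{i-1}\|=o_p(n)$ (and the analogous $o_p(nh_n)$ statement when a factor $(\DeX)^{\ell}$ or $(\DeX)^{\ell_1}(\DeX)^{\ell_2}$ is present), proved by splitting at $k=[n^{1/r}]$, invoking the uniform ergodic theorem of Song and Lee (2009), and controlling the martingale part with the $L^2$ maximal inequality of Hall and Heyde. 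These $o_p$ rates suffice because of the prefactors $1/n$ and $1/(n\sqrt{h_n})$ in \eqref{dp1}--\eqref{dp2}; no CLT for the auxiliary sums is needed. A second, smaller issue: you expand $\kappa(\cdot,\hat\alpha_n)$ only ``to first order,'' which leaves the coefficient of $(\hat\alpha_n-\alpha_0)$ evaluated at a random intermediate point; the centered-max lemma with $\DeX$ factors is available only at the fixed $\theta_0$, so you need the paper's second-order expansion, which pins the linear coefficient at $\alpha_0$ (handled by Lemma \ref{lem2}) and relegates the $\hat\alpha_n$-dependence to a quadratic remainder of order $n(\hat\alpha_n-\alpha_0)^{\otimes 2}=O_p(1)$ that \emph{can} be bounded term by term.
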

$T_{1,n}^\beta$ is the simple test statistic, 
but the test may not be consistent, see Section 3 below. 
In this case, we consider another test statistic.
Let
\begin{align*}
\hat\zeta_i&=\partial_\beta b(\Xs,\hat\beta_n)^\TT A^{-1}(\Xs,\hat\alpha_n)
\left(\Xt-\Xs-h_nb(\Xs,\hat\beta_n)\right),\\
\mathcal I_n
&=\frac1n\sum_{i=1}^n
\partial_\beta b(\Xs,\hat\beta_n)^\TT A^{-1}(\Xs,\hat\alpha_n)
\partial_\beta b(\Xs,\hat\beta_n).
\end{align*}
Note that $\mathcal I_n$ is a consistent estimator of 
\begin{align*}
\mathcal I:=\int_{\mathbb R^d}
\partial_\beta b(x,\beta_0)^\TT A^{-1}(x,\alpha_0)
\partial_\beta b(x,\beta_0)\dd\mu_{\theta_0}(x).
\end{align*}
The test statistic for $\beta$ is as follows:
\begin{align*}
T_{2,n}^\beta=\frac{1}{\sqrt{nh_n}}\max_{1\le k\le n}
\left\|\mathcal I_n^{-1/2}\left(\sum_{i=1}^k\hat\zeta_i-\frac kn
\sum_{i=1}^n\hat\zeta_i\right)\right\|.
\end{align*}
We additionally assume the following condition:
\begin{enumerate}
\renewcommand{\labelenumi}{{\textbf{[A\arabic{enumi}]}}}
\setcounter{enumi}{8}
\item There exists an integer $m\ge 3$ such that
$nh_n^{m/(m-1)}\lto\infty$ and $b\in C^{4,m+1}_{\uparrow}(\mathbb R^d\times\Theta_B)$.
\end{enumerate}
\noindent
\begin{thm}\label{th03}
Suppose that {\textbf{[A1]}}-{\textbf{[A5]}}, {\textbf{[A7]}}, {\textbf{[A8]}} 
and {\textbf{[A9]}} hold.
Then under $H_0^\beta$, as $nh_n^2\lto0$,
\begin{align*}
T_{2,n}^\beta\dto \sup_{0\le s\le1}\| { \boldsymbol B_q^0(s)} \|.
\end{align*}
\end{thm}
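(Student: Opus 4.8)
The plan is to mimic the analysis behind Theorems \ref{th1} and \ref{th2}, but now for the $q$-dimensional score increments $\hat\zeta_i$, and to show that the normalized partial-sum (cusum) process converges weakly in $D([0,1])^q$ to a $q$-dimensional Brownian bridge $\boldsymbol B_q^0$; the continuous mapping theorem applied to $s\mapsto\|\mathcal I_n^{-1/2}(\cdot)\|$, together with $\mathcal I_n\pto\mathcal I$, then yields the stated convergence. First I would write, for $s\in[0,1]$,
\begin{align*}
\mathbb S_n(s)=\frac{1}{\sqrt{nh_n}}\left(\sum_{i=1}^{\lfloor ns\rfloor}\hat\zeta_i-\frac{\lfloor ns\rfloor}{n}\sum_{i=1}^n\hat\zeta_i\right),
\end{align*}
and reduce the analysis of $\hat\zeta_i$ (evaluated at $(\hat\alpha_n,\hat\beta_n)$) to that of the ``oracle'' increments $\zeta_i$ obtained by replacing $(\hat\alpha_n,\hat\beta_n)$ with $(\alpha_0,\beta_0)$. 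This is where {\textbf{[A8]}} and {\textbf{[A9]}} enter: a Taylor expansion in $(\alpha,\beta)$ around $(\alpha_0,\beta_0)$, controlled by $\sqrt n(\hat\alpha_n-\alpha_0)=O_p(1)$ and $\sqrt{nh_n}(\hat\beta_n-\beta_0)=O_p(1)$, must show that the estimation error contributes uniformly negligibly to $\mathbb S_n$. The $\hat\beta_n$-error term is the delicate one: its contribution to $\frac{1}{\sqrt{nh_n}}\sum_{i=1}^k(\hat\zeta_i-\zeta_i)$ is, to leading order, $-\frac{1}{\sqrt{nh_n}}\sum_{i=1}^k h_n\,\partial_\beta b^\TT A^{-1}\partial_\beta b\,(\hat\beta_n-\beta_0)$ plus a centering term, and one uses the ergodic theorem $\frac1k\sum_{i=1}^k\partial_\beta b^\TT A^{-1}\partial_\beta b\to s\,\mathcal I$ uniformly in $s$ (or rather $\frac1n\sum_{i=1}^{\lfloor ns\rfloor}\to s\mathcal I$) so that this term telescopes against the $\frac{k}{n}\sum_{i=1}^n$ subtraction and cancels up to $o_p(1)$; the higher-order remainders are where the stronger smoothness $b\in C^{4,m+1}_\uparrow$ and the rate $nh_n^{m/(m-1)}\to\infty$ are consumed.

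Next I would establish the functional convergence of the oracle cusum process. Write $\DeX=h_n b(\Xs,\beta_0)+a(\Xs,\alpha_0)\Delta W_i+r_i$ with $r_i$ the usual Euler remainder, so that
\begin{align*}
\zeta_i=\partial_\beta b(\Xs,\beta_0)^\TT A^{-1}(\Xs,\alpha_0)a(\Xs,\alpha_0)\Delta W_i+(\text{remainder}),
\end{align*}
and verify, via the standard diffusion-increment moment estimates under {\textbf{[A1]}}--{\textbf{[A5]}} (and $nh_n^2\to0$ for the bias), that $\zeta_i=\chi_i+\rho_i$ where $\chi_i=\partial_\beta b^\TT A^{-1}a\,\Delta W_i$ evaluated at $\Xs$ is a martingale-difference array with respect to $\GG$ and $\frac{1}{\sqrt{nh_n}}\sum_{i=1}^{\lfloor ns\rfloor}\rho_i\pto0$ uniformly in $s$. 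For the martingale part, the predictable quadratic variation is
\begin{align*}
\frac{1}{nh_n}\sum_{i=1}^{\lfloor ns\rfloor}\EE\!\left[\chi_i\chi_i^\TT\mid\GG\right]
=\frac1n\sum_{i=1}^{\lfloor ns\rfloor}\partial_\beta b(\Xs,\beta_0)^\TT A^{-1}(\Xs,\alpha_0)\partial_\beta b(\Xs,\beta_0)\pto s\,\mathcal I
\end{align*}
by {\textbf{[A5]}}, and a Lindeberg condition follows from {\textbf{[A2]}}, {\textbf{[A4]}} and moment bounds on $\Delta W_i$. The martingale functional CLT (e.g. Ethier--Kurtz / Jacod--Shiryaev) then gives $\frac{1}{\sqrt{nh_n}}\sum_{i=1}^{\lfloor ns\rfloor}\chi_i\To\mathcal I^{1/2}\boldsymbol W_q(s)$ in $D([0,1])^q$, hence $\mathbb S_n\To\mathcal I^{1/2}(\boldsymbol W_q(s)-s\boldsymbol W_q(1))=\mathcal I^{1/2}\boldsymbol B_q^0(s)$.

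Finally, combining the two steps, $\frac{1}{\sqrt{nh_n}}(\sum_{i=1}^{\lfloor ns\rfloor}\hat\zeta_i-\frac{\lfloor ns\rfloor}{n}\sum_{i=1}^n\hat\zeta_i)\To\mathcal I^{1/2}\boldsymbol B_q^0(s)$; since $\mathcal I_n\pto\mathcal I$ and $\mathcal I$ is positive definite (invertibility being implicit in the statement, guaranteed by {\textbf{[A3]}}--{\textbf{[A4]}} and nondegeneracy of $\partial_\beta b$), $\mathcal I_n^{-1/2}\pto\mathcal I^{-1/2}$, so Slutsky plus the continuous mapping theorem for $\Phi(f)=\sup_{0\le s\le1}\|f(s)\|$ give $T_{2,n}^\beta\dto\sup_{0\le s\le1}\|\mathcal I^{-1/2}\mathcal I^{1/2}\boldsymbol B_q^0(s)\|=\sup_{0\le s\le1}\|\boldsymbol B_q^0(s)\|$. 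I expect the main obstacle to be the uniform (in $k$) control of the $\hat\beta_n$-substitution error: unlike the one-dimensional statistic $T_{1,n}^\beta$, here one cannot afford a non-cancelling drift in the partial sums, so the argument must carefully exploit the ergodic averaging to make the $(\hat\beta_n-\beta_0)$-term telescope, and this is precisely why the extra hypothesis {\textbf{[A9]}} (higher differentiability and the refined rate $nh_n^{m/(m-1)}\to\infty$) is needed relative to Theorem \ref{th2}.
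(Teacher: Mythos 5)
Your plan follows essentially the same route as the paper's proof: Taylor-expand $\hat\zeta_i$ in $(\hat\alpha_n,\hat\beta_n)$ around $(\alpha_0,\beta_0)$ (with the $\beta$-expansion carried to order $m$, which is exactly where \textbf{[A9]} and the rate $nh_n^{m/(m-1)}\to\infty$ are consumed), show each estimation-error term's cusum is uniformly negligible by factoring out the common $O_p(1)$ estimator factors and using the uniform ergodic averaging so that the $\frac{k}{n}\sum_{i=1}^n$ subtraction cancels the drift, prove the oracle cusum converges via a martingale functional CLT with Cram\'er--Wold (the paper uses McLeish's Corollary 3.8 with Kessler-type conditional moment bounds rather than an explicit Euler--Wiener decomposition, but this is the same argument in substance), and finish with $\mathcal I_n\pto\mathcal I$, Slutsky and the continuous mapping theorem. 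The proposal correctly identifies all the delicate points and matches the paper's proof in structure and substance.
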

We assume the following condition instead of {\textbf{[A9]}}:
\begin{enumerate}
\renewcommand{\labelenumi}{{\textbf{[A\arabic{enumi}']}}}
\setcounter{enumi}{8}
\item 
There exists an integer $M \ge1$ such that 
$b\in C_{\uparrow}^{4,M+1}(\mathbb R^d\times\Theta_B)$ and
$\partial_{\beta^{\ell_{M+1}}}\cdots\partial_{\beta^{\ell_1}}b(x,\beta)=0$ for 
$1\le \ell_1,\ldots,\ell_{M+1}\le q$.
\end{enumerate}
\begin{cor}\label{th3}
Suppose that {\textbf{[A1]}}-{\textbf{[A5]}}, {\textbf{[A7]}}, {\textbf{[A8]}} 
and {\textbf{[A9']}} hold.
Then under $H_0^\beta$, as $nh_n^2\lto0$,
\begin{align*}
T_{2,n}^\beta\dto \sup_{0\le s\le1}\| \boldsymbol B_q^0(s) \|.
\end{align*}
\end{cor}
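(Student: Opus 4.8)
The plan is to obtain Corollary \ref{th3} directly from Theorem \ref{th03} by verifying that {\textbf{[A9']}} can play the role of {\textbf{[A9]}} throughout the argument. First I would locate where {\textbf{[A9]}} is actually used in the proof of Theorem \ref{th03}: it should enter only to control the remainder of a Taylor expansion in the drift parameter of
\[
\frac{1}{\sqrt{nh_n}}\Bigl(\sum_{i=1}^k\hat\zeta_i-\frac kn\sum_{i=1}^n\hat\zeta_i\Bigr)
\]
about the oracle version in which $(\hat\alpha_n,\hat\beta_n)$ is replaced by $(\alpha_0,\beta_0)$; there $nh_n^{m/(m-1)}\lto\infty$ makes the order-$m$ remainder (paired with $\hat\beta_n-\beta_0=O_p((nh_n)^{-1/2})$) negligible uniformly in $k$, while $b\in C^{4,m+1}_{\uparrow}$ handles the finitely many explicit lower-order terms. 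Under {\textbf{[A9']}}, $b(x,\cdot)$ is a polynomial in $\beta$ of degree at most $M$, so $\partial_{\beta^{\ell_{M+1}}}\cdots\partial_{\beta^{\ell_1}}b\equiv0$: the $\beta$-expansion terminates at order $M$ with \emph{identically zero} remainder, and $b\in C^{4,M+1}_{\uparrow}$ supplies precisely the regularity that $b\in C^{4,m+1}_{\uparrow}$ supplied before. Hence the rate condition $nh_n^{m/(m-1)}\lto\infty$ is no longer needed and only the standing assumptions $nh_n\lto\infty$, $nh_n^2\lto0$ remain in force.

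Next I would make the reduction explicit. Writing $\hat\beta_n-\beta_0=u_n/\sqrt{nh_n}$ with $u_n=O_p(1)$ from {\textbf{[A8]}}, I expand $b(\Xs,\hat\beta_n)$, $\partial_\beta b(\Xs,\hat\beta_n)$ and $\mathcal I_n$ as finite Taylor polynomials in $\hat\beta_n-\beta_0$ with no remainder, and expand $A^{-1}(\Xs,\hat\alpha_n)$ about $\alpha_0$ using $\hat\alpha_n-\alpha_0=O_p(n^{-1/2})$ and {\textbf{[A4]}}. Substituting into $\hat\zeta_i$ and $\mathcal I_n$, the leading term is the oracle summand $\partial_\beta b(\Xs,\beta_0)^\TT A^{-1}(\Xs,\alpha_0)(\Xt-\Xs-h_nb(\Xs,\beta_0))$, for which the normalized cusum process converges — by the martingale functional CLT and the law of large numbers for diffusion functionals of {\textbf{[A1]}}--{\textbf{[A5]}}, identifying the limit covariance as $\mathcal I$ — to $\mathcal I^{1/2}\boldsymbol B_q^0$ after centering; together with $\mathcal I_n\pto\mathcal I$ and the continuous mapping theorem this yields $T_{2,n}^\beta\dto\sup_{0\le s\le1}\|\boldsymbol B_q^0(s)\|$. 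Every correction term is a finite sum of products of ergodic averages of $C^{4,\cdot}_{\uparrow}$ functions with powers of $\hat\alpha_n-\alpha_0$ and $\hat\beta_n-\beta_0$ (and possibly an extra factor $h_n$ from the $h_nb(\Xs,\beta)$ term), hence after dividing by $\sqrt{nh_n}$ is $O_p$ of a strictly negative power of $nh_n$.

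The hard part — really the only nontrivial point, and the same one as in Theorem \ref{th03} — will be to show that each correction term is negligible \emph{uniformly in} $k$, i.e. that $\max_{1\le k\le n}\|(nh_n)^{-1/2}(\sum_{i=1}^k r_i-(k/n)\sum_{i=1}^n r_i)\|=o_p(1)$ for each correction summand $r_i$. Here I would use $\max_{1\le k\le n}\|S_k-(k/n)S_n\|\le 2\max_{1\le k\le n}\|S_k\|$ and split $r_i$ into a martingale-increment part, bounded via Doob's inequality together with the moment estimates for $\Xt-\Xs-h_nb(\Xs,\beta_0)$ available under {\textbf{[A1]}}--{\textbf{[A5]}}, and a predictable part, controlled by the law of large numbers for diffusion functionals. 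Since {\textbf{[A9']}} removes the order-$m$ remainder outright, this is verbatim the estimate already carried out in the proof of Theorem \ref{th03} with the single {\textbf{[A9]}}-dependent step deleted, so no new ingredient is required and the conclusion of Theorem \ref{th03} persists with {\textbf{[A9']}} in place of {\textbf{[A9]}}.
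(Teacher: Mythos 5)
Your proposal is correct and follows essentially the same route as the paper: the paper's proof of Corollary \ref{th3} likewise observes that \textbf{[A9]} enters the proof of Theorem \ref{th03} only through the order-$m$ Taylor remainder estimate \eqref{hf3}, and that under \textbf{[A9']} the corresponding remainder $\mathcal Y_{M,i}^\ell$ vanishes identically (since it is built from $(M+1)$-fold $\beta$-derivatives of $b$), so the rate condition $nh_n^{m/(m-1)}\lto\infty$ is not needed and the rest of the argument carries over verbatim. Your additional remarks on the uniform-in-$k$ control of the remaining correction terms match what is already done in the proof of Theorem \ref{th03}.
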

\begin{rmk}
Since $h_n \lto 0$, $nh_n \lto \infty$ and $nh_n^2 \lto 0$,
if $h_n=O(n^{-\alpha})$ for some $\alpha \in (\frac{1}{2}, 1)$,
then there exists an integer $m >\frac{1}{1 - \alpha}>2$ such that
$n h_n^{m/(m-1)} \lto \infty$.
Therefore, 
in  {\textbf{[A9]}} of Theorem \ref{th03}, 
we make the assumption of the smoothness of the drift coefficient 
$b$ with respect to $\beta$ up to order $m+1$ $(\geq 4)$
when the drift coefficient $b$ is general. 
In particular, if $b \in C_{\uparrow}^{4, \infty}(\mathbb R^d, \Theta_B)$, 
then {\textbf{[A9]}} is satisfied.
In contrast, Corollary \ref{th3} indicates that 
when the drift coefficient $b(x,\beta)=-\beta x$,
we set $M=1$ in  {\textbf{[A9']}} and
there is no need to assume $m \geq 3$ in {\textbf{[A9]}}.
\end{rmk}
\subsection{The powers of tests}
\

First, we consider the power of the test for the diffusion parameter $\alpha$,
that is, the following testing problem:
$H_0^\alpha: \alpha_0$ does not change over $0\le t\le nh_n$, 
$H_1^\alpha:$ There exists $0< t^*<1$ such that 
\begin{align*}
\alpha_0=
\begin{cases}
\alpha_1^*,&0\le t\le [nt^*]h_n,\\
\alpha_2^*,&[nt^*]h_n< t\le nh_n,
\end{cases}
\end{align*}
where $\alpha_1^*\neq\alpha_2^*$.
Now, we assume the following conditions:
\renewcommand{\labelenumi}{{\textbf{[B\arabic{enumi}]}}}
\begin{enumerate}
\item Under $H_1^\alpha$, there exist
$\bar\alpha_*\in\Theta_A$ and 
{ an estimator} $\hat\alpha_n$
such that  
\begin{align*}
\hat\alpha_n-\bar\alpha_*=o_p(1).
\end{align*}
\end{enumerate}
{ Set}
\begin{align*}
F(\alpha)=\int_{\mathbb R^d}
\tr\left(
A^{-1}(x,\bar\alpha_*)A(x,\alpha)
\right)\dd\mu_{\alpha}(x).
\end{align*}
\begin{enumerate}
\setcounter{enumi}{1}
\item
$F(\alpha_1^*)\neq F(\alpha_2^*)$ under $H_1^\alpha$.
\end{enumerate}
Let $0<\epsilon<1$ and
$w_k(\epsilon)$ denote the upper-$\epsilon$ point  of 
${\displaystyle \sup_{0\le s\le 1}\|\boldsymbol B_k^0(s)\|}$, that is,
${\displaystyle
P(\sup_{0\le s\le 1}\|\boldsymbol B_k^0(s)\|>w_k(\epsilon))=\epsilon}$.
\begin{thm}\label{th4}
Assume {\textbf{[A1]}}-{\textbf{[A4]}}, {\textbf{[B1]}} and {\textbf{[B2]}}. 
Then, under $H_1^\alpha$,
\begin{align*}
P\left(T_n^\alpha>w_1(\epsilon)\right)\lto1.
\end{align*}
Hence, if {\textbf{[B1]}} and {\textbf{[B2]}} are satisfied, 
then the test $T_n^\alpha$ is consistent.
\end{thm}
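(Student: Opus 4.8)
The plan is to show that, under $H_1^\alpha$, the statistic $T_n^\alpha$ tends to $+\infty$ in probability; since $w_1(\epsilon)$ is a fixed constant, this yields $P(T_n^\alpha>w_1(\epsilon))\to1$ at once, which is exactly consistency. Write
\begin{align*}
S_k=\sum_{i=1}^k\hat\eta_i-\frac kn\sum_{i=1}^n\hat\eta_i ,
\end{align*}
so that $T_n^\alpha=(2dn)^{-1/2}\max_{1\le k\le n}|S_k|\ge(2dn)^{-1/2}\,|S_{[nt^*]}|$ for $n$ large enough (as $0<t^*<1$). It therefore suffices to prove that $n^{-1}S_{[nt^*]}$ converges in probability to a nonzero constant, because then $T_n^\alpha\ge\sqrt{n/(2d)}\,|n^{-1}S_{[nt^*]}|\pto\infty$.

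The next step is to identify the limits of $n^{-1}\sum_{i=1}^{[nt^*]}\hat\eta_i$ and $n^{-1}\sum_{i=[nt^*]+1}^{n}\hat\eta_i$. On the first block the observed process solves \eqref{eq2.1} with diffusion parameter $\alpha_1^*$, and the It\^o--Taylor expansion gives $h_n^{-1}(\DeX)^{\otimes2}=A(\Xs,\alpha_1^*)+r_i$, where $\EE[r_i\mid\GG]$ is of order $h_n(1+\|\Xs\|)^C$ and $r_i-\EE[r_i\mid\GG]$ are martingale differences with $\GG$-conditional second moments of order $(1+\|\Xs\|)^C$. Using [B1] together with the continuous differentiability in $\alpha$ and the polynomial growth in $x$ (uniform in $\alpha$) of $A^{-1}$ and its $\alpha$-derivatives, guaranteed by [A3]--[A4], one replaces $\hat\alpha_n$ by $\bar\alpha_*$ at the cost of an error of size $o_p(1)\cdot O_p(1)$; then, using moment bounds (valid under $H_1^\alpha$ as well) for the conditional-mean part and Doob's maximal inequality for the martingale part — the prefactor $1/n$, combined with $nh_n\to\infty$, kills the resulting $O_p(h_n)$ and $O_p(n^{-1/2})$ contributions — one is reduced to $n^{-1}\sum_{i=1}^{[nt^*]}\tr\bigl(A^{-1}(\Xs,\bar\alpha_*)A(\Xs,\alpha_1^*)\bigr)$. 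By the ergodic theorem for the diffusion governed by $\alpha_1^*$ (valid since $[nt^*]h_n\to\infty$) this converges in probability to $t^*F(\alpha_1^*)$. Repeating the argument on the second block, in which the diffusion restarts from $X_{[nt^*]h_n}$ and then evolves with parameter $\alpha_2^*$ over a time interval of length $(n-[nt^*])h_n\to\infty$, gives $n^{-1}\sum_{i=[nt^*]+1}^{n}\hat\eta_i\pto(1-t^*)F(\alpha_2^*)$.

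Combining the two limits,
\begin{align*}
\frac{1}{n}S_{[nt^*]}
=\frac{n-[nt^*]}{n}\cdot\frac1n\sum_{i=1}^{[nt^*]}\hat\eta_i
-\frac{[nt^*]}{n}\cdot\frac1n\sum_{i=[nt^*]+1}^{n}\hat\eta_i
\pto t^*(1-t^*)\bigl(F(\alpha_1^*)-F(\alpha_2^*)\bigr),
\end{align*}
and the right-hand side is nonzero by [B2] since $0<t^*<1$. Hence $T_n^\alpha\ge\sqrt{n/(2d)}\,\bigl(\,|t^*(1-t^*)(F(\alpha_1^*)-F(\alpha_2^*))|+o_p(1)\bigr)\pto\infty$, which proves the theorem, consistency being immediate. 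The main obstacle is the analysis of the second block: under $H_1^\alpha$ the observed process is only piecewise ergodic, so the ergodic theorem has to be applied to a diffusion with parameter $\alpha_2^*$ started from the random — but, by [A2], $L^k$-bounded — state $X_{[nt^*]h_n}$, which requires combining the usual ergodic convergence with the tightness of the law of $X_{[nt^*]h_n}$. All the remaining steps, namely the plug-in replacement of $\hat\alpha_n$ by $\bar\alpha_*$ and the control of the It\^o--Taylor remainder over each block, are of exactly the type already carried out in the proof of Theorem \ref{th1}.
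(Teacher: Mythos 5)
Your proposal is correct and follows essentially the same route as the paper: lower-bound $T_n^\alpha$ by its value at $k=[nt^*]$, establish the block limits $n^{-1}\sum_{i\le[nt^*]}\hat\eta_i\pto t^*F(\alpha_1^*)$ and $n^{-1}\sum_{i>[nt^*]}\hat\eta_i\pto(1-t^*)F(\alpha_2^*)$ via a Taylor/plug-in replacement of $\hat\alpha_n$ by $\bar\alpha_*$ under \textbf{[B1]} together with conditional-moment (Genon-Catalot--Jacod type) limit arguments, and conclude that $n^{-1}S_{[nt^*]}\pto t^*(1-t^*)(F(\alpha_1^*)-F(\alpha_2^*))\neq0$ by \textbf{[B2]}, so $T_n^\alpha\pto\infty$. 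The only difference is cosmetic bookkeeping of the two block averages, and you are in fact slightly more explicit than the paper about the care needed in applying the ergodic limit on the post-change block started from the random state $X_{[nt^*]h_n}$.
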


In the following, we assume {\textbf{[A7]}}.
We consider the power of the test for the drift parameter $\beta$, that is, 
the following testing problem:
$H_0^\beta: \beta_0$ does not change over $0\le t\le nh_n$, 
$H_1^\beta:$ There exists $0< t^*<1$ such that 
\begin{align*}
\beta_0=\begin{cases}
\beta_1^*,&0\le t\le [nt^*]h_n,\\
\beta_2^*,&[nt^*]h_n< t\le nh_n,
\end{cases}
\end{align*}
where $\beta_1^*\neq\beta_2^*$.
Now, we assume the following conditions:
\begin{enumerate}
\setcounter{enumi}{2}
\item Under $H_1^\beta$, there exist $\bar\beta_*\in\Theta_B$ and 
estimators $\hat\alpha_n$ and $\hat\beta_n$
such that  
\begin{align*}
\sqrt n(\hat\alpha_n-\alpha_0)=O_p(1),\quad
\sqrt{nh_n} (\hat\beta_n-\bar\beta_*)=O_p(1).
\end{align*}
\end{enumerate}
\begin{enumerate}
\renewcommand{\labelenumi}{{\textbf{[B\arabic{enumi}']}}}
\setcounter{enumi}{2}
\item Under $H_1^\beta$, there exist $\bar\beta_*\in\Theta_B$ and 
estimators $\hat\alpha_n$ and $\hat\beta_n$
such that  
\begin{align*}
\sqrt n(\hat\alpha_n-\alpha_0)=O_p(1),\quad
\hat\beta_n-\bar\beta_*=o_p(1).
\end{align*}
\end{enumerate}


Let
\begin{align*}
G(\beta)&=\int_{\mathbb R^d}1_d^\TT a^{-1}(x,\alpha_0)
(b(x,\beta)-b(x,\bar\beta_*))\dd\mu_{(\alpha_0,\beta)}(x),\\
H(\beta)&=\int_{\mathbb R^d}\partial_\beta(x,\bar\beta_*)^\TT A^{-1}(x,\alpha_0)
(b(x,\beta)-b(x,\bar\beta_*))\dd\mu_{(\alpha_0,\beta)}(x).
\end{align*}
\begin{enumerate}
\setcounter{enumi}{3}
\item $G(\beta_1^*)\neq G(\beta_2^*)$ under $H_1^\beta$. 
\item $H(\beta_1^*)\neq H(\beta_2^*)$ under $H_1^\beta$. 
\end{enumerate}
\begin{thm}\label{th5}
Assume {\textbf{[A1]}}-{\textbf{[A4]}}, {\textbf{[A7]}}, 
{\textbf{[B3']}} and {\textbf{[B4]}}. 
Then, under $H_1^\beta$,
\begin{align*}
P\left(T_{1,n}^\beta>w_1(\epsilon)\right)\lto1.
\end{align*}
\end{thm}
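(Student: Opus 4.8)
The plan is to show that, under $H_1^\beta$, the statistic diverges, and for this it suffices to bound it below by the CUSUM increment at the single index $k=n_*:=[nt^*]$:
\[
T_{1,n}^\beta\ \ge\ \frac{1}{\sqrt{dnh_n}}\Bigl|\sum_{i=1}^{n_*}\hat\xi_i-\frac{n_*}{n}\sum_{i=1}^{n}\hat\xi_i\Bigr|,
\]
so it is enough to prove that the right-hand side tends to $\infty$ in probability, which forces $P(T_{1,n}^\beta>w_1(\epsilon))\lto1$ for every fixed $\epsilon$. First I would write $\DeX=\int_{t_{i-1}^n}^{t_i^n}b(X_s,\beta_0^{(i)})\,\dd s+\int_{t_{i-1}^n}^{t_i^n}a(X_s,\alpha_0)\,\dd W_s$, where $\beta_0^{(i)}$ equals $\beta_1^*$ for $i\le n_*$ and $\beta_2^*$ for $i>n_*$ and the diffusion coefficient is governed by $\alpha_0$ throughout (by {\textbf{[A7]}}); this gives $\hat\xi_i=M_i+D_i+R_i$ with
\begin{align*}
M_i&=1_d^\TT a^{-1}(\Xs,\hat\alpha_n)\int_{t_{i-1}^n}^{t_i^n}a(X_s,\alpha_0)\,\dd W_s,\\
D_i&=h_n\,1_d^\TT a^{-1}(\Xs,\hat\alpha_n)\bigl(b(\Xs,\beta_0^{(i)})-b(\Xs,\hat\beta_n)\bigr),\\
R_i&=1_d^\TT a^{-1}(\Xs,\hat\alpha_n)\int_{t_{i-1}^n}^{t_i^n}\bigl(b(X_s,\beta_0^{(i)})-b(\Xs,\beta_0^{(i)})\bigr)\,\dd s.
\end{align*}
The aim is to show that $D_i$ contributes a term of order exactly $nh_n$, while the contributions of $M_i$ and $R_i$ are of smaller order.

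For the martingale part, since the diffusion coefficient is unchanged under $H_1^\beta$, the argument is the one from the proof of Theorem \ref{th2}: a Taylor expansion of $a^{-1}(\Xs,\hat\alpha_n)$ around $\alpha_0$ together with $\sqrt n(\hat\alpha_n-\alpha_0)=O_p(1)$ from {\textbf{[B3']}} reduces $\sum_{i=1}^kM_i$ to a sum of square-integrable martingale differences with conditional variances $\approx dh_n$, the correction terms being $o_p(1)$; hence $\frac{1}{\sqrt{dnh_n}}(\sum_{i=1}^{n_*}M_i-\frac{n_*}{n}\sum_{i=1}^{n}M_i)=O_p(1)$. For the remainder, Itô's formula applied to $b(\cdot,\beta_0^{(i)})$ (using {\textbf{[A1]}}, {\textbf{[A2]}}, {\textbf{[A4]}}) shows that $\sum_{i=1}^{n_*}R_i$ has a drift part of order $O_p(nh_n^2)$ and a martingale part of order $O_p(\sqrt{nh_n^3})$, so $\frac{1}{\sqrt{dnh_n}}\sum_{i=1}^{n_*}R_i=o_p(1)$ since $nh_n^2\lto0$.

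The crux is $D_i$. Using {\textbf{[B3']}} (so $\hat\alpha_n-\alpha_0=o_p(1)$ and $\hat\beta_n-\bar\beta_*=o_p(1)$), the mean value theorem and the uniform-in-$\theta$ polynomial growth from {\textbf{[A4]}}, I would replace $(\hat\alpha_n,\hat\beta_n)$ by $(\alpha_0,\bar\beta_*)$ in $D_i$; this changes $\sum_{i=1}^kD_i$ by at most $o_p(1)\cdot h_n\sum_{i=1}^kC(1+\|\Xs\|)^C=o_p(1)\cdot O_p(nh_n)=o_p(nh_n)$, where the last sum is $O_p(nh_n)$ by {\textbf{[A2]}} and a law of large numbers for $\frac1n\sum(1+\|\Xs\|)^C$. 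After this replacement, $\frac{1}{nh_n}\sum_{i=1}^kD_i=\frac1n\sum_{i=1}^k(f_{\beta_0^{(i)}}-f_{\bar\beta_*})(\Xs)+o_p(1)$ with $f_\beta(x)=1_d^\TT a^{-1}(x,\alpha_0)b(x,\beta)$. Because the pre-change diffusion is ergodic with invariant law $\mu_{(\alpha_0,\beta_1^*)}$ and, conditionally on $X_{t_{n_*}^n}$ and by the Markov property, the post-change diffusion is ergodic with invariant law $\mu_{(\alpha_0,\beta_2^*)}$, the ergodic theorem for functionals of the sampled diffusion gives $\frac{1}{nh_n}\sum_{i=1}^{n_*}D_i\pto t^*G(\beta_1^*)$ and $\frac{1}{nh_n}\sum_{i=1}^{n}D_i\pto t^*G(\beta_1^*)+(1-t^*)G(\beta_2^*)$, whence
\[
\frac{1}{nh_n}\Bigl(\sum_{i=1}^{n_*}D_i-\frac{n_*}{n}\sum_{i=1}^{n}D_i\Bigr)\pto t^*(1-t^*)\bigl(G(\beta_1^*)-G(\beta_2^*)\bigr)=:c\neq0
\]
by {\textbf{[B4]}}. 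Combining the three parts, $\frac{1}{\sqrt{dnh_n}}(\sum_{i=1}^{n_*}\hat\xi_i-\frac{n_*}{n}\sum_{i=1}^{n}\hat\xi_i)=\sqrt{nh_n/d}\,(c+o_p(1))+O_p(1)$; since $c\neq0$ the factor $c+o_p(1)$ stays bounded away from $0$ with probability tending to $1$ and $nh_n\lto\infty$, so this quantity diverges in absolute value and $P(T_{1,n}^\beta>w_1(\epsilon))\lto1$. The step I expect to be the main obstacle is the law of large numbers: one needs a version that tolerates the change-point non-stationarity — in particular ergodicity of the post-change segment started from the random state $X_{t_{n_*}^n}$, via the Markov property and {\textbf{[A2]}} — and that survives plugging in $\hat\alpha_n,\hat\beta_n$, and one must verify the plug-in error is genuinely $o_p(nh_n)$ rather than of the same order $O_p(nh_n)$ as the signal, which is exactly where the $o_p(1)$ consistency in {\textbf{[B3']}} together with the uniform polynomial growth is used.
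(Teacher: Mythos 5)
Your proposal is correct and follows essentially the same route as the paper: bound $T_{1,n}^\beta$ below by the CUSUM increment at $k=[nt^*]$, prove segment-wise laws of large numbers $\frac{1}{[nt^*]h_n}\sum_{i\le[nt^*]}\hat\xi_i\pto G(\beta_1^*)$ and $\frac{1}{(n-[nt^*])h_n}\sum_{i>[nt^*]}\hat\xi_i\pto G(\beta_2^*)$, and conclude divergence at rate $\sqrt{nh_n}$ from $t^*(1-t^*)(G(\beta_1^*)-G(\beta_2^*))\neq0$ under \textbf{[B4]}. The only difference is bookkeeping: you split $\DeX$ explicitly into its martingale and drift-integral parts, whereas the paper Taylor-expands $\kappa_{i-1}(\hat\alpha_n)$ and $b_{i-1}(\hat\beta_n)$ and invokes Kessler's conditional moment bounds together with Lemma 9 of Genon-Catalot and Jacod (1993) to get the same conclusion.
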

\begin{thm}\label{th6}
Assume {\textbf{[A1]}}-{\textbf{[A4]}}, {\textbf{[A7]}}, {\textbf{[A9]}}, 
{\textbf{[B3]}} and {\textbf{[B5]}}. 
Then, under $H_1^\beta$,
\begin{align*}
P\left(T_{2,n}^\beta>w_q(\epsilon)\right)\lto1.
\end{align*}
\end{thm}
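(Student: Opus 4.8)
The plan is to prove that $T_{2,n}^\beta\pto\infty$ under $H_1^\beta$, which immediately gives $P(T_{2,n}^\beta>w_q(\epsilon))\to1$ since $w_q(\epsilon)$ is a fixed constant. First I would keep only the term $k=[nt^*]$ in the maximum, so that
\begin{align*}
T_{2,n}^\beta\ \ge\ \frac{1}{\sqrt{nh_n}}
\left\|\mathcal I_n^{-1/2}\Bigl(\sum_{i=1}^{[nt^*]}\hat\zeta_i
-\frac{[nt^*]}{n}\sum_{i=1}^n\hat\zeta_i\Bigr)\right\|,
\end{align*}
and then decompose $\hat\zeta_i$. Put $\beta_i^\sharp=\beta_1^*$ for $i\le[nt^*]$ and $\beta_i^\sharp=\beta_2^*$ for $i>[nt^*]$, so that $b(\cdot,\beta_i^\sharp)$ is the true drift on $[t_{i-1}^n,t_i^n]$. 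The Euler-type expansion of \eqref{eq2.1},
\begin{align*}
\Xt-\Xs-h_nb(\Xs,\hat\beta_n)
=h_n\bigl(b(\Xs,\beta_i^\sharp)-b(\Xs,\hat\beta_n)\bigr)
+\int_{t_{i-1}^n}^{t_i^n}a(X_s,\alpha_0)\,\dd W_s+r_i,\qquad
r_i=\int_{t_{i-1}^n}^{t_i^n}\!\bigl(b(X_s,\beta_i^\sharp)-b(\Xs,\beta_i^\sharp)\bigr)\,\dd s,
\end{align*}
gives $\hat\zeta_i=h_n\hat g_i+\hat m_i+\hat\rho_i$, where $\hat g_i=\partial_\beta b(\Xs,\hat\beta_n)^\TT A^{-1}(\Xs,\hat\alpha_n)\bigl(b(\Xs,\beta_i^\sharp)-b(\Xs,\hat\beta_n)\bigr)$, $\hat m_i=\partial_\beta b(\Xs,\hat\beta_n)^\TT A^{-1}(\Xs,\hat\alpha_n)\int_{t_{i-1}^n}^{t_i^n}a(X_s,\alpha_0)\,\dd W_s$ and $\hat\rho_i=\partial_\beta b(\Xs,\hat\beta_n)^\TT A^{-1}(\Xs,\hat\alpha_n)r_i$. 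The strategy is that the $\hat g_i$-part produces a signal of exact order $\sqrt{nh_n}$, whereas the $\hat m_i$- and $\hat\rho_i$-parts are $O_p(1)$ uniformly in $k$ and $\mathcal I_n^{-1/2}$ is bounded below.

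For the signal, I would first replace $(\hat\alpha_n,\hat\beta_n)$ by $(\alpha_0,\bar\beta_*)$ in $\hat g_i$. Using [A2], [A4] and a Taylor expansion in the parameters, together with [B3] — which gives $\sqrt{nh_n}|\hat\beta_n-\bar\beta_*|=O_p(1)$ and $\sqrt{nh_n}|\hat\alpha_n-\alpha_0|=\sqrt{h_n}\,O_p(1)=o_p(1)$ — the induced error satisfies $\frac{h_n}{\sqrt{nh_n}}\sum_{i=1}^k(\hat g_i-g_i^*)=\sqrt{nh_n}\,O_p(|\hat\alpha_n-\alpha_0|+|\hat\beta_n-\bar\beta_*|)=O_p(1)$ uniformly in $k$, where $g_i^*=\partial_\beta b(\Xs,\bar\beta_*)^\TT A^{-1}(\Xs,\alpha_0)\bigl(b(\Xs,\beta_i^\sharp)-b(\Xs,\bar\beta_*)\bigr)$. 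Next I would apply the ergodic theorem separately on the pre-change block $[0,[nt^*]h_n]$, on which $X$ solves \eqref{eq2.1} with parameter $(\alpha_0,\beta_1^*)$, and on the post-change block $([nt^*]h_n,nh_n]$, on which it solves \eqref{eq2.1} with parameter $(\alpha_0,\beta_2^*)$ — both blocks of length tending to $\infty$ — together with the standard discretization estimate, to obtain
\begin{align*}
\frac{h_n}{\sqrt{nh_n}}\Bigl(\sum_{i=1}^{[nt^*]}g_i^*-\frac{[nt^*]}{n}\sum_{i=1}^ng_i^*\Bigr)
=\sqrt{nh_n}\,\Bigl(t^*(1-t^*)\bigl(H(\beta_1^*)-H(\beta_2^*)\bigr)+o_p(1)\Bigr),
\end{align*}
whose norm is of exact order $\sqrt{nh_n}\to\infty$ by [B5].

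It then remains to dominate the noise. After replacing $(\hat\alpha_n,\hat\beta_n)$ by $(\alpha_0,\bar\beta_*)$ (the induced error being $O_p(1)$ uniformly in $k$, as above), $\hat m_i$ becomes a genuine martingale increment with conditional second moment $O_p(h_n)$; hence $k\mapsto\sum_{i=1}^k\hat m_i$ has quadratic variation $O_p(nh_n)$, and Doob's maximal inequality yields $\frac1{\sqrt{nh_n}}\max_{1\le k\le n}\|\sum_{i=1}^k\hat m_i\|=O_p(1)$ — this is exactly the tightness step already carried out in the proof of Theorem~\ref{th03}. For $\hat\rho_i$, since the $r_i$ are of order $h_n^{3/2}$ and conditionally centered up to $O(h_n^2)$, the remainder estimates from the proof of Theorem~\ref{th03}, where [A9] is used, give $\frac1{\sqrt{nh_n}}\max_{1\le k\le n}\|\sum_{i=1}^k\hat\rho_i\|=o_p(1)$ (under $H_1^\beta$ it would even suffice to bound this by $o_p(\sqrt{nh_n})$). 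Finally, $\mathcal I_n$ is bounded in probability by [A2]--[A4] and invertible for $n$ large (it converges in probability to a positive definite matrix, a convex combination of invariant Fisher informations), so there is $c>0$ with $\|\mathcal I_n^{-1/2}v\|\ge c\|v\|$ for all $v$ with probability tending to $1$. Collecting the three parts in the lower bound gives $T_{2,n}^\beta\ge\sqrt{nh_n}\bigl(c_0+o_p(1)\bigr)$ with $c_0:=c\,|t^*(1-t^*)|\,\|H(\beta_1^*)-H(\beta_2^*)\|>0$ by [B5], hence $T_{2,n}^\beta\pto\infty$, which proves the claim.

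The step I expect to be the main obstacle is the ergodic averaging on the post-change block. By the Markov property, $(X_{[nt^*]h_n+t})_{t\ge0}$ is a diffusion with parameter $(\alpha_0,\beta_2^*)$ run from the random state $X_{[nt^*]h_n}$ for a time $(n-[nt^*])h_n\to\infty$, so one must apply the ergodic theorem conditionally on the path up to $[nt^*]h_n$ and then control, uniformly and with the correct moments, the integer-part and discretization errors, in order to conclude $\frac1{(n-[nt^*])h_n}\sum_{i=[nt^*]+1}^nh_nf(\Xs)\pto\int f(x)\,\dd\mu_{(\alpha_0,\beta_2^*)}(x)$ and thereby the $o_p(1)$ in the signal display above. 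A secondary technical point is verifying that the remainder and conditional-covariance estimates from the proof of Theorem~\ref{th03} carry over to the present situation, where the summands additionally carry the non-vanishing factor $b(\Xs,\beta_i^\sharp)-b(\Xs,\hat\beta_n)$ and the sample path contains a change point.
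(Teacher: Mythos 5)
Your overall architecture coincides with the paper's: restrict the maximum to $k=[nt^*]$, show that the block averages $\frac{1}{[nt^*]h_n}\sum_{i\le[nt^*]}\hat\zeta_i$ and $\frac{1}{(n-[nt^*])h_n}\sum_{i>[nt^*]}\hat\zeta_i$ converge to $H(\beta_1^*)$ and $H(\beta_2^*)$ respectively, and let \textbf{[B5]} together with $\sqrt{nh_n}\to\infty$ force divergence; the paper identifies the signal through the conditional moments of $\DeX$ and Lemma 9 of Genon-Catalot and Jacod (1993) rather than through your explicit Euler decomposition, but that is only a reorganization. Your treatment of the signal term $h_n\hat g_i$ and of $\mathcal I_n^{-1/2}$ is sound. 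The genuine gap is in the noise term $\hat m_i=\partial_\beta b(\Xs,\hat\beta_n)^\TT A^{-1}(\Xs,\hat\alpha_n)\int_{t_{i-1}^n}^{t_i^n}a(X_s,\alpha_0)\,\dd W_s$, where you claim that replacing $(\hat\alpha_n,\hat\beta_n)$ by $(\alpha_0,\bar\beta_*)$ induces an error that is $O_p(1)$ after the $(nh_n)^{-1/2}$ normalization, ``as above.'' The ``as above'' computation does not apply: for $\hat g_i$ the stochastic factor is $O_p(1)$ per summand, whereas here it is the increment $\int a\,\dd W_s$, of size $h_n^{1/2}$. A first-order bound gives $\sum_{i\le k}|\hat m_i-m_i^*|\le\bigl(|\hat\beta_n-\bar\beta_*|+|\hat\alpha_n-\alpha_0|\bigr)\cdot O_p(n h_n^{1/2})=O_p\bigl((nh_n)^{-1/2}\,n h_n^{1/2}\bigr)=O_p(\sqrt n)$, and $\sqrt n/(nh_n)=(nh_n^2)^{-1/2}$, which diverges whenever $nh_n^2\to0$ (the regime of interest, e.g.\ $h_n=n^{-2/3}$). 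So this crude bound does not even give $o_p(nh_n)$, i.e.\ the replacement error can swamp the signal. Nor can you keep $\hat\beta_n$ inside and apply Doob: $\hat\beta_n$ depends on the whole sample, so $\sum_{i\le k}\hat m_i$ is not a martingale.

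This is exactly the point where \textbf{[A9]} is needed, and your proposal spends \textbf{[A9]} in the wrong place (the discretization remainder $\hat\rho_i$ satisfies $\sum_i|\hat\rho_i|=O_p(nh_n^{3/2})=o_p(nh_n)$ with no extra smoothness at all). The paper's fix is to Taylor-expand $\partial_{\beta^\ell}b_{i-1}(\hat\beta_n)$ to order $m$ about $\bar\beta_*$: for $1\le j\le m-1$ the coefficient $\partial_\beta^j\partial_{\beta^\ell}b_{i-1}(\bar\beta_*)$ is adapted and parameter-free, so $\frac{1}{[nt^*]}\sum_i\partial_\beta^j\partial_{\beta^\ell}b_{i-1}(\bar\beta_*)(A_{i-1}^{-1}(\alpha_0))^{\ell_1,\ell_2}\frac{(\DeX)^{\ell_2}}{h_n}$ converges by Lemma 9 of Genon-Catalot and Jacod and is then multiplied by $(\hat\beta_n-\bar\beta_*)^{\otimes j}=o_p(1)$, while the $m$-th order integral remainder, bounded without any martingale cancellation, carries the compensating factor $(nh_n)^{-m/2}$ against a sum of size $O_p(h_n^{-1/2})$ per average, and vanishes because $nh_n^{(m+1)/m}\ge nh_n^{m/(m-1)}\lto\infty$. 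Your single-step replacement is the $m=1$ version of this, which would require $nh_n^2\lto\infty$; to close the argument you must incorporate the higher-order expansion (or an equivalent device) at the step you currently dismiss in one clause.
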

\begin{cor}\label{th7}
Assume {\textbf{[A1]}}-{\textbf{[A4]}}, {\textbf{[A7]}}, {\textbf{[A9']}}, 
{\textbf{[B3']}} and {\textbf{[B5]}}. 
Then, under $H_1^\beta$,
\begin{align*}
P\left(T_{2,n}^\beta>w_q(\epsilon)\right)\lto1.
\end{align*}
\end{cor}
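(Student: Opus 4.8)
The plan is to imitate the proof of Theorem~\ref{th6}, replacing the role of the general smoothness assumption \textbf{[A9]} by the exact polynomial structure of $b$ in $\beta$ afforded by \textbf{[A9']}; this is precisely what lets the rate condition \textbf{[B3]} on $\hat\beta_n$ be weakened to the mere consistency \textbf{[B3']}. Since $T_{2,n}^\beta$ is a maximum over $1\le k\le n$, it suffices to bound it below by its value at the true change time $k=[nt^*]$ and to show that this tends to $\infty$ in probability; as $w_q(\epsilon)$ is a fixed constant, $P(T_{2,n}^\beta>w_q(\epsilon))\to1$ then follows at once.

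First I would use \eqref{eq2.1} and \textbf{[A7]} to write, for each $i$,
\begin{align*}
\Xt-\Xs-h_nb(\Xs,\hat\beta_n)
=h_n\bigl(b(\Xs,\beta_0^{(i)})-b(\Xs,\hat\beta_n)\bigr)
+\int_{t_{i-1}^n}^{t_i^n}a(X_s,\alpha_0)\,\dd W_s+r_i,
\end{align*}
where $\beta_0^{(i)}=\beta_1^*$ for $i\le[nt^*]$ and $\beta_0^{(i)}=\beta_2^*$ for $i>[nt^*]$ (the change occurs at the grid point $t_{[nt^*]}^n$), and $\sum_{i=1}^n\|r_i\|=o_p(\sqrt{nh_n})$ by \textbf{[A1]}, \textbf{[A2]} and $nh_n^2\to0$. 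Substituting into $\hat\zeta_i$ splits the normalized CUSUM $\frac{1}{\sqrt{nh_n}}\bigl(\sum_{i=1}^k\hat\zeta_i-\frac kn\sum_{i=1}^n\hat\zeta_i\bigr)$ into a \emph{martingale part} and a \emph{bias part}. The martingale part is treated exactly as under $H_0^\beta$ in Theorem~\ref{th03} and Corollary~\ref{th3}: it is built from martingale increments with quadratic characteristic of order $nh_n$, so after division by $\sqrt{nh_n}$ and CUSUM-centring it is $O_p(1)$ uniformly in $k$ (the dependence on $\hat\alpha_n,\hat\beta_n$ being disposed of as explained below).

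For the bias part, set $\Psi_i(\alpha,\beta;\beta'):=\partial_\beta b(\Xs,\beta)^\TT A^{-1}(\Xs,\alpha)\bigl(b(\Xs,\beta')-b(\Xs,\beta)\bigr)$; then at $k=[nt^*]$ it equals, modulo $o_p(\sqrt{nh_n})$,
\begin{align*}
\sqrt{nh_n}\,\frac{[nt^*]}{n}\left(\frac1{[nt^*]}\sum_{i=1}^{[nt^*]}\Psi_i(\hat\alpha_n,\hat\beta_n;\beta_1^*)
-\frac1n\sum_{i=1}^n\Psi_i(\hat\alpha_n,\hat\beta_n;\beta_0^{(i)})\right).
\end{align*}
Applying a uniform-in-parameter ergodic theorem separately on the two regimes (dynamics $(\alpha_0,\beta_1^*)$ for $i\le[nt^*]$, dynamics $(\alpha_0,\beta_2^*)$ for $i>[nt^*]$), together with the consistency $\hat\alpha_n\pto\alpha_0$, $\hat\beta_n\pto\bar\beta_*$ from \textbf{[B3']} and the continuity in $(\alpha,\beta)$ of the limiting integrals, the bracket converges in probability to $(1-t^*)\bigl(H(\beta_1^*)-H(\beta_2^*)\bigr)$ while $[nt^*]/n\to t^*$, so the bias part is $\sqrt{nh_n}\bigl(t^*(1-t^*)(H(\beta_1^*)-H(\beta_2^*))+o_p(1)\bigr)$. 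The same reasoning gives $\mathcal I_n\pto\bar{\mathcal I}:=t^*\!\int\partial_\beta b^\TT A^{-1}\partial_\beta b\,\dd\mu_{(\alpha_0,\beta_1^*)}+(1-t^*)\!\int\partial_\beta b^\TT A^{-1}\partial_\beta b\,\dd\mu_{(\alpha_0,\beta_2^*)}$ (integrands at $(\cdot,\bar\beta_*)$, $(\cdot,\alpha_0)$), which, as for $\mathcal I$, is positive definite by \textbf{[A3]}. Hence
\begin{align*}
T_{2,n}^\beta\ \ge\ \frac{1}{\sqrt{nh_n}}\left\|\mathcal I_n^{-1/2}\left(\sum_{i=1}^{[nt^*]}\hat\zeta_i-\frac{[nt^*]}{n}\sum_{i=1}^n\hat\zeta_i\right)\right\|
=\sqrt{nh_n}\,\bigl\|t^*(1-t^*)\bar{\mathcal I}^{-1/2}\bigl(H(\beta_1^*)-H(\beta_2^*)\bigr)\bigr\|+o_p(\sqrt{nh_n})\ \pto\ \infty
\end{align*}
by \textbf{[B5]} and $\sqrt{nh_n}\to\infty$, which proves the corollary.

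The point at which \textbf{[A9']} is used, and the reason \textbf{[B3']} suffices, is that since $b(x,\cdot)$ is a polynomial in $\beta$ of degree at most $M$, the expansions of $b(\Xs,\hat\beta_n)$ and $\partial_\beta b(\Xs,\hat\beta_n)$ about $\bar\beta_*$ are finite and exact, with no Taylor remainder. Consequently every correction generated by replacing $\hat\beta_n$ by $\bar\beta_*$ (and $\hat\alpha_n$ by $\alpha_0$, via its $\sqrt n$-consistency) is a finite sum of terms of the form $(\hat\beta_n-\bar\beta_*)^{\otimes j}$ or $(\hat\alpha_n-\alpha_0)$ times a factor that is $O_p(1)$ uniformly in $k$ --- these $O_p(1)$ bounds coming from the uniform-in-parameter moment and maximal inequalities over compact neighbourhoods of $(\alpha_0,\bar\beta_*)$ already developed for Theorems~\ref{th4}--\ref{th6} --- so each correction is $o_p(1)$, and consistency of $\hat\beta_n$ replaces the $\sqrt{nh_n}$-rate that the general hypothesis \textbf{[A9]} forces in Theorem~\ref{th6}. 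I expect the main obstacle to be exactly this technical package: establishing the two ergodic-average limits under the change-point (two-regime) dynamics, and the uniform maximal inequalities needed to strip off the estimator corrections. Once these are in hand the rest is bookkeeping, the only genuinely new ingredient compared with Theorem~\ref{th6} being the exact polynomial expansion that removes the need for a rate on $\hat\beta_n$.
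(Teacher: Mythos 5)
Your proposal is correct and follows essentially the same route as the paper: the paper also lower-bounds the maximum at $k=[nt^*]$, establishes the two-regime limits $\frac{1}{[nt^*]h_n}\sum_{i\le[nt^*]}\hat\zeta_i\pto H(\beta_1^*)$ and $\frac{1}{(n-[nt^*])h_n}\sum_{i>[nt^*]}\hat\zeta_i\pto H(\beta_2^*)$ via conditional-moment convergence (Lemma 9 of Genon-Catalot and Jacod), and observes that under \textbf{[A9']} the $M$-th order Taylor remainder of $b$ in $\beta$ vanishes identically, so that the rate $\sqrt{nh_n}(\hat\beta_n-\bar\beta_*)=O_p(1)$ needed for the remainder term \eqref{rr4} in Theorem \ref{th6} can be replaced by mere consistency from \textbf{[B3']}. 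Your explicit martingale/bias splitting and the identification of the limit $\bar{\mathcal I}$ of $\mathcal I_n$ under the two-regime dynamics are just a slightly more detailed packaging of the same argument.
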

\begin{rmk}
Theorems \ref{th5}, \ref{th6} and Corollary \ref{th7} indicate that the tests 
$T_{1,n}^\beta$ and $T_{2,n}^\beta$ are consistent under 
some regularity conditions. 
Theorem \ref{th6} and Corollary \ref{th7} indicate that the test $T_{2,n}^\beta$
is consistent when the drift coefficient $b$ satisfies 
\textbf{[A9]} and \textbf{[A9']}, respectively. 
In particular,
for the drift coefficient $b$ satisfying \textbf{[A9]},
it is necessary to assume $\sqrt{nh_n}(\hat\beta_n-\beta_*)=O_p(1)$ 
in \textbf{[B3]}. 
When the drift coefficient $b$ satisfies \textbf{[A9']}, it is enough to assume 
$\hat\beta_n-\beta_*=o_p(1)$ in \textbf{[B3']}.
For the maximum likelihood type estimators of the misspecified diffusion processes,
see Uchida and Yoshida (2011). 
\end{rmk}
\begin{rmk}
In Theorems \ref{th5}, \ref{th6} and Corollary \ref{th7}, 
it is not necessary to assume $nh_n^2\lto0$, 
but it is required to construct 
estimators $\hat\alpha_n, \hat\beta_n$ 
that satisfies $\sqrt n(\hat\alpha_n-\alpha_0)=O_p(1)$ and
$\sqrt{nh_n}(\hat\beta_n-\beta_*)=O_p(1)$ in {\textbf{[B3]}},
or 
estimator $\hat\alpha_n$ 
that satisfies $\sqrt n(\hat\alpha_n-\alpha_0)=O_p(1)$ in {\textbf{[B3']}}.
\end{rmk}

\section{Examples and simulation results}
In this section, we evaluate the asymptotic performance of the test statistics 
$T_n^\alpha$, $T_{1,n}^\beta$ and $T_{2,n}^\beta$ through a simulation study.
In this study, we consider the $1$-dimensional Ornstein-Uhlenbeck process (OU process):
\begin{align}\label{OU}
\dd X_t=-\beta(X_t-\gamma)\dd t+\alpha\dd W_t,\quad X_0=1.
\qquad(\alpha, \beta>0, \gamma\in\mathbb R)
\end{align}
The data $\{\Xt\}_{i=0}^n$ are obtained discretely with sampling interval $h_n=n^{-2/3}$,
so that $nh_n=n^{1/3}\lto\infty$. 

Song and Lee (2009), Lee (2011) and Song (2020) 
conduct the simulations for the test statistic of the diffusion coefficient. 
On the other hand, we also perform the simulations for the test statistic of the drift coefficient 
as well as the diffusion coefficient.
In addition, Negri and Nishiyama (2017) perform the simulations for the test statistic of 
the diffusion and drift parameters at the same time. 
In our case, however, we first test the diffusion parameter, 
and if we determine that there is no change in the diffusion parameter, 
then we test the drift parameter.
They are the adaptive tests for the changes of both the drift and the diffusion parameters.
As far as we know, this is the first numerical simulations of the adaptive tests for the change point problem
of an ergodic diffusion process.
For details of the adaptive tests based on the Wald type test statistics and the Rao type ones
and the likelihood ratio type ones for ergodic diffusion processes, 
see Kitagawa and Uchida (2014).

Under the above setting with
 $a(x,\alpha)=\alpha$ and
$b(x,\boldsymbol\beta)=-\beta(x-\gamma)$ for $\boldsymbol\beta=(\beta,\gamma)$, 
we have
\begin{align*}
\hat\eta_i&=\frac{(\Xt-\Xs)^2}{h_n\hat\alpha_n^2},\\
T_n^{\alpha}&=\frac{1}{\sqrt{2n}}\max_{1\le k\le n}
\left|\sum_{i=1}^k\hat\eta_i-\frac{k}{n}\sum_{i=1}^n\hat\eta_i\right|,\\
\hat\xi_i&=\frac{\Xt-\Xs+h_n\hat\beta_n(\Xs-\hat\gamma_n)}{\hat\alpha_n},\\
T_{1,n}^\beta&=\frac{1}{\sqrt{nh_n}}\max_{1\le k\le n}
\left|\sum_{i=1}^k\hat\xi_i-\frac{k}{n}\sum_{i=1}^n\hat\xi_i\right|,\\
\hat\zeta_i
&=
\left(\begin{array}{@{\,}c@{\,}}
	-(\Xs-\hat\gamma_n) \\
	\hat\beta_n
\end{array}\right)
\frac{\Xt-\Xs+h_n\hat\beta_{n}(\Xs-\hat\gamma_n)}{\hat\alpha_n^2},\\
\mathcal I_n
&=
\frac{1}{n\hat\alpha_n^2}\sum_{i=1}^n
\left(\begin{array}{@{\,}cc@{\,}}
	(\Xs-\hat\gamma_n)^2 & b(\Xs,\hat{\boldsymbol\beta}_n) \\
	b(\Xs,\hat{\boldsymbol\beta}_n)  & \hat\beta_n^2
\end{array}\right),\\
T_{2,n}^\beta&=\frac{1}{\sqrt{nh_n}}\max_{1\le k\le n}
\left\|\mathcal I_n^{-1/2}
\left(\sum_{i=1}^k\hat\zeta_i-\frac{k}{n}\sum_{i=1}^n\hat\zeta_i\right)
\right\|,
\end{align*}
where $(\hat\alpha_n,\hat{\boldsymbol\beta}_n)$ is an estimator satisfying the assumption 
{\textbf{[A6]}} or {\textbf{[A8]}}, and we construct the estimator from 
\begin{align*}
U_n^{(1)}(\alpha)=-\frac12\sum_{i=1}^n\frac{(\Xt-\Xs)^2}{h_n\alpha^2}-n\log\alpha,\quad
U_n^{(2)}(\boldsymbol\beta|\alpha)
=-\frac12\sum_{i=1}^n\frac{(\Xt-\Xs+h_n\beta(\Xs-\gamma))^2}{h_n\alpha^2},
\end{align*}
\begin{align*}
\hat\alpha_n=\arg\sup_{\alpha}U_n^{(1)}(\alpha),\quad
\hat{\boldsymbol\beta}_n=\arg\sup_{\boldsymbol\beta}U_n^{(2)}(\boldsymbol\beta|\hat\alpha_n),
\end{align*}
see, for example, Kessler (1995, 1997), Uchida and Yoshida (2012) and Kaino and Uchida (2018).
We have, from Theorems \ref{th1}, \ref{th2} and Corollary \ref{th3},
\begin{align}\label{simBB}
T_n^\alpha\dto\sup_{0\le s\le 1}|{ \boldsymbol B_1^0(s)}|,\quad
T_{1,n}^\beta\dto\sup_{0\le s\le 1}|{ \boldsymbol B_1^0(s)}|,\quad
T_{2,n}^\beta\dto\sup_{0\le s\le 1}\| { \boldsymbol B_2^0(s)} \|.
\end{align}

In order to evaluate the empirical sizes and powers of $T_n^\alpha,T_{1,n}^\beta$ and $T_{2,n}^\beta$, 
we consider three cases as follows:
\begin{description}
\item[Case 1] Neither parameter changes.
\item[Case 2] The diffusion parameter $\alpha$ changes. 
\item[Case 3] The diffusion parameter $\alpha$ does not change, 
but the drift parameter $(\beta,\gamma)$ changes.
\end{description}
For each of the above cases, we change parameters at $nh_n/2$.
The empirical sizes and powers are calculated as the rejection number of the null 
hypothesis out of 1,000 repetitions.
All simulations are conducted at significance level $10\%$ and 
the corresponding critical values are  
obtained from the following: 
the standard Brownian bridge is generated by taking $10^4$ points 
on the interval $[0,1]$, 
and the maximum value of its norm is recorded. 
This is repeated $10^4$ times. As a result, we have
\begin{align*}
P\left(\sup_{0\le s\le 1}| { \boldsymbol B_1^0(s)} |> 1.223\right)=0.1,\quad
P\left(\sup_{0\le s\le 1}\| { \boldsymbol B_2^0(s)} \|> 1.444\right)=0.1,
\end{align*}
i.e., the corresponding critical values are 1.223 and 1.444 for 
the 
1-dimensional and 2-dimensional standard Brownian bridges, respectively.

In Case 1, we confirm that \eqref{simBB} holds with various parameters. 
Next, we consider the empirical sizes of $T_n^\alpha$, 
$T_{1,n}^\beta$ and $T_{2,n}^\beta$.
The empirical sizes are calculated for the OU process with 
$(\alpha,\beta,\gamma)\in\{(1,1,1), (0.5,1,0)$,\\ $(1.5,1.5,-1), (2,3,0.5)\}$.

In (i) of Case 2, we confirm that  $T_n^\alpha$ is consistent, that is, 
the power of $T_n^\alpha$ goes to $1$ under the following situation:
\begin{enumerate}
\renewcommand{\labelenumi}{(\roman{enumi})}
\item 
the only parameter $\alpha$ changes from $\alpha_1^*=1$ to $\alpha_2^*\in\{1.01, 1.05, 1.1, 1.5\}$.
\end{enumerate} 
Lee (2011) also  performed this simulation, 
but we consider the cases where the parameter change is small 
and the number of data is large.

In Case 3, after confirming that $T_n^\alpha\dto \sup_{0\le s\le 1}|\boldsymbol B_1^0(s)|$, 
it is verified whether $T_{1,n}^\beta$ and $T_{2,n}^\beta$ 
are consistent.
We consider the empirical size of $T_n^\alpha$ and 
the powers of $T_{1,n}^\beta$ and $T_{2,n}^\beta$ under the following situations:
\renewcommand{\labelenumi}{(\roman{enumi})}
\begin{enumerate}
\item 
the parameter $\beta$ only changes from $\beta_1^*=1$ to $\beta_2^*\in\{1.1, 1.5, 3, 5\}$;
\item 
the parameter $\gamma$ only changes from $\gamma_1^*=1$ to $\gamma_2^*\in\{0.9, 0.5, 0, -1\}$;
\item 
the parameter $(\beta,\gamma)$ changes from $(\beta_1^*,\gamma_1^*)=(1,1)$ to 
$(\beta_2^*,\gamma_2^*)\in\{(3,0.5), (3,0), (5,0.5), (5,0)\}$.
\end{enumerate}
$\beta$ and $\gamma$ in OU process \eqref{OU} 
represent the speed of reversion and the mean value as $nh_n \rightarrow \infty$, respectively. 
In (i) and (ii), we verify the consistency of the test 
when each parameter changes, and 
in (iii), we also verify the consistency of the test 
when both $\beta$ and $\gamma$ change.

First, let us consider the consistency of the test based on $T_n^\alpha$. 
The invariant measure of the solution in \eqref{OU} is 
$\mu_\theta\sim N(\gamma,\frac{\alpha^2}{2\beta})$, $\theta=(\alpha,\beta,\gamma)$, 
and we have
\begin{align*}
F(\alpha)=\int_{\mathbb R}\frac{\alpha^2}{\bar\alpha_*^2}\dd\mu_\alpha(x)
=\frac{\alpha^2}{\bar\alpha_*^2}.
\end{align*}
Therefore, noting that $\alpha>0$, 
$F(\alpha_1^*)\neq F(\alpha_2^*)$ for $\alpha_1^*\neq\alpha_2^*$,
and the test $T_n^\alpha$ has consistency according to Theorem \ref{th4}.

Next, we investigate  the consistency of the test $T_{1,n}^\beta$. 
For the drift parameter $\boldsymbol\beta_1^*=(\beta_1^*,\gamma_1^*)$, we have
\begin{align*}
G(\boldsymbol\beta_1^*)
=\int_\mathbb R
\frac{1}{\alpha_0}(b(x,\boldsymbol\beta_1^*)-b(x,\bar{\boldsymbol\beta}_*))\dd\mu_{\theta_1^*}(x)
&=\int_\mathbb R
\frac{1}{\alpha_0}\left(
-(\beta_1^*-\bar\beta_*)x+(\beta_1^*\gamma_1^*-\bar\beta_*\bar\gamma_*)
\right)
\dd\mu_{\theta_1^*}(x)\\
&=\frac{1}{\alpha_0}(
-\gamma_1^*(\beta_1^*-\bar\beta_*)+(\beta_1^*\gamma_1^*-\bar\beta_*\bar\gamma_*)
)\\
&=\frac{\bar\beta_*}{\alpha_0}
(\gamma_1^*-\bar\gamma_*),
\end{align*}
where $\theta_1^*=(\alpha_0,\beta_1^*,\gamma_1^*)$.

We construct the estimators $\hat\alpha_n$ { and} $\hat{\boldsymbol\beta}_n$ 
in the same way as above.
Since
\begin{align*}
\frac{1}{nh_n}
\left(U_n^{(2)}(\boldsymbol\beta|\hat\alpha_n)
-U_n^{(2)}(\boldsymbol\beta_0|\hat\alpha_n)\right)
\pto
-\frac12(t^*Q(\boldsymbol\beta,\boldsymbol\beta_1^*)
+(1-t^*)Q(\boldsymbol\beta,\boldsymbol\beta_2^*))
=:\bar Q(\boldsymbol\beta),
\end{align*}
where
\begin{align*}
Q(\boldsymbol\beta,\boldsymbol\beta_1^*)
=\int_{\mathbb R}\frac{(b(x,\boldsymbol\beta)-b(x,\boldsymbol\beta_1^*))^2}
{\alpha_0^2}\dd\mu_{\theta_1^*}(x)
=\int_{\mathbb R}\frac{((-\beta+\beta_1^*)x+(\beta\gamma-\beta_1^*\gamma_1^*))^2}
{\alpha_0^2}\dd\mu_{\theta_1^*}(x),
\end{align*}
if we set
$\bar{\boldsymbol\beta}_*=\arg\sup_{\boldsymbol\beta} \bar Q(\boldsymbol\beta)$,
then 
$\hat{\boldsymbol\beta}_n\pto\bar{\boldsymbol\beta}_*$ under $H_1^\beta$.

Since 
$\partial_{\boldsymbol\beta}\bar Q(\bar{\boldsymbol\beta}_*)=0$,
\begin{align*}
\partial_\gamma Q(\boldsymbol\beta,\boldsymbol\beta_1^*)
=\frac{2\beta}{\alpha_0^2}
\int_{\mathbb R}((-\beta+\beta_1^*)x+(\beta\gamma-\beta_1^*\gamma_1^*))
\dd\mu_{\theta_1^*}
=\frac{2\beta^2}{\alpha_0^2}(\gamma-\gamma_1^*),
\end{align*}
\begin{align*}
\partial_\gamma\bar Q(\boldsymbol\beta)
=t^*\partial_\gamma Q(\boldsymbol\beta,\boldsymbol\beta_1^*)
+(1-t^*)\partial_\gamma Q(\boldsymbol\beta,\boldsymbol\beta_2^*)
&=t^*\frac{2\beta^2}{\alpha_0^2}(\gamma-\gamma_1^*)
+(1-t^*)\frac{2\beta^2}{\alpha_0^2}(\gamma-\gamma_2^*)\\
&=\frac{2\beta^2}{\alpha_0^2}(\gamma-(t^*\gamma_1^*+(1-t^*)\gamma_2^*)),
\end{align*}
\begin{align*}
\partial_\beta \bar Q(\boldsymbol\beta,\boldsymbol\beta_1^*)
=\frac{2}{\alpha_0^2}\int_{\mathbb R}(-x+\gamma)
((-\beta+\beta_1^*)x+(\beta\gamma-\beta_1^*\gamma_1^*))\dd\mu_{\theta_1^*}(x)
=\frac{2}{\alpha_0^2}
\left(
\beta\left(\frac{\alpha_0^2}{2\beta_1^*}+(\gamma-\gamma_1^*)^2\right)
-\frac{\alpha_0^2}{2}
\right),
\end{align*}
and
\begin{align*}
\partial_\beta\bar Q(\boldsymbol\beta)
&=t^*\partial_\beta Q(\boldsymbol\beta,\boldsymbol\beta_1^*)
+(1-t^*)\partial_\beta Q(\boldsymbol\beta,\boldsymbol\beta_2^*)\\
&=\frac{2}{\alpha_0^2}
\left(
\beta\left(
t^*\left(\frac{\alpha_0^2}{2\beta_1^*}+(\gamma-\gamma_1^*)^2\right)
+(1-t^*)\left(\frac{\alpha_0^2}{2\beta_2^*}+(\gamma-\gamma_2^*)^2\right)
\right)
-\frac{\alpha_0^2}{2}
\right),
\end{align*}
we have
\begin{align*}
\bar\gamma_*&=t^*\gamma_1^*+(1-t^*)\gamma_2^*,\\
\bar\beta_*
&=\frac{\frac{\alpha_0^2}{2}}
{t^*\left(\frac{\alpha_0^2}{2\beta_1^*}+(\gamma-\gamma_1^*)^2\right)
+(1-t^*)\left(\frac{\alpha_0^2}{2\beta_2^*}+(\gamma-\gamma_2^*)^2\right)
}\\
&=\frac{1}
{t^*\frac{1}{\beta_1^*}+(1-t^*)\frac{1}{\beta_2^*}
+\frac{2}{\alpha_0^2}t^*(1-t^*)(\gamma_1^*-\gamma_2^*)^2}>0.
\end{align*}
If $\beta_1^*\neq\beta_2^*$ and $\gamma_1^*=\gamma_2^*$, then
\begin{align*}
G(\boldsymbol\beta_1^*)-G(\boldsymbol\beta_2^*)
=\frac{\bar\beta_*}{\alpha_0}((\gamma_1^*-\bar\gamma_*)-(\gamma_2^*-\bar\gamma_*))
=\frac{\bar\beta_*}{\alpha_0}(\gamma_1^*-\gamma_2^*)=0,
\end{align*}
and {\textbf{[B4]}} does not hold.
If $\gamma_1^*\neq\gamma_2^*$, then
\begin{align*}
G(\boldsymbol\beta_1^*)-G(\boldsymbol\beta_2^*)
=\frac{\bar\beta_*}{\alpha_0}(\gamma_1^*-\gamma_2^*)\neq0,
\end{align*}
and the test $T_{1,n}^\beta$ has consistency according to Theorem \ref{th5}.

Furthermore, we study  the consistency of the test $T_{2, n}^\beta$. 
One has
\begin{align*}
H(\boldsymbol\beta_1^*)
&=\frac{1}{\alpha_0^2}\int_{\mathbb R}
\left(\begin{array}{@{\,}c@{\,}}
	-(x-\bar\gamma_*) \\
	\bar\beta_*
\end{array}\right)
((-\beta_1^*x+\beta_1^*\gamma_1^*)-(-\bar\beta_*x+\bar\beta_*\bar\gamma_*))
\dd\mu_{\theta_1^*}(x)\\
&=\frac{1}{\alpha_0^2}\int_{\mathbb R}
\left(\begin{array}{@{\,}c@{\,}}
	-(x-\bar\gamma_*) \\
	\bar\beta_*
\end{array}\right)
(-(\beta_1^*-\bar\beta_*)x+(\beta_1^*\gamma_1^*-\bar\beta_*\bar\gamma_*))
\dd\mu_{\theta_1^*}(x)\\
&=\frac{1}{\alpha_0^2}\int_{\mathbb R}
\left(\begin{array}{@{\,}c@{\,}}
	(\beta_1^*-\bar\beta_*)x^2
	-(\beta_1^*\gamma_1^*-\bar\beta_*\bar\gamma_*+\bar\gamma_*(\beta_1^*-\bar\beta_*))x 
	+\bar\gamma_*(\beta_1^*\gamma_1^*-\bar\beta_*\bar\gamma_*)\\
	\bar\beta_*(-(\beta_1^*-\bar\beta_*)x+(\beta_1^*\gamma_1^*-\bar\beta_*\bar\gamma_*))
\end{array}\right)
\dd\mu_{\theta_1^*}(x)\\
&=\frac{1}{\alpha_0^2}
\left(\begin{array}{@{\,}c@{\,}}
	(\beta_1^*-\bar\beta_*)\left((\gamma_1^*)^2+\frac{\alpha_0^2}{2\beta_1^*}\right)
	-(\beta_1^*\gamma_1^*-\bar\beta_*\bar\gamma_*+\bar\gamma_*(\beta_1^*-\bar\beta_*))
	\gamma_1^* 
	+\bar\gamma_*(\beta_1^*\gamma_1^*-\bar\beta_*\bar\gamma_*)\\
	\bar\beta_*(-(\beta_1^*-\bar\beta_*)\gamma_1^*
	+(\beta_1^*\gamma_1^*-\bar\beta_*\bar\gamma_*))
\end{array}\right)
\dd\mu_{\theta_1^*}(x)\\
&=\frac{1}{\alpha_0^2}
\left(\begin{array}{@{\,}c@{\,}}
	\frac{\alpha_0^2}{2}\left(1-\frac{\bar\beta_*}{\beta_1^*}\right)
-\bar\beta_*(\gamma_1^*-\bar\gamma_*)^2\\
	(\bar\beta_*)^2(\gamma_1^*-\bar\gamma_*)
\end{array}\right)
\end{align*}
and
\begin{align*}
H(\boldsymbol\beta_1^*)-H(\boldsymbol\beta_2^*)
&=\frac{\bar\beta_*}{\alpha_0^2}
\left(\begin{array}{@{\,}c@{\,}}
	-\frac{\alpha_0^2}{2}\left(\frac{1}{\beta_1^*}-\frac{1}{\beta_2^*}\right)
	-(\gamma_1^*-\gamma_2^*)(\gamma_1^*+\gamma_2^*-2\bar\gamma_*)\\
	\bar\beta_*(\gamma_1^*-\gamma_2^*)
\end{array}\right).
\end{align*}
If $\gamma_1^*\neq\gamma_2^*$, noting that $\bar\beta_*>0$, then
\begin{align*}
H(\beta_1^*,\beta_1^*\gamma_1^*)-H(\beta_2^*,\beta_2^*\gamma_2^*)\neq0.
\end{align*}
If $\beta_1^*\neq\beta_2^*$ and $\gamma_1^*=\gamma_2^*$, 
then
\begin{align*}
H(\beta_1^*,\beta_1^*\gamma_1^*)-H(\beta_2^*,\beta_2^*\gamma_2^*)
&=-\frac{\bar\beta_*}{2}\left(\frac{1}{\beta_1^*}-\frac{1}{\beta_2^*}\right)
\left(\begin{array}{@{\,}c@{\,}}
	1\\
	0
\end{array}\right)\neq0.
\end{align*}
Therefore, the test $T_{2,n}^\beta$ is consistent by Corollary \ref{th7}.

Figures $\ref{fig3}$-$\ref{fig8}$ show the sample paths
in Case 1, (i) of Case 2 and (i)-(iii) of Case 3, and 
Figures $\ref{fig9}$-$\ref{fig22}$ show the histograms 
of $T_n^\alpha, T_{1,n}^\beta$ and $T_{2,n}^\beta$ and 
the probability density function 
of $\sup_{0\le s\le 1}|\boldsymbol B_1^0(s)|$ and  
$\sup_{0\le s\le1}\|\boldsymbol B(s)\|$ obtained by the simulation in the above cases.
Figures $\ref{fig23}$-$\ref{fig35}$ show the empirical distribution function of 
$T_n^\alpha, T_{1,n}^\beta$ and $T_{2,n}^\beta$ and
the cumulative distribution function of 
$\sup_{0\le s\le 1}|\boldsymbol B_1^0(s)|$ and $\sup_{0\le s\le1}\|\boldsymbol B(s)\|$.

Table \ref{tab1} shows the empirical sizes of $T_n^\alpha, T_{1,n}^\beta$ and $T_{2,n}^\beta$ 
in Case 1 when $n=8\times 10^3, 1.25\times 10^5$ and $10^6$. 
According to Table \ref{tab1}, the empirical sizes of 
$T_n^\alpha, T_{1,n}^\beta$ and $T_{2,n}^\beta$ are around 0.1 
for each $n$ and 
$(\alpha,\beta,\gamma)$. 
Furthermore, from 
Figures $\ref{fig9}$-$\ref{fig11}$ and Figures $\ref{fig23}$-$\ref{fig25}$, 
we can see \eqref{simBB}. 

Table $\ref{tab2}$ shows the empirical powers of $T_n^\alpha$
in Case $2$. 
According to Table \ref{tab2}, 
if $n=10^6$, the changes as shown in Figure \ref{fig4} is detected.
We can confirm this from Figures $\ref{fig12}$ and $\ref{fig26}$.

Tables $\ref{tab4}$-$\ref{tab6}$ show the empirical sizes of $T_n^\alpha$ 
and empirical powers of $T_{1,n}^\beta$ and $T_{2,n}^\beta$ in Case 3.
First, in (i)-(iii) of Case 3, $T_n^\alpha$ are about 0.1, and we can confirm 
$T_n^\alpha\dto\sup_{0\le s\le 1}|\boldsymbol B_1^0(s)|$ from 
Figures $\ref{fig14}$, $\ref{fig17}$, $\ref{fig20}$, $\ref{fig27}$, $\ref{fig30}$ and $\ref{fig33}$.
It can be seen from Table \ref{tab4} that $T_{1,n}^\beta$ does not give good results
in (i) of Case 3.
As seen above, 
since $T_{1,n}^\beta$ does not satisfy the sufficient condition \textbf{[B4]} for having consistency
in (i) of Case 3, 
it can be guessed that $T_{1,n}^\beta$ may not have consistency in (i) of Case 3. 
In contrast to $T_{1,n}^\beta$, $T_{2,n}^\beta$ gives good results in (i) of Case 3. 
We can confirm { them} from Figures $\ref{fig15}$, $\ref{fig16}$, $\ref{fig28}$ and $\ref{fig29}$. 
Similarly, when $\gamma$ changes in (ii) of Case 3,
we can see $T_n^\alpha\dto\sup_{0\le s\le 1}|\boldsymbol B_1^0(s)|$
from Table \ref{tab5}, Figures $\ref{fig17}$ and $\ref{fig30}$, 
and the consistency of $T_{1,n}^\beta$ and $T_{2,n}^\beta$  
from Figures $\ref{fig18}$, $\ref{fig19}$, $\ref{fig31}$ and $\ref{fig32}$.
Moreover, in (iii) of Case 3,
we can confirm 
$T_n^\alpha\dto\sup_{0\le s\le 1}|\boldsymbol B_1^0(s)|$ from Table \ref{tab6},  Figures $\ref{fig20}$ and $\ref{fig33}$
and the consistency of $T_{1,n}^\beta$ and $T_{2,n}^\beta$ 
from Figures $\ref{fig21}$, $\ref{fig22}$, $\ref{fig34}$ and $\ref{fig35}$.

\begin{table}[h]
\begin{center}
\caption{Empirical sizes of { $T_n^\alpha$, $T_{1,n}^\beta$ and $T_{2,n}^\beta$} in Case 1}
\begin{tabular*}{.8\textwidth}{@{\extracolsep{\fill}}cccc|cccc}
	& & & & \multicolumn{4}{c}{$(\alpha,\beta,\gamma)$}   \rule[0mm]{0cm}{4mm}\\\hline
	$n$ & $nh_n$ & $h_n$ & & $(1,1,1)$ & $(0.5,1,0)$ & $(1.5,1.5,-1)$ & $(2,3,0.5)$  \rule[0mm]{0cm}{4mm}\\\hline\hline
	$8.0\times10^3$ & 20 & $2.5\times10^{-3}$ &$T_n^\alpha$ & 0.106 & 0.107 & 0.107 & 0.107	\rule[0mm]{0cm}{4mm}\\
 	&  &  &$T_{1,n}^\beta$& 0.095 & 0.078 & 0.091 & 0.097	\rule[0mm]{0cm}{4mm}\\
 	&  &  &$T_{2,n}^\beta$ &0.091 & 0.083 & 0.087 & 0.090	\rule[0mm]{0cm}{4mm}\\\hline
	$1.25\times10^5$ & 50 & $4.0\times10^{-4}$&$T_n^\alpha$ & 0.100 & 0.099 & 0.099 & 0.100	\rule[0mm]{0cm}{4mm}\\
 	&  &  &$T_{1,n}^\beta$& 0.093 & 0.102 & 0.109 & 0.115	\rule[0mm]{0cm}{4mm}\\
 	&  &  &$T_{2,n}^\beta$ &0.098 & 0.095 & 0.091 & 0.106	\rule[0mm]{0cm}{4mm}\\\hline
	$10^6$ & 100 & $10^{-4}$ &$T_n^\alpha$& 0.119 & 0.118 & 0.119 & 0.118 	\rule[0mm]{0cm}{4mm}\\
 	&  &  &$T_{1,n}^\beta$& 0.106 & 0.104 & 0.107 & 0.102	\rule[0mm]{0cm}{4mm}\\
 	&  &  &$T_{2,n}^\beta$& 0.097 & 0.096 & 0.095 & 0.095	\rule[0mm]{0cm}{4mm}
\end{tabular*}
\label{tab1}
\end{center}
\end{table}
\begin{table}[h]
\begin{center}
\caption{Empirical powers of $T_n^\alpha$ in (i) of Case 2}
\begin{tabular*}{.8\textwidth}{@{\extracolsep{\fill}}ccc|cccc}
	& & &  \multicolumn{4}{c}{$\alpha_1^*=1\lto\alpha_2^*$}   \rule[0mm]{0cm}{4mm}\\\hline
	$n$ & $nh_n$ & $h_n$ & 1.01 & 1.05 & 1.1 & 1.5 	\rule[0mm]{0cm}{4mm}\\\hline\hline
	$8.0\times10^3$ & 20 & $2.5\times10^{-3}$ & 0.144 & 0.864 & 1.000 & 1.000 	\rule[0mm]{0cm}{4mm}\\\hline
	$1.25\times10^5$ & 50 & $4.0\times10^{-4}$ & 0.741 & 1.000 & 1.000 & 1.000 	\rule[0mm]{0cm}{4mm}\\\hline
	$10^6$ & 100 & $10^{-4}$ & 1.000 & 1.000 & 1.000 &  1.000	\rule[0mm]{0cm}{4mm}
\end{tabular*}
\label{tab2}
\end{center}
\end{table}
\begin{table}[h]
\begin{center}
\caption{Empirical sizes of $T_n^\alpha$ and 
empirical powers of $T_{1,n}^\beta$ and $T_{2,n}^\beta$ in (i) of Case 3}
\begin{tabular*}{.8\textwidth}{@{\extracolsep{\fill}}cccc|cccc}
	& & &&  \multicolumn{4}{c}{$\beta_1^*=1\lto\beta_2^*$}	   \rule[0mm]{0cm}{4mm}\\\hline
	$n$ & $nh_n$ & $h_n$ && 1.1 & 1.5 & 3 & 5  	\rule[0mm]{0cm}{4mm}\\\hline\hline
	$8.0\times10^3$ & 20 & $2.5\times10^{-3}$ &$T_n^\alpha$& 0.106 & 0.107 & 0.102 & 0.102	\rule[0mm]{0cm}{4mm}\\
 	&  &  & $T_{1,n}^\beta$& 0.095 & 0.112 & 0.208 & 0.296	\rule[0mm]{0cm}{4mm}\\
	&  & & $T_{2,n}^\beta$& 0.100 & 0.152 & 0.532 & 0.918\rule[0mm]{0cm}{4mm}\\\hline
	$1.25\times10^5$ & 50 & $4.0\times10^{-4}$ &$T_n^\alpha$& 0.099 & 0.101 & 0.101 & 0.097	\rule[0mm]{0cm}{4mm}\\
 	&  &  &	$T_{1,n}^\beta$& 0.101 & 0.119 & 0.204 & 0.296\rule[0mm]{0cm}{4mm}\\
	&  & &$T_{2,n}^\beta$ & 0.119 & 0.247 & 0.906 & 0.999\rule[0mm]{0cm}{4mm}\\\hline
	$10^6$ & 100 & $10^{-4}$ &$T_n^\alpha$ & 0.119 & 0.118 & 0.118 & 0.116	\rule[0mm]{0cm}{4mm}\\
 	&  &  &	$T_{1,n}^\beta$& 0.108 & 0.129 & 0.220 & 0.302	\rule[0mm]{0cm}{4mm}\\
	& & & $T_{2,n}^\beta$&0.126 & 0.351 & 0.998 & 1.000 \rule[0mm]{0cm}{4mm}
\end{tabular*}
\label{tab4}
\end{center}
\end{table}
\begin{table}[h]
\begin{center}
\caption{Empirical sizes of $T_n^\alpha$ and 
empirical powers of $T_{1,n}^\beta$ and $T_{2,n}^\beta$ in (ii) of Case 3}
\begin{tabular*}{.8\textwidth}{@{\extracolsep{\fill}}cccc|cccc}
	& & &&  \multicolumn{4}{c}{$\gamma_1^*=1\lto\gamma_2^*$}   \rule[0mm]{0cm}{4mm}\\\hline
	$n$ & $nh_n$ & $h_n$ && 0.9 & 0.5 & 0 & $-1$  \rule[0mm]{0cm}{4mm}\\\hline\hline
	$8.0\times10^3$ & 20 & $2.5\times10^{-3}$ &$T_n^\alpha$& 0.106 & 0.106 & 0.104 & 0.102	\rule[0mm]{0cm}{4mm}\\
 	&  &  &	$T_{1,n}^\beta$& 0.103 & 0.170 & 0.287 & 0.210\rule[0mm]{0cm}{4mm}\\
	&&&$T_{2,n}^\beta$& 0.093 & 0.118 & 0.158 & 0.075\rule[0mm]{0cm}{4mm}\\\hline
	$1.25\times10^5$ & 50 & $4.0\times10^{-4}$ &$T_n^\alpha$&0.100 & 0.100 & 0.100 & 0.100	\rule[0mm]{0cm}{4mm}\\
 	&  &  & $T_{1,n}^\beta$& 0.108 & 0.378 & 0.852 & 0.990\rule[0mm]{0cm}{4mm}\\
	&&&$T_{2,n}^\beta$& 0.102 & 0.277 & 0.621 & 0.641\rule[0mm]{0cm}{4mm}\\\hline
	$10^6$ & 100 & $10^{-4}$ &$T_n^\alpha$ &0.119 & 0.119 & 0.119 & 0.119	\rule[0mm]{0cm}{4mm}\\
 	&  &  & $T_{1,n}^\beta$& 0.121 & 0.680 & 0.996 & 1.000\rule[0mm]{0cm}{4mm}\\
	&&&$T_{2,n}^\beta$& 0.115 & 0.574 & 0.980 & 1.000 \rule[0mm]{0cm}{4mm}
\end{tabular*}
\label{tab5}
\end{center}
\end{table}
\begin{table}[h]
\begin{center}
\caption{Empirical sizes of $T_n^\alpha$ and 
empirical powers of $T_{1,n}^\beta$ and $T_{2,n}^\beta$ in (iii) of Case 3}
\begin{tabular*}{.8\textwidth}{@{\extracolsep{\fill}}cccc|ccccc}
	& & &&  \multicolumn{4}{c}{$(\beta_1^*,\gamma_1^*)=(1,1)\lto(\beta_2^*,\gamma_2^*)$}   \rule[0mm]{0cm}{4mm}\\\hline
	$n$ & $nh_n$ & $h_n$ && $(3,0.5)$ & $(3,0)$ & $(5,0.5)$ & $(5,0)$  \rule[0mm]{0cm}{4mm}\\\hline\hline
	$8.0\times10^3$ & 20 & $2.5\times10^{-3}$ &$T_n^\alpha$& 0.102 & 0.102 & 0.100 & 0.101 \rule[0mm]{0cm}{4mm}\\
 	&  &  &	$T_{1,n}^\beta$& 0.458 & 0.758 & 0.585 & 0.860 \rule[0mm]{0cm}{4mm}\\
	&&&$T_{2,n}^\beta$& 0.676 & 0.735 & 0.961 & 0.962 \rule[0mm]{0cm}{4mm}\\\hline
	$1.25\times10^5$ & 50 & $4.0\times10^{-4}$ &$T_n^\alpha$&0.100 & 0.100 & 0.096 & 0.096 \rule[0mm]{0cm}{4mm}\\
 	&  &  & $T_{1,n}^\beta$& 0.720 & 0.992 & 0.808 & 0.996 \rule[0mm]{0cm}{4mm}\\
	&&&$T_{2,n}^\beta$& 0.989 & 1.000 & 0.999 & 1.000 \rule[0mm]{0cm}{4mm}\\\hline
	$10^6$ & 100 & $10^{-4}$ &$T_n^\alpha$ & 0.118 & 0.119 & 0.116 & 0.116 \rule[0mm]{0cm}{4mm}\\
 	&  &  &	$T_{1,n}^\beta$& 0.931 & 1.000 & 0.956 & 1.000 \rule[0mm]{0cm}{4mm}\\
	&&&$T_{2,n}^\beta$& 1.000 & 1.000 & 1.000 & 1.000 \rule[0mm]{0cm}{4mm}
\end{tabular*}
\label{tab6}
\end{center}
\end{table}
\begin{figure}[h]
\begin{center}
\caption{Sample paths of the 1-dimensional OU process \eqref{OU} with 
$(\alpha,\beta,\gamma)=(1,1,1), (0.5,1,0), (1.5,1.5,-1)$, and $(2,3,0.5)$,
{ in the order of upper left, upper right, lower left and lower right}  (Case 1).}
\includegraphics[height=10cm, width=15cm]{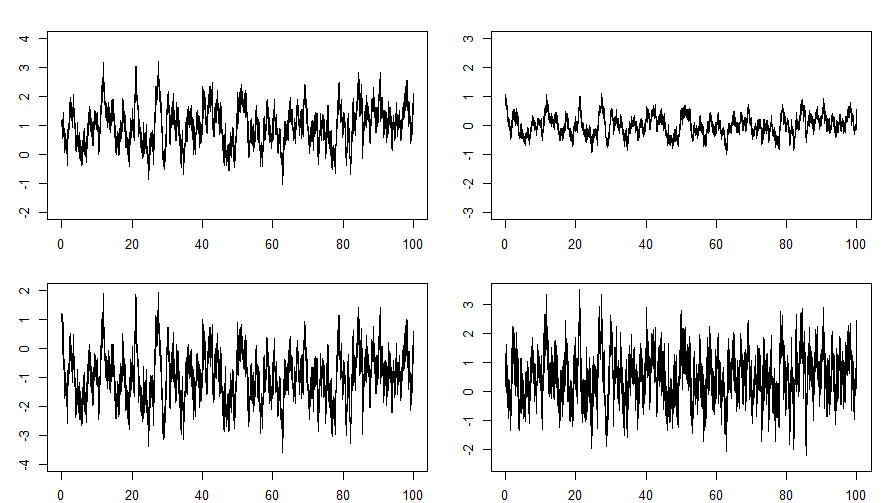}
\label{fig3}
\caption{
Sample paths of the 1-dimensional OU process \eqref{OU}
whose parameter changes
from  $\alpha=1$ to 
$1.01, 1.05, 1.1$ and $1.5$,
in the order of upper left, upper right, lower left and lower right} ((i) of Case 2).
\includegraphics[height=10cm, width=15cm]{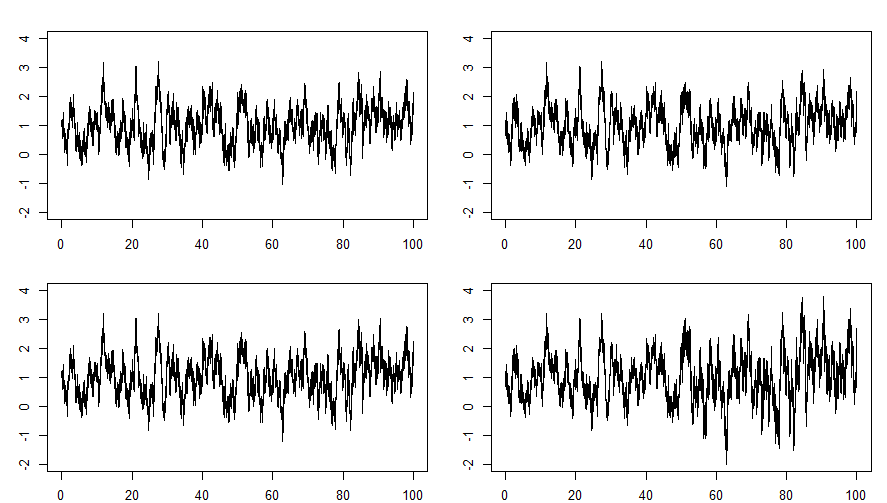}
\label{fig4}
\end{center}
\end{figure}
\begin{figure}[h]
\begin{center}
\caption{
Sample paths of the 1-dimensional OU process \eqref{OU}
where the parameter changes from  $\beta=1$ to 
$1.1, 1.5, 3$ and $5$,
whose parameter in the order of upper left, upper right, lower left and lower right
 ((i) of Case 3).}
\includegraphics[height=10cm, width=15cm]{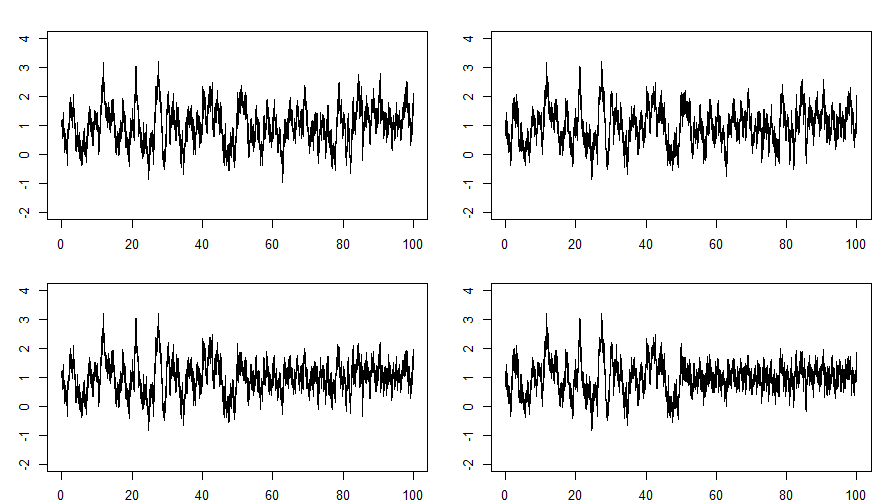}
\label{fig6}
\caption{
Sample paths of the 1-dimensional OU process \eqref{OU}
whose parameter changes from  $\gamma=1$ to 
$0.9, 0.5, 0$ and $-1$,
whose parameter in the order of upper left, upper right, lower left and lower right ((ii) of Case 3).}
\includegraphics[height=10cm, width=15cm]{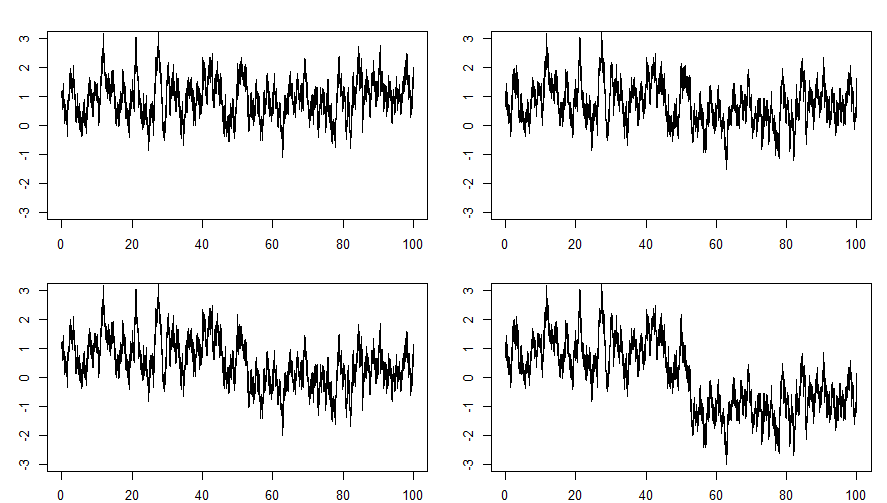}
\label{fig7}
\end{center}
\end{figure}
\begin{figure}[h]
\begin{center}
\caption{
Sample paths of the 1-dimensional OU process \eqref{OU}
whose parameter changes from  $(\beta,\gamma)=(1,1)$ to 
$(3,0.5), (3,0), (5,0.5)$ and $(5,0)$,
whose parameter in the order of upper left, upper right, lower left and lower right ((iii) of Case 3).}
\includegraphics[height=10cm, width=15cm]{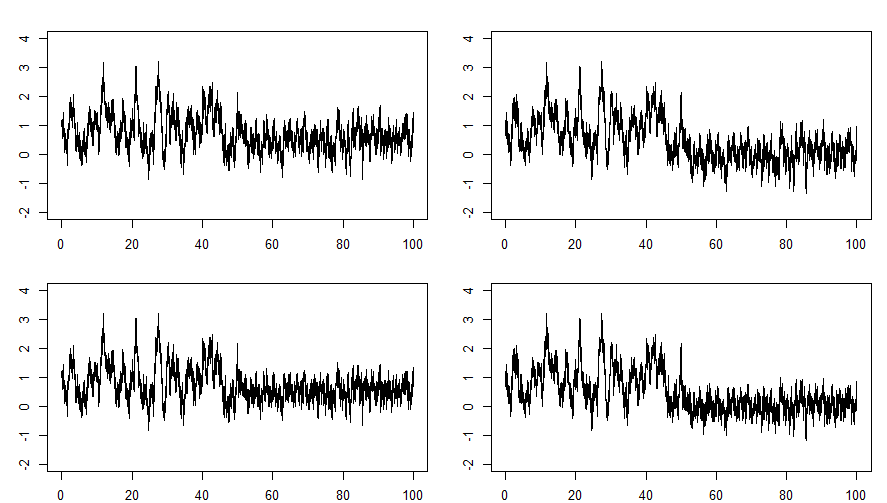}
\label{fig8}
\caption{
Histograms of $T_n^\alpha$ when $n=8\times10^3, 1.25\times10^5$ 
and $10^6$ in order from the first row 
and $(\alpha,\beta,\gamma)=(1,1,1), (0.5,1,0), (1.5,1.5,-1)$ and $(2,3,0.5)$ in oder from the first column (Case 1). 
The red line is the probability density function of $\sup_{0\le s\le 1}|\boldsymbol B_1^0(s)|$.
}
\includegraphics[height=10cm, width=15cm]{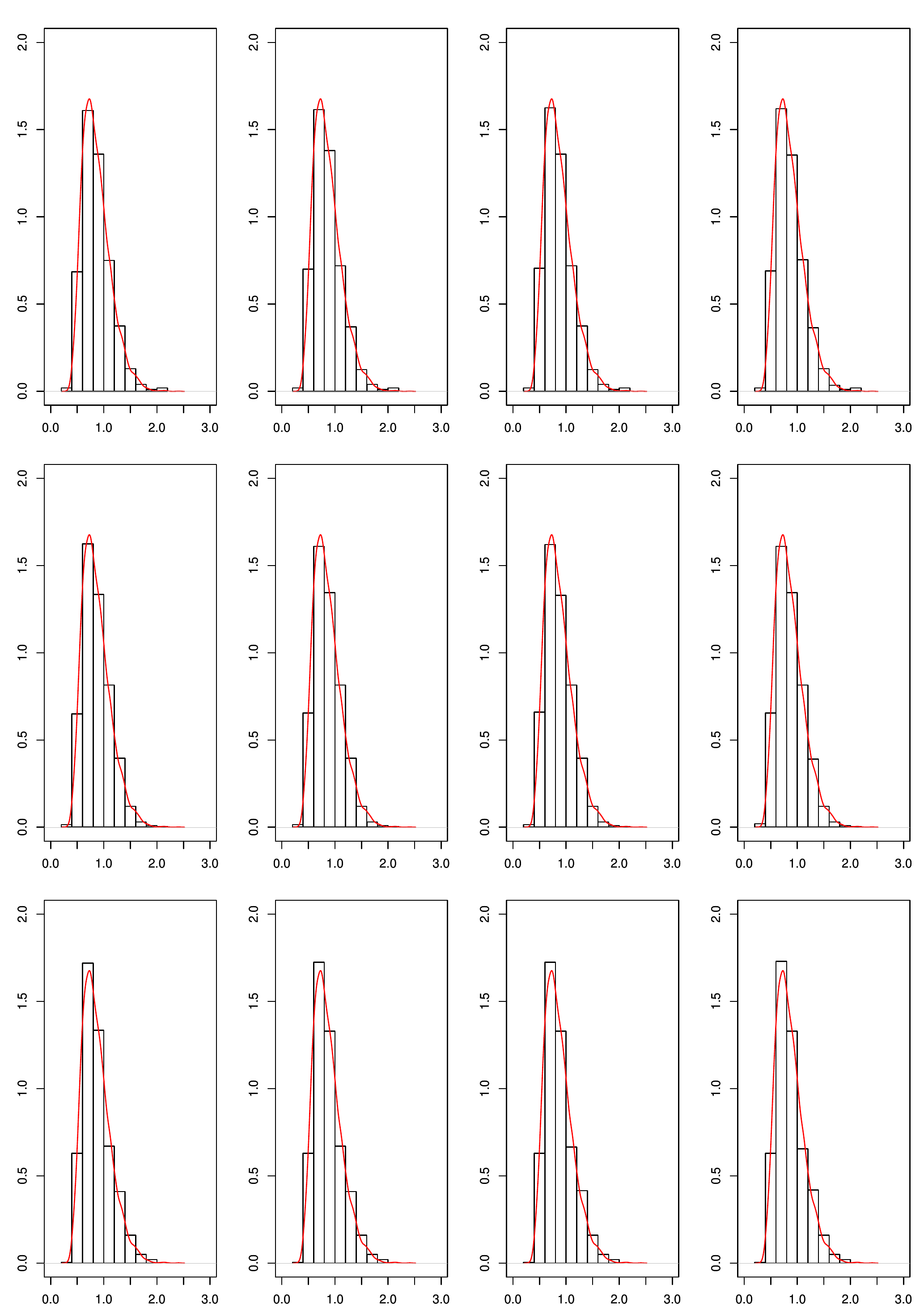}
\label{fig9}
\end{center}
\end{figure}
\begin{figure}[h]
\begin{center}
\caption{
Histograms of $T_{1,n}^\beta$ when { $n=8\times10^3, 1.25\times10^5$} and $10^6$ in order from the first row 
and $(\alpha,\beta,\gamma)=(1,1,1), (0.5,1,0), (1.5,1.5,-1)$ and $(2,3,0.5)$ in oder from the first column (Case 1). 
The red line is the probability density function of $\sup_{0\le s\le 1}|\boldsymbol B_1^0(s)|$.
}
\includegraphics[height=10cm, width=15cm]{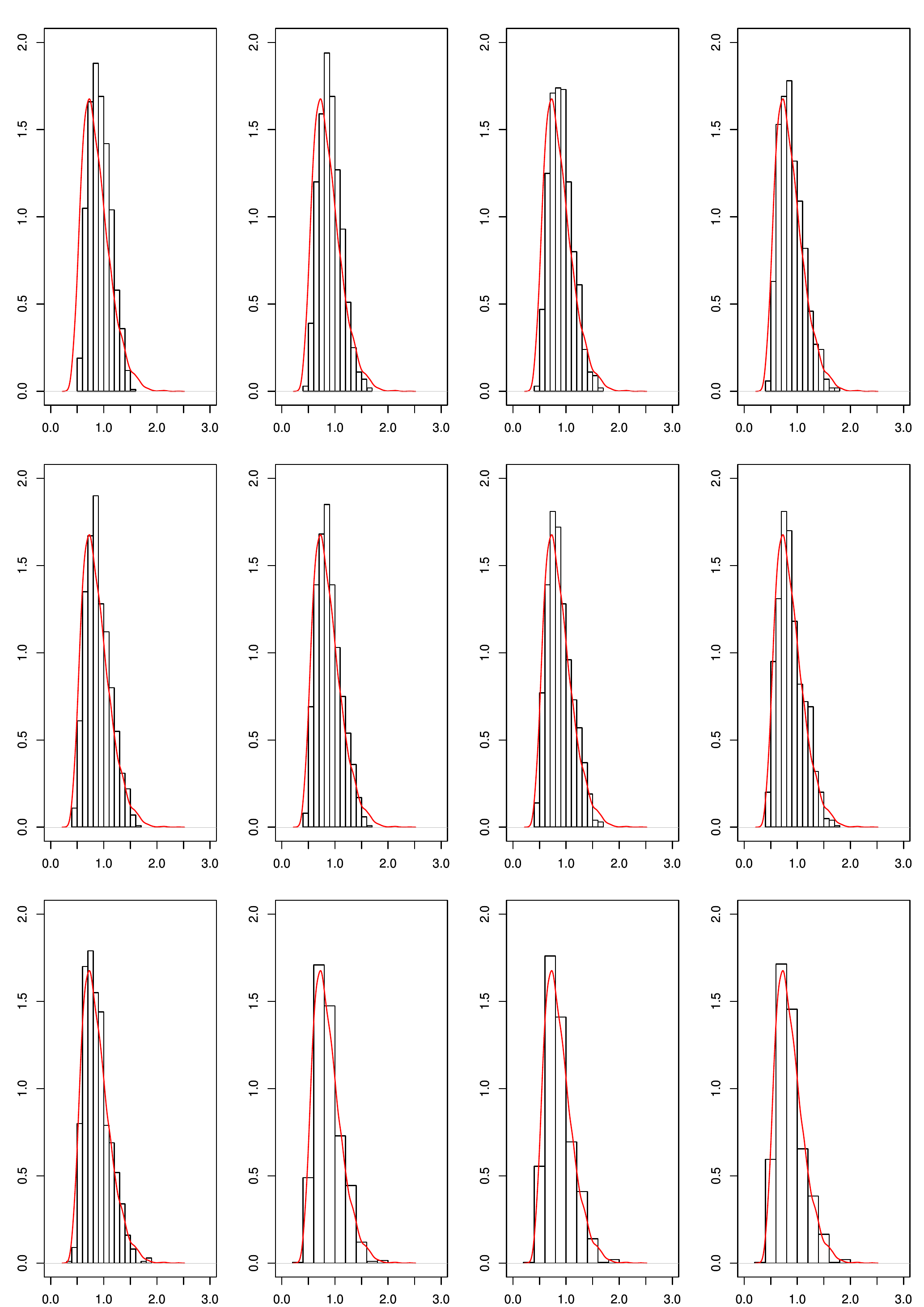}
\label{fig10}
\caption{
Histograms of $T_{2,n}^\beta$ when { $n=8\times10^3, 1.25\times10^5$} and $10^6$ in order from the first row 
and $(\alpha,\beta,\gamma)=(1,1,1), (0.5,1,0), (1.5,1.5,-1)$ and $(2,3,0.5)$ in oder from the first column (Case 1). 
The red line is the probability density function of $\sup_{0\le s\le 1}\|\boldsymbol B_2^0(s)\|$.
}
\includegraphics[height=10cm, width=15cm]{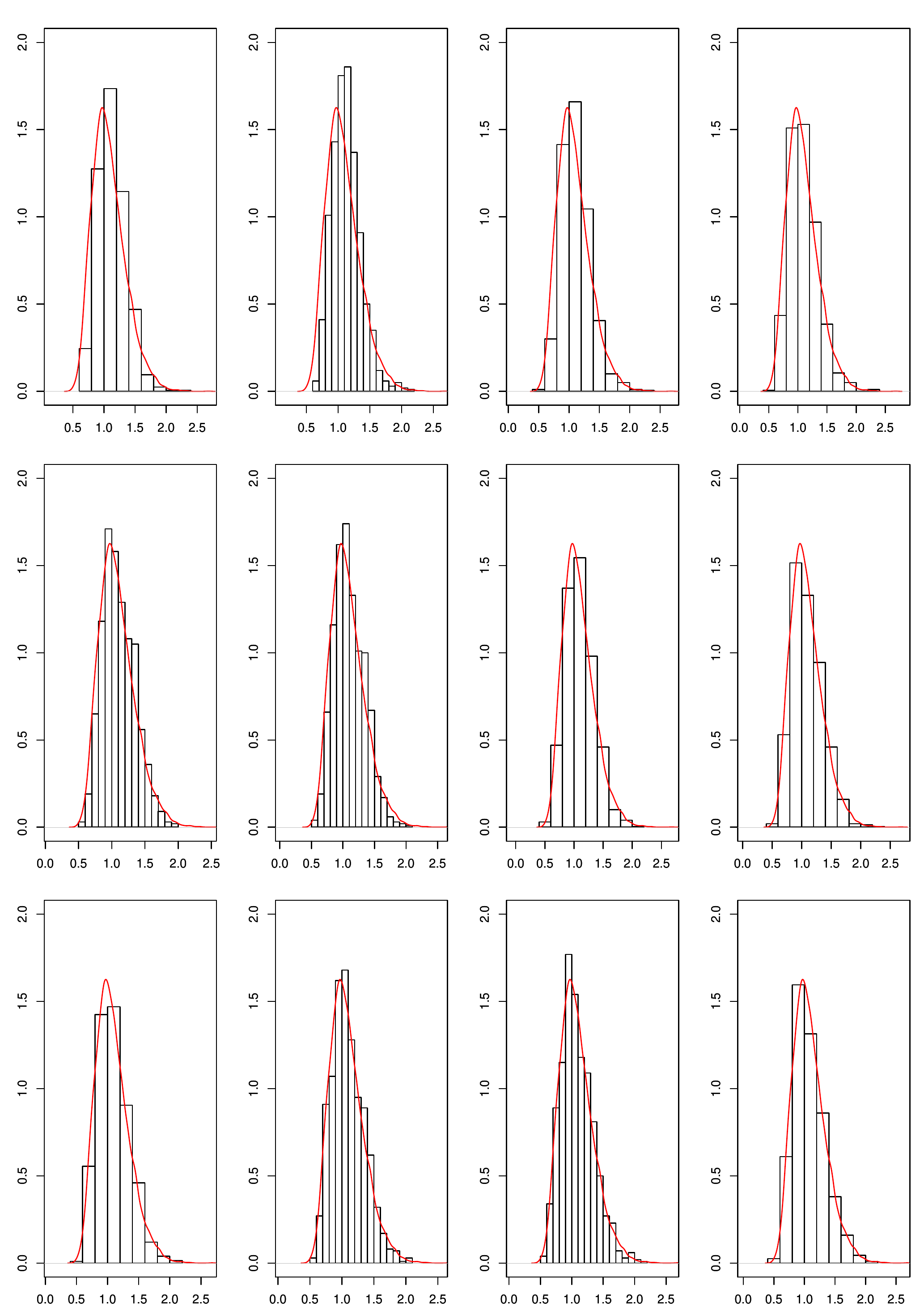}
\label{fig11}
\end{center}
\end{figure}
\begin{figure}[h]
\begin{center}
\caption{
Histograms of $T_n^\alpha$ when { $n=8\times10^3, 1.25\times10^5$} and $10^6$ in order from the first row 
and $\alpha$ changes from $1$ to $1.01, 1.05, 1.1$ and $1.5$ in oder from the first column 
((i) of Case 2). 
The red line is the probability density function of $\sup_{0\le s\le 1}|\boldsymbol B_1^0(s)|$.
}
\includegraphics[height=10cm, width=15cm]{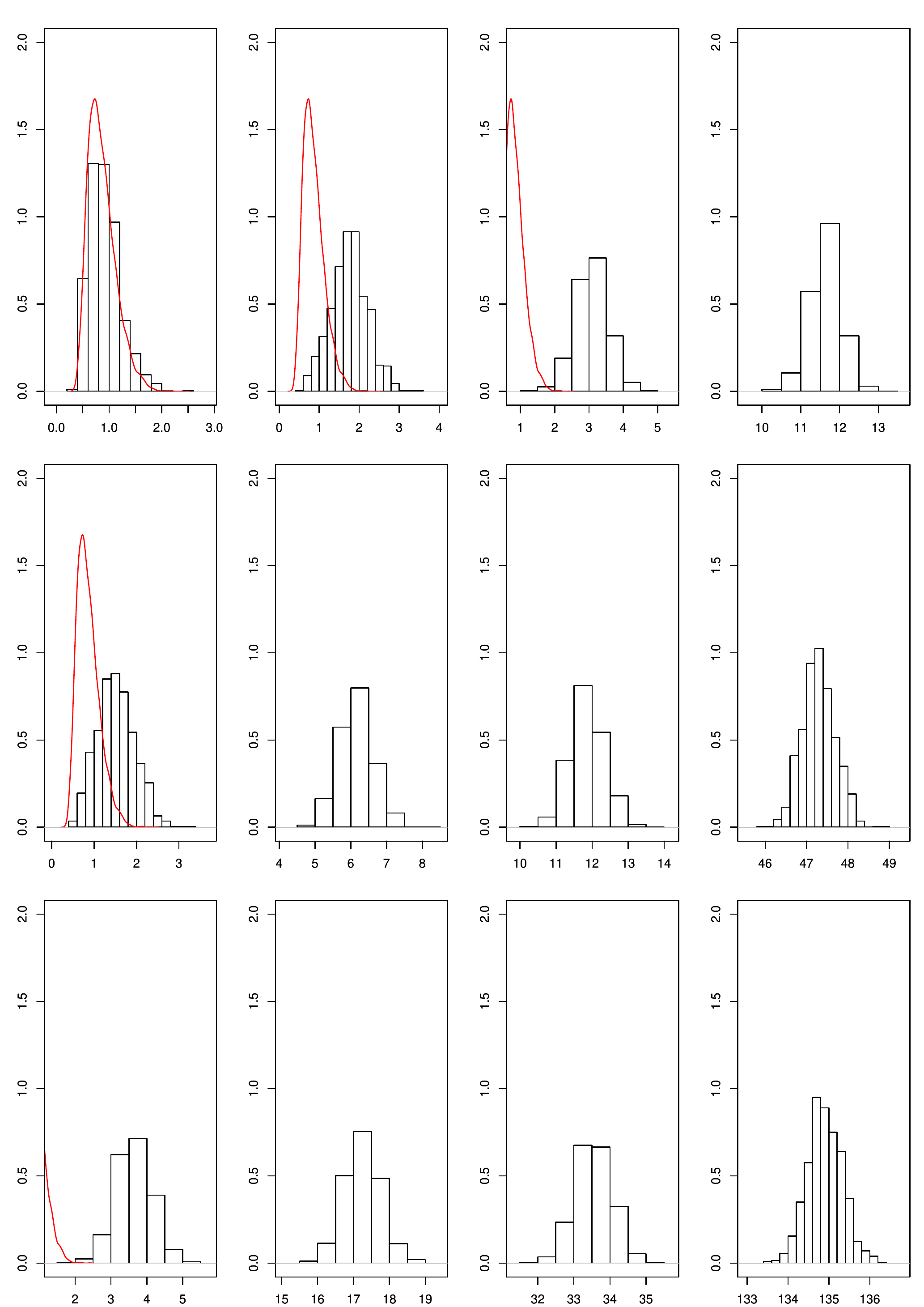}
\label{fig12}
\caption{
Histograms of $T_n^\alpha$ when { $n=8\times10^3, 1.25\times10^5$} and $10^6$ in order from the first row 
and $\beta$ changes from $1$ to $1.1, 1.5, 3$ and $5$ in oder from the first column ((i) of Case 3).
The red line is the probability density function of $\sup_{0\le s\le 1}|\boldsymbol B_1^0(s)|$.
}
\includegraphics[height=10cm, width=15cm]{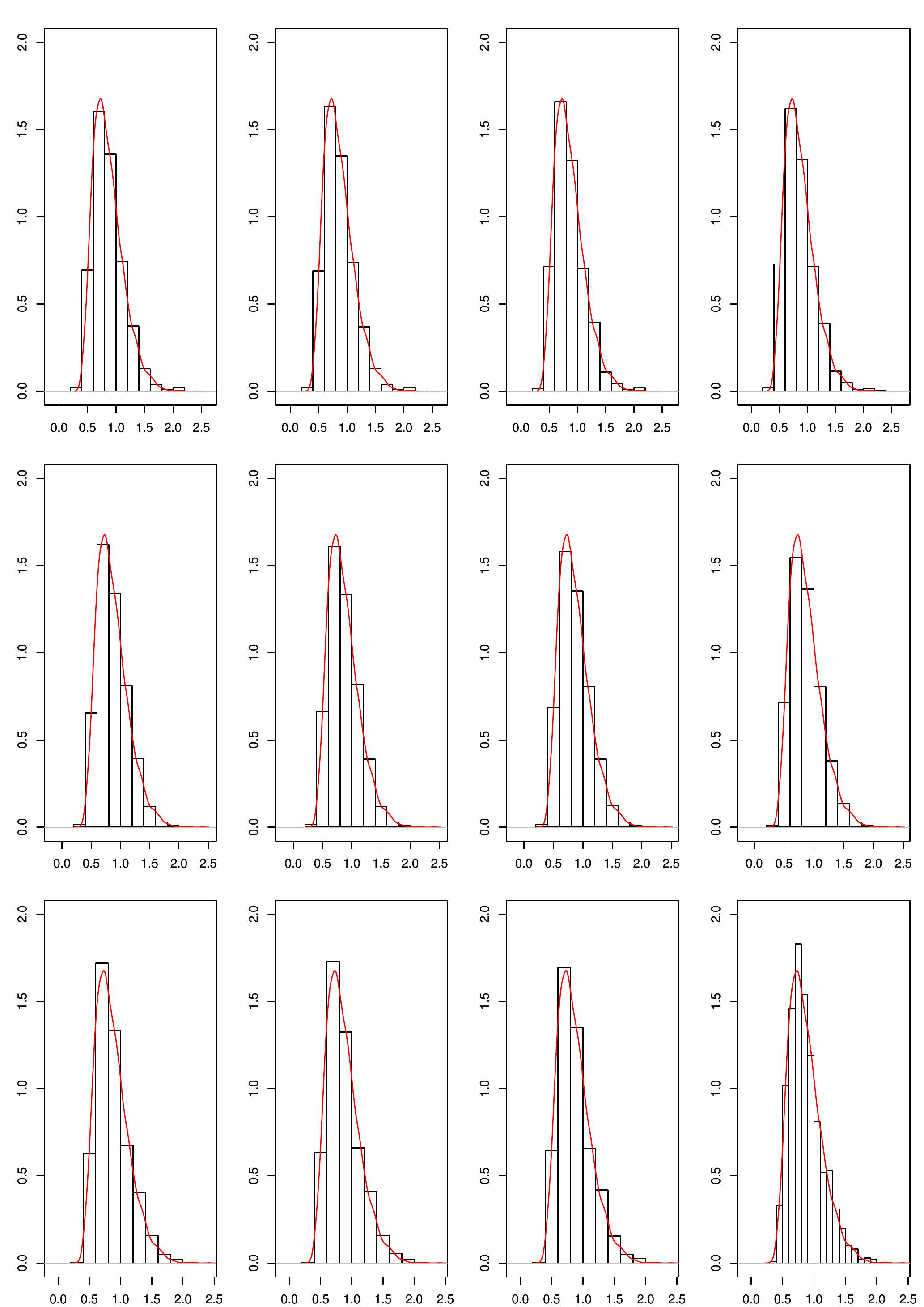}
\label{fig14}
\end{center}
\end{figure}
\begin{figure}[h]
\begin{center}
\caption{
Histograms of $T_{1,n}^\beta$ when { $n=8\times10^3, 1.25\times10^5$} and $10^6$ in order from the first row 
and $\beta$ changes from $1$ to $1.1, 1.5, 3$ and $5$ in oder from the first column ((ii) of Case 3). 
The red line is the probability density function of $\sup_{0\le s\le 1}|\boldsymbol B_1^0(s)|$.
}
\includegraphics[height=10cm, width=15cm]{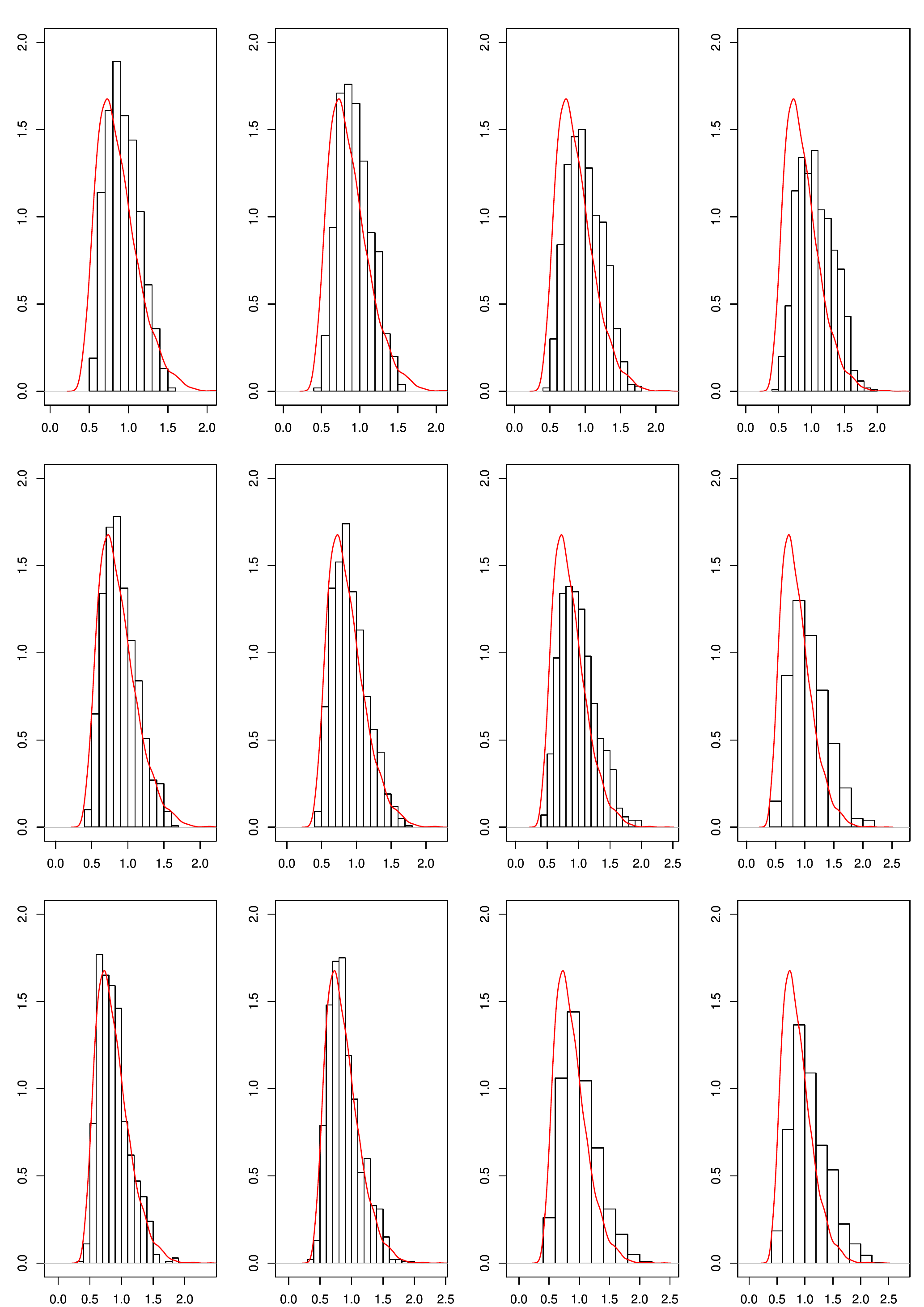}
\label{fig15}
\caption{
Histograms of $T_{2,n}^\beta$ when { $n=8\times10^3, 1.25\times10^5$} and $10^6$ in order from the first row 
and $\beta$ changes from $1$ to $1.1, 1.5, 3$ and $5$ in oder from the first column ((ii) of Case 3). 
The red line is the probability density function of $\sup_{0\le s\le 1}\|\boldsymbol B_2^0(s)\|$.
}
\includegraphics[height=10cm, width=15cm]{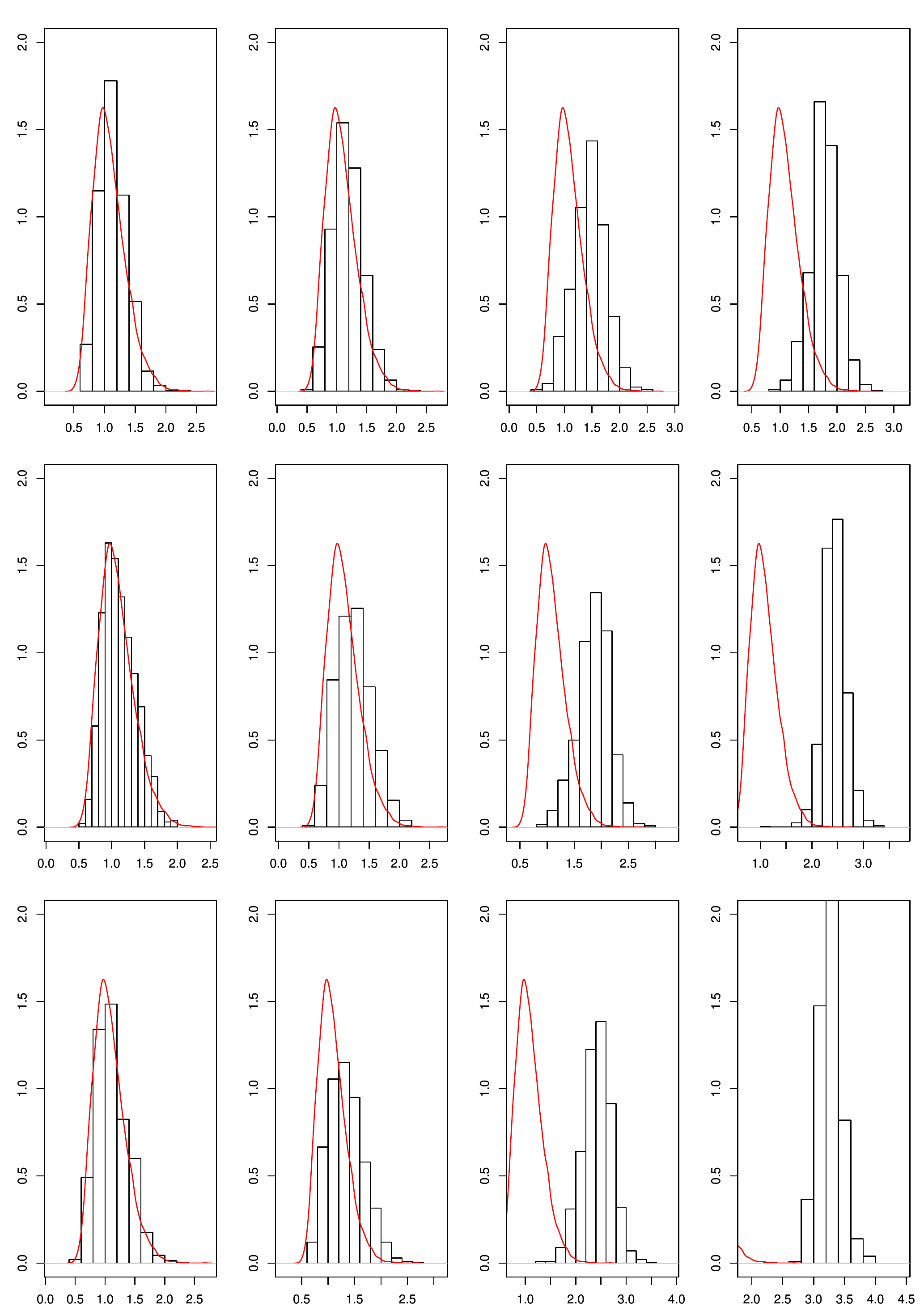}
\label{fig16}
\end{center}
\end{figure}
\begin{figure}[h]
\begin{center}
\caption{
Histograms of $T_n^\alpha$ when { $n=8\times10^3, 1.25\times10^5$} and $10^6$ in order from the first row 
and $\gamma$ changes from $1$ to $0.9, 0.5, 0$ and $-1$ in oder from the first column ((i) of Case 3). 
The red line is the probability density function of $\sup_{0\le s\le 1}|\boldsymbol B_1^0(s)|$.
}
\includegraphics[height=10cm, width=15cm]{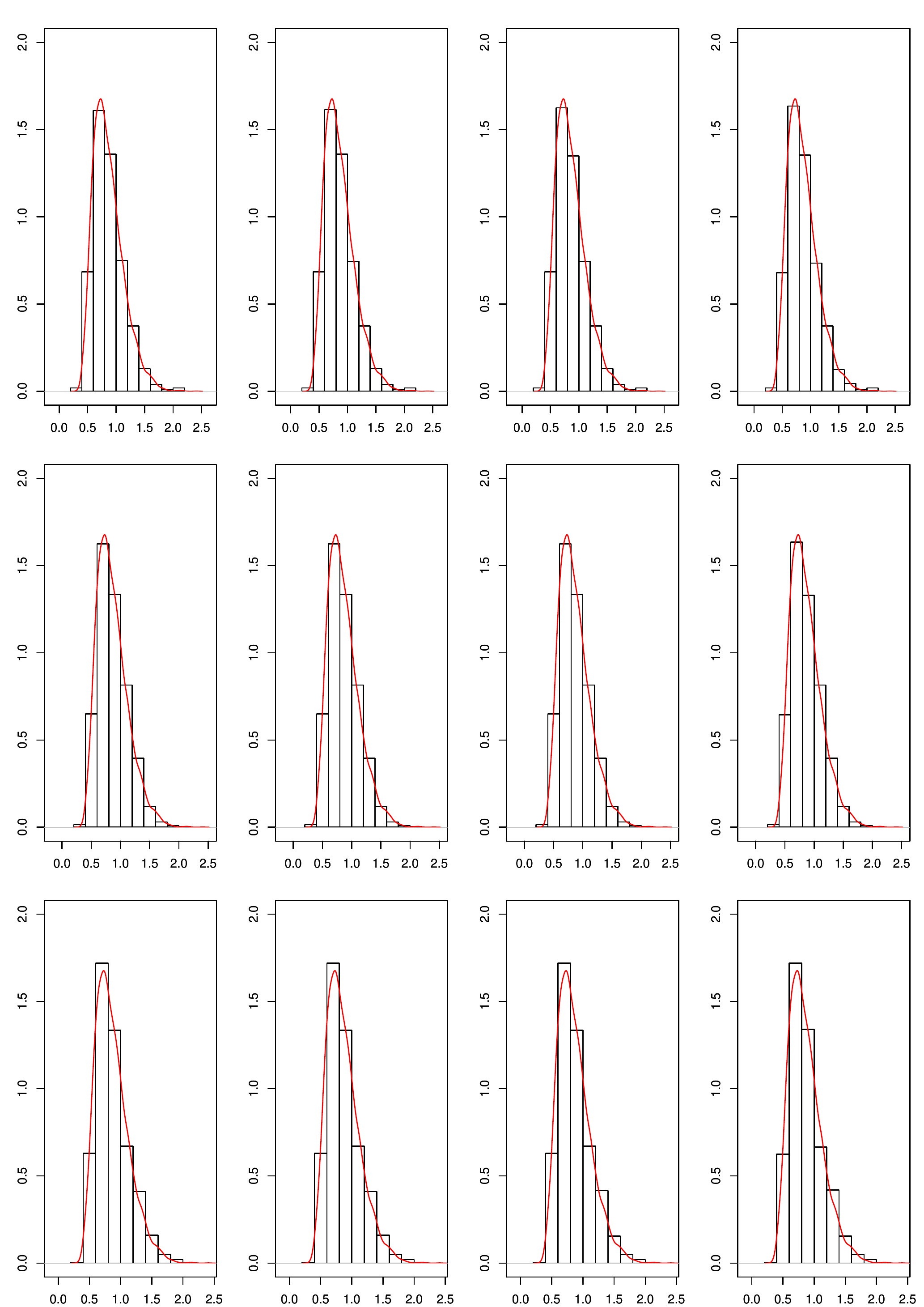}
\label{fig17}
\caption{
Histograms of $T_{1,n}^\beta$ when { $n=8\times10^3, 1.25\times10^5$} and $10^6$ in order from the first row 
and $\gamma$ changes from $1$ to $0.9, 0.5, 0$ and $-1$ in oder from the first column ((ii) of Case 3). 
The red line is the probability density function of $\sup_{0\le s\le 1}|\boldsymbol B_1^0(s)|$.
}
\includegraphics[height=10cm, width=15cm]{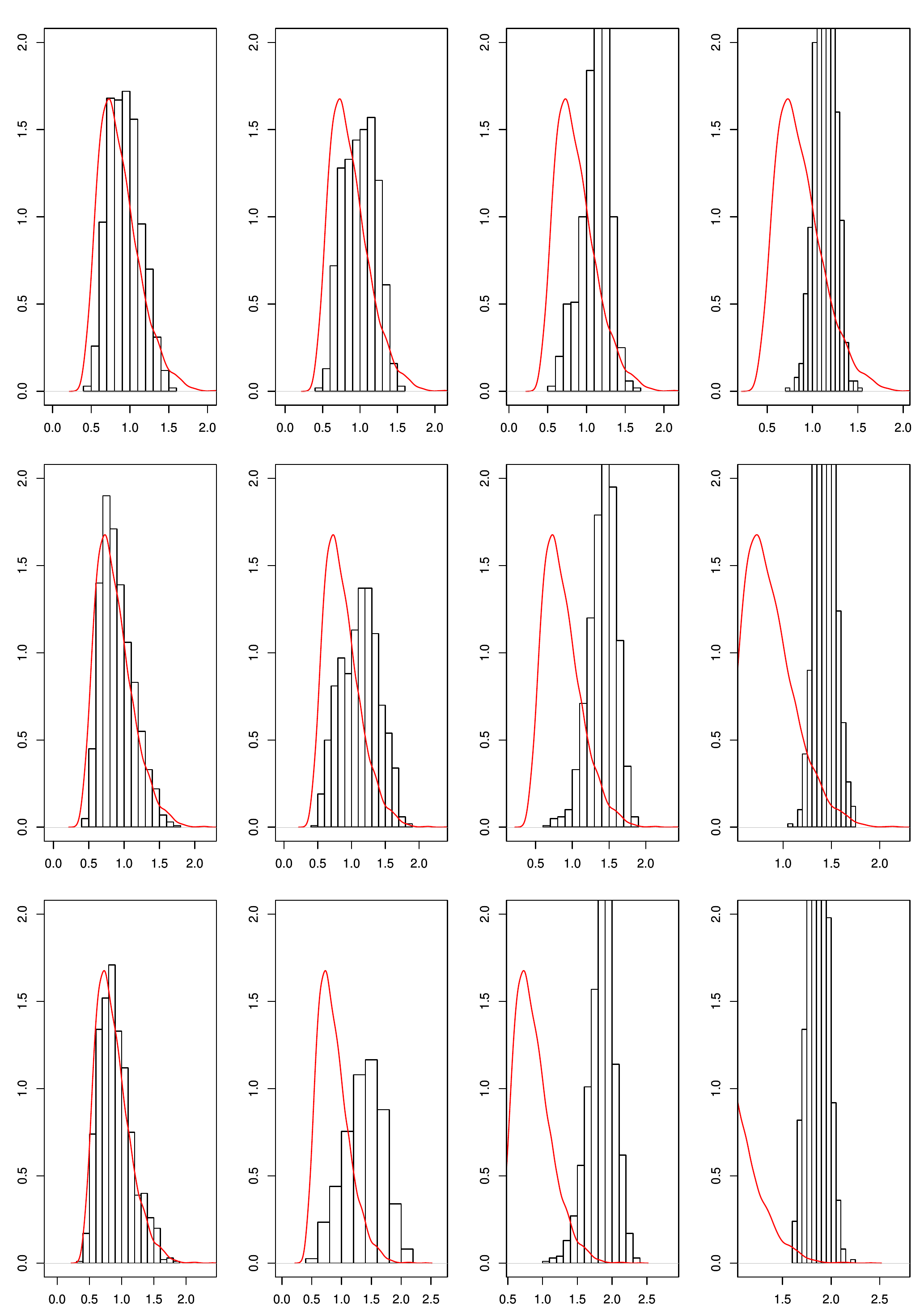}
\label{fig18}
\end{center}
\end{figure}
\begin{figure}[h]
\begin{center}
\caption{
Histograms of $T_{2,n}^\beta$ when { $n=8\times10^3, 1.25\times10^5$} and $10^6$ in order from the first row 
and $\gamma$ changes from $1$ to $0.9, 0.5, 0$ and $-1$ in oder from the first column 
((iii) of Case 3). 
The red line is the probability density function of $\sup_{0\le s\le 1}\|\boldsymbol B_2^0(s)\|$.
}
\includegraphics[height=10cm, width=15cm]{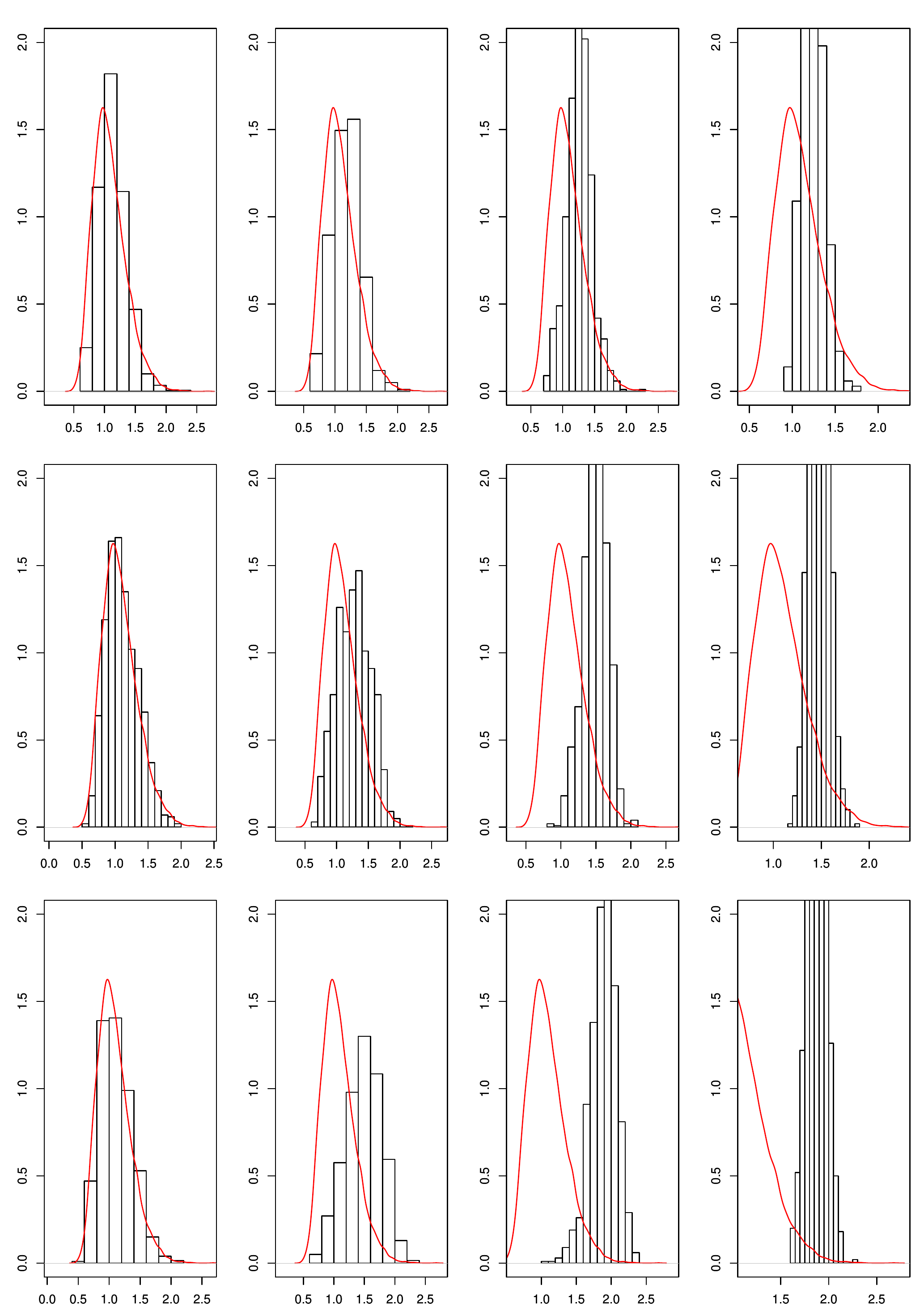}
\label{fig19}
\caption{
Histograms of $T_n^\alpha$ when { $n=8\times10^3, 1.25\times10^5$} and $10^6$ in order from the first row 
and $(\beta,\gamma)$ changes from $(1,1)$ to $(3,0.5), (3,0), (5,0.5)$ and $(5,0)$ in oder from the first column ((i) of Case 3). 
The red line is the probability density function of $\sup_{0\le s\le 1}|\boldsymbol B_1^0(s)|$.
}
\includegraphics[height=10cm, width=15cm]{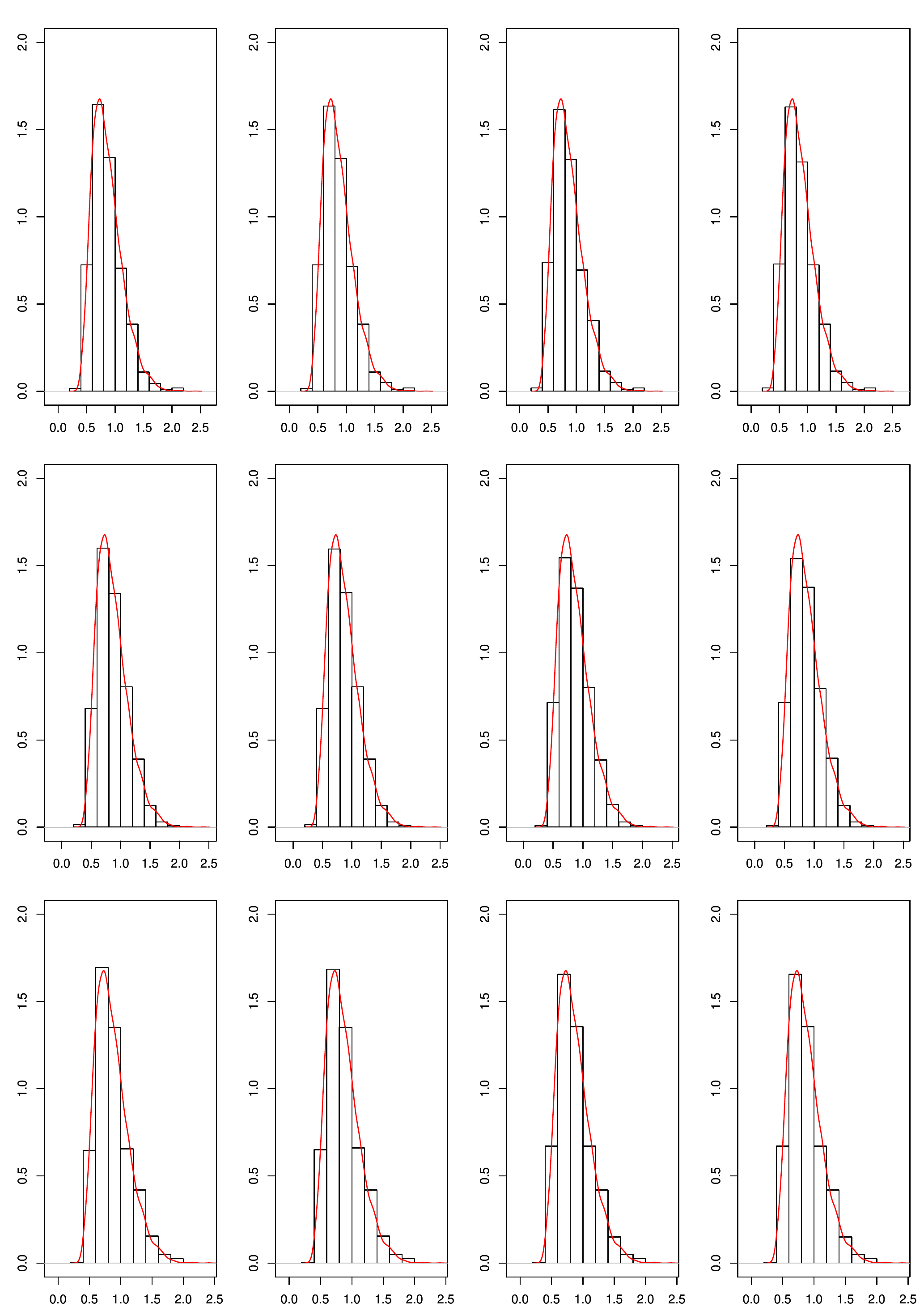}
\label{fig20}
\end{center}
\end{figure}
\begin{figure}[h]
\begin{center}
\caption{
Histograms of $T_{1,n}^\beta$ when { $n=8\times10^3, 1.25\times10^5$} and $10^6$ in order from the first row 
and $(\beta,\gamma)$ changes from $(1,1)$ to $(3,0.5), (3,0), (5,0.5)$ and $(5,0)$ in oder from the first column ((ii) of Case 3). 
The red line is the probability density function of $\sup_{0\le s\le 1}|\boldsymbol B_1^0(s)|$.
}
\includegraphics[height=10cm, width=15cm]{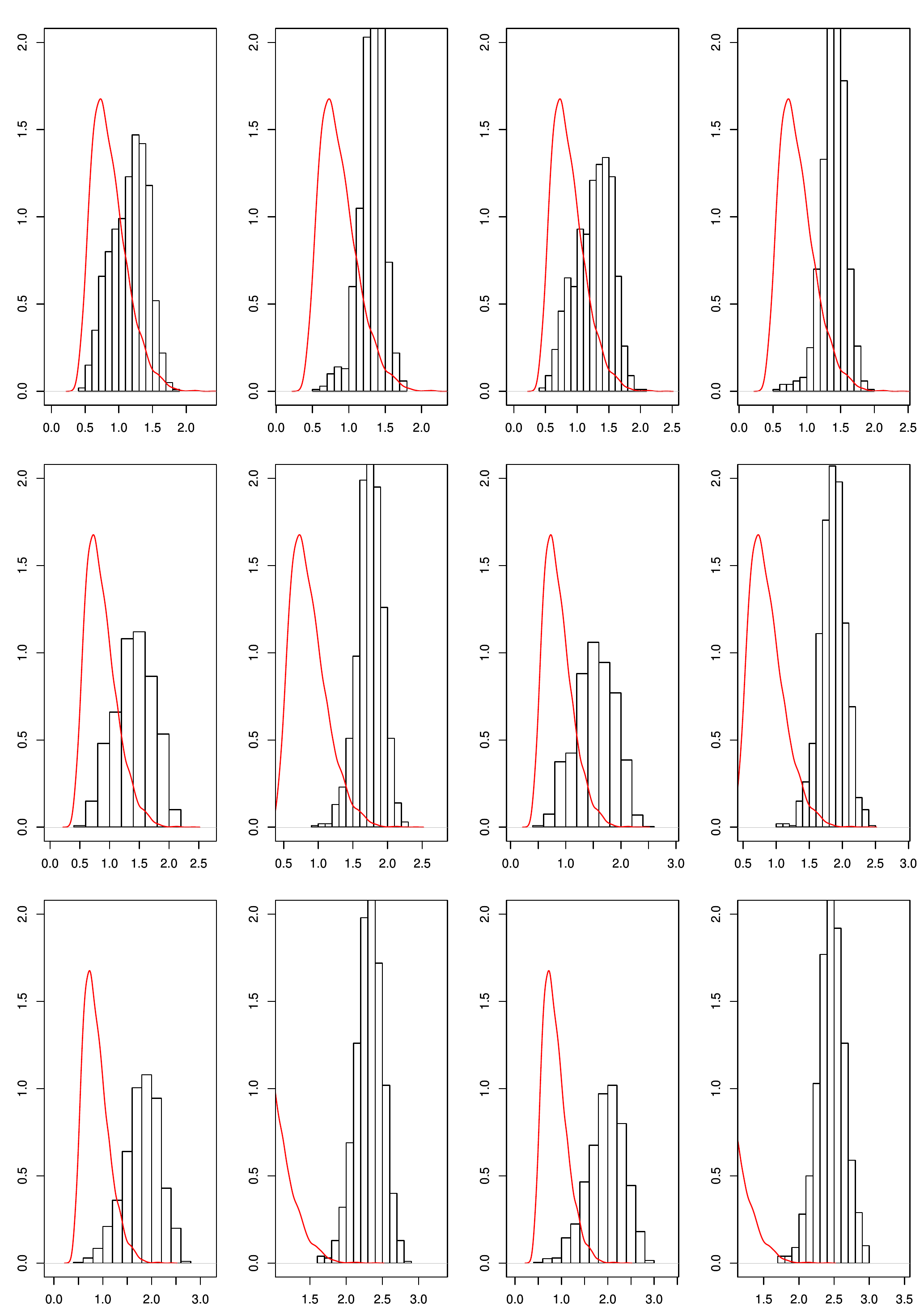}
\label{fig21}
\caption{
Histograms of $T_{2,n}^\beta$ when { $n=8\times10^3, 1.25\times10^5$} and $10^6$ in order from the first row 
and $(\beta,\gamma)$ changes from $(1,1)$ to $(3,0.5), (3,0), (5,0.5)$ and $(5,0)$ in oder from the first column ((iii) of Case 3). 
The red line is the probability density function of $\sup_{0\le s\le 1}\|\boldsymbol B_2^0(s)\|$.
}
\includegraphics[height=10cm, width=15cm]{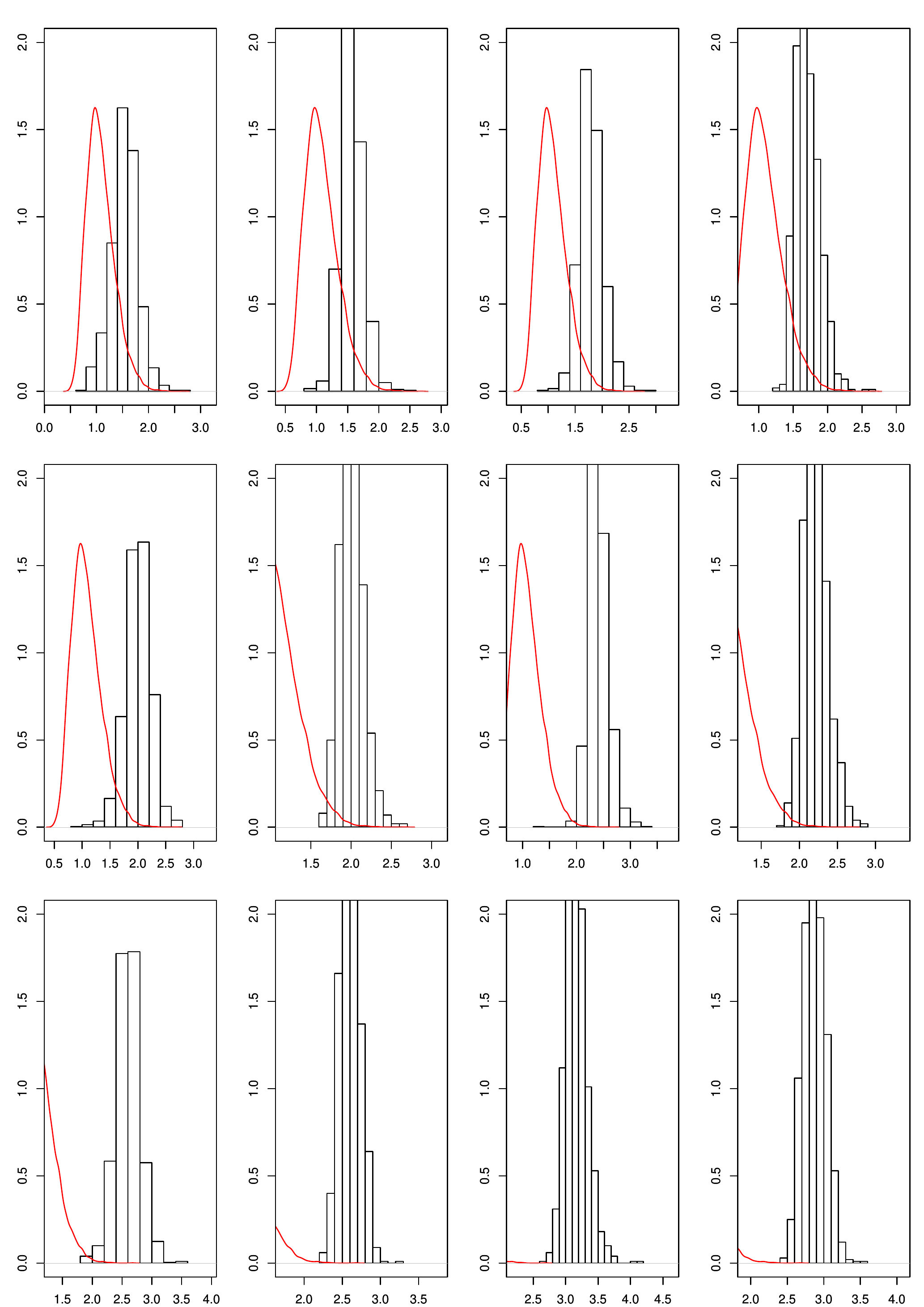}
\label{fig22}
\end{center}
\end{figure}
\begin{figure}[h]
\begin{center}
\caption{
Empirical distribution functions of $T_n^\alpha$ when { $n=8\times10^3, 1.25\times10^5$} and $10^6$ in order from the first row 
and $(\alpha,\beta,\gamma)=(1,1,1), (0.5,1,0), (1.5,1.5,-1)$ and $(2,3,0.5)$ in oder from the first column (Case 1). 
The red line is the cumulative distribution function of $\sup_{0\le s\le 1}|\boldsymbol B_1^0(s)|$.
}
\includegraphics[height=10cm, width=15cm]{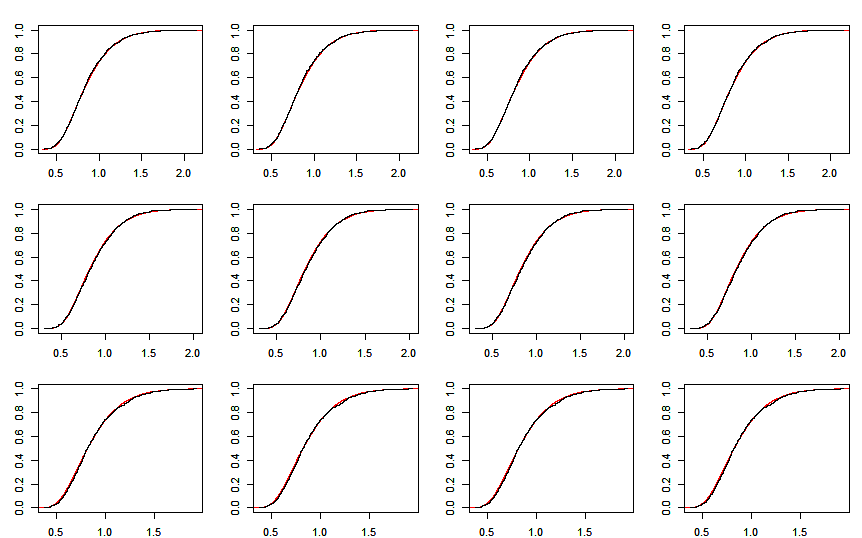}
\label{fig23}
\caption{
Empirical distribution functions of $T_{1,n}^\beta$ when { $n=8\times10^3, 1.25\times10^5$} and $10^6$ in order from the first row 
and $(\alpha,\beta,\gamma)=(1,1,1), (0.5,1,0), (1.5,1.5,-1)$ and $(2,3,0.5)$ in oder from the first column (Case 1). 
The red line is the cumulative distribution function of $\sup_{0\le s\le 1}|\boldsymbol B_1^0(s)|$.
}
\includegraphics[height=10cm, width=15cm]{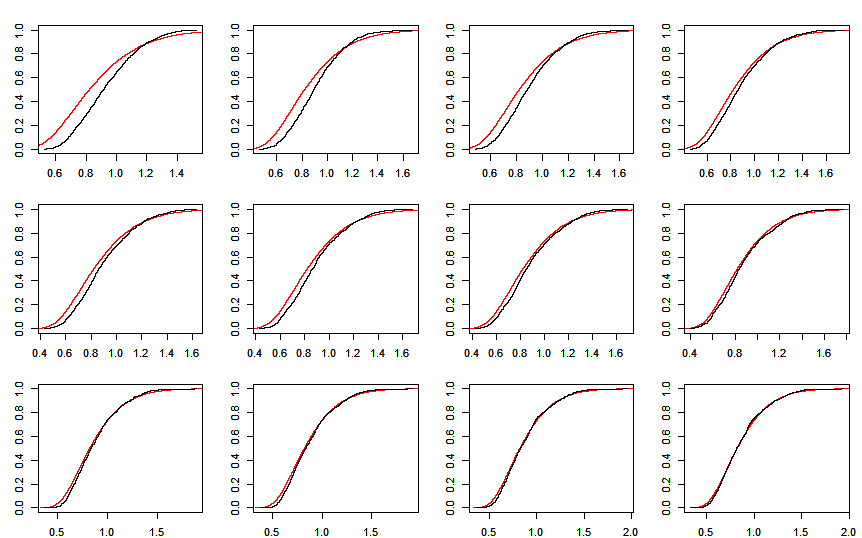}
\label{fig24}
\end{center}
\end{figure}
\begin{figure}[h]
\begin{center}
\caption{
Empirical distribution functions of $T_{2,n}^\beta$ when { $n=8\times10^3, 1.25\times10^5$} and $10^6$ in order from the first row 
and $(\alpha,\beta,\gamma)=(1,1,1), (0.5,1,0), (1.5,1.5,-1)$ and $(2,3,0.5)$ in oder from the first column (Case 1). 
The red line is the cumulative distribution function of 
$\sup_{0\le s\le 1}\|\boldsymbol B_2^0(s)\|$.
}
\includegraphics[height=10cm, width=15cm]{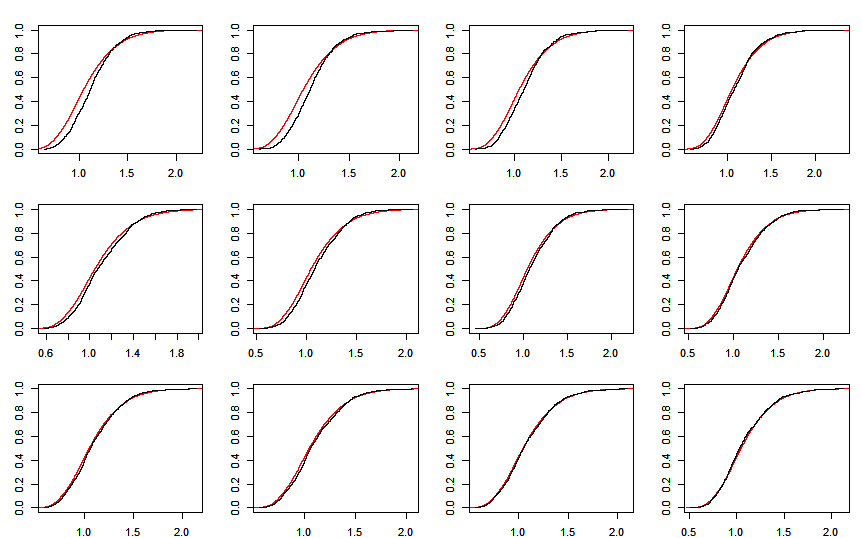}
\label{fig25}
\caption{
Empirical distribution functions of $T_n^\alpha$ when { $n=8\times10^3, 1.25\times10^5$} and $10^6$ in order from the first row 
and $\alpha$ changes from $1$ to $1.01, 1.05, 1.1$ and $1.5$ in oder from the first column ((i) of Case 2). 
The red line is the cumulative distribution function of $\sup_{0\le s\le 1}|\boldsymbol B_1^0(s)|$.
}
\includegraphics[height=10cm, width=15cm]{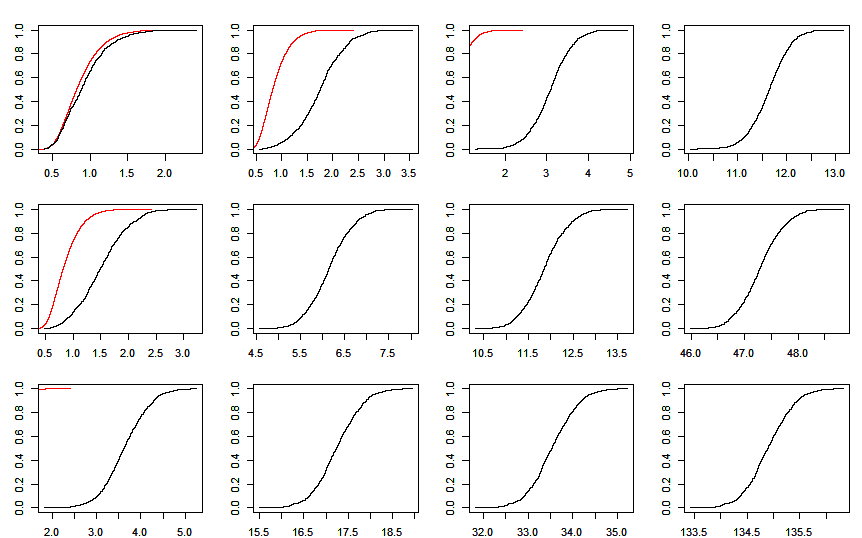}
\label{fig26}
\end{center}
\end{figure}
\begin{figure}[h]
\begin{center}
\caption{
Empirical distribution functions of $T_n^\alpha$ when { $n=8\times10^3, 1.25\times10^5$} and $10^6$ in order from the first row 
and $\beta$ changes from $1$ to $1.1, 1.5, 3$ and $5$ in oder from the first column ((i) of Case 3). 
The red line is the cumulative distribution function of $\sup_{0\le s\le 1}|\boldsymbol B_1^0(s)|$.
}
\includegraphics[height=10cm, width=15cm]{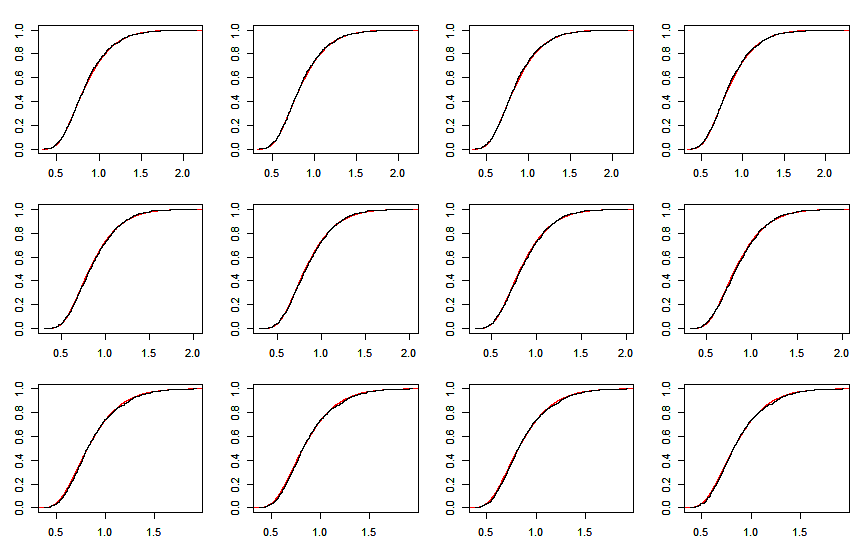}
\label{fig27}
\caption{
Empirical distribution functions of $T_{1,n}^\beta$ when { $n=8\times10^3, 1.25\times10^5$} and $10^6$ in order from the first row 
and $\beta$ changes from $1$ to $1.1, 1.5, 3$ and $5$ in oder from the first column ((ii) of Case 3). 
The red line is the cumulative distribution function of $\sup_{0\le s\le 1}|\boldsymbol B_1^0(s)|$.
}
\includegraphics[height=10cm, width=15cm]{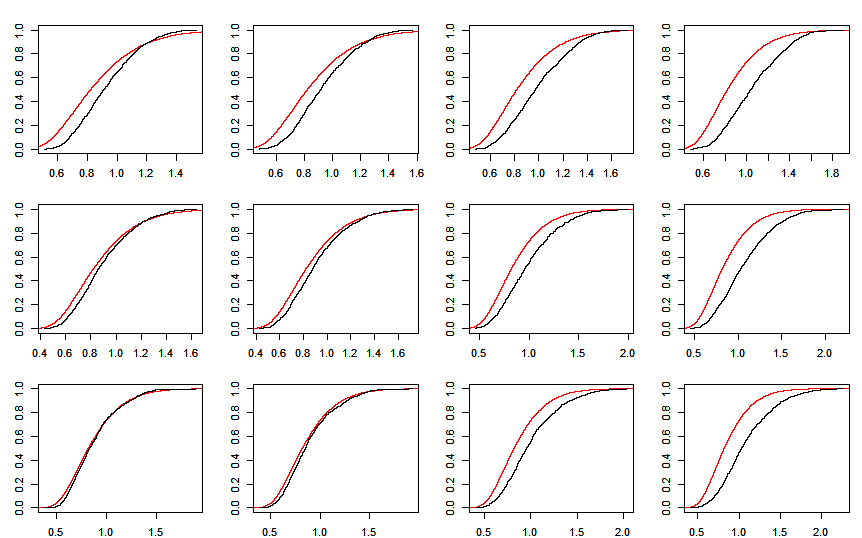}
\label{fig28}
\end{center}
\end{figure}
\begin{figure}[h]
\begin{center}
\caption{
Empirical distribution functions of $T_{2,n}^\beta$ when { $n=8\times10^3, 1.25\times10^5$} and $10^6$ in order from the first row 
and $\beta$ changes from $1$ to $1.1, 1.5, 3$ and $5$ in oder from the first column ((iii) of Case 3). 
The red line is the cumulative distribution function of 
$\sup_{0\le s\le 1}\|\boldsymbol B_2^0(s)\|$.
}
\includegraphics[height=10cm, width=15cm]{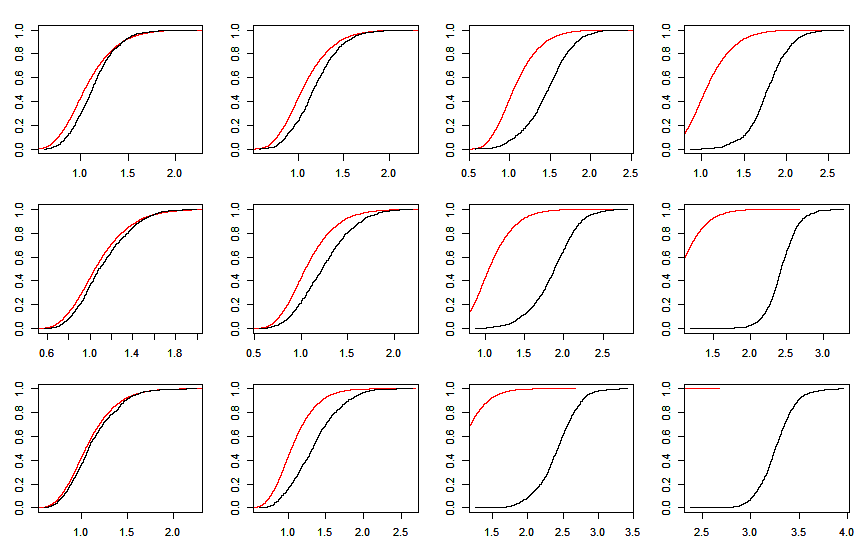}
\label{fig29}
\caption{
Empirical distribution functions of $T_n^\alpha$ when { $n=8\times10^3, 1.25\times10^5$} and $10^6$ in order from the first row 
and $\gamma$ changes from $1$ to $0.9, 0.5, 0$ and $-1$ in oder from the first column ((i) of Case 3). 
The red line is the cumulative distribution function of $\sup_{0\le s\le 1}|\boldsymbol B_1^0(s)|$.
}
\includegraphics[height=10cm, width=15cm]{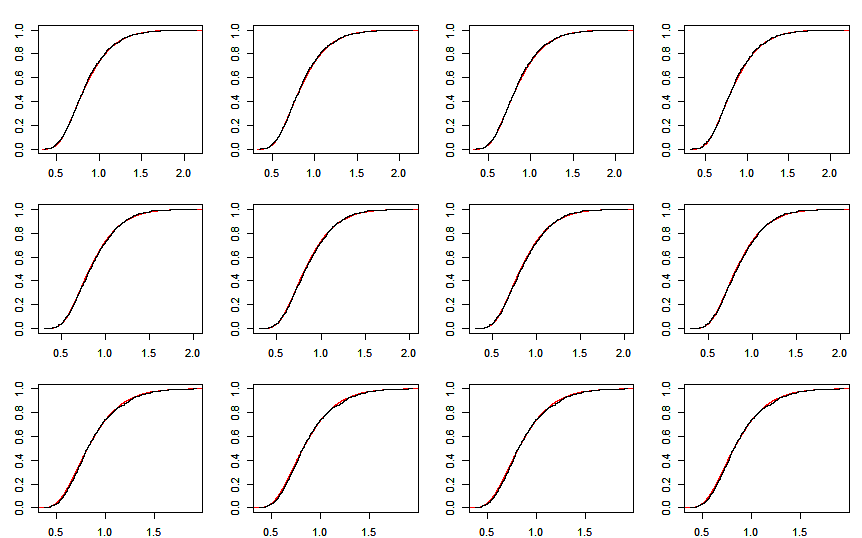}
\label{fig30}
\end{center}
\end{figure}
\clearpage
\begin{figure}[h]
\begin{center}
\caption{
Empirical distribution functions of $T_{1,n}^\beta$ when { $n=8\times10^3, 1.25\times10^5$} and $10^6$ in order from the first row 
and $\gamma$ changes from $1$ to $0.9, 0.5, 0$ and $-1$ in oder from the first column ((ii) of Case 3). 
The red line is the cumulative distribution function of $\sup_{0\le s\le 1}|\boldsymbol B_1^0(s)|$.
}
\includegraphics[height=10cm, width=15cm]{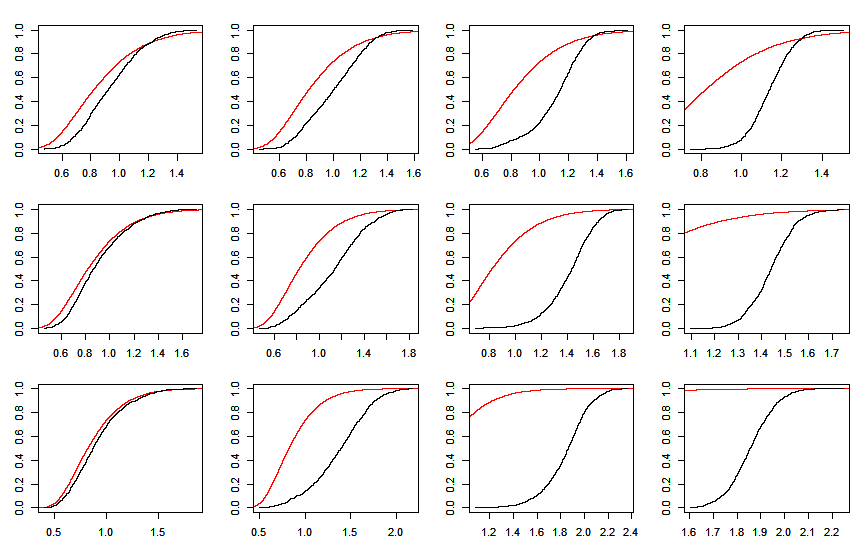}
\label{fig31}
\caption{
Empirical distribution functions of $T_{2,n}^\beta$ when { $n=8\times10^3, 1.25\times10^5$} and $10^6$ in order from the first row 
and $\gamma$ changes from $1$ to $0.9, 0.5, 0$ and $-1$ in oder from the first column ((iii) of Case 3). 
The red line is the probability density function of $\sup_{0\le s\le 1}\|\boldsymbol B_2^0(s)\|$ obtained by simulation.
}
\includegraphics[height=10cm, width=15cm]{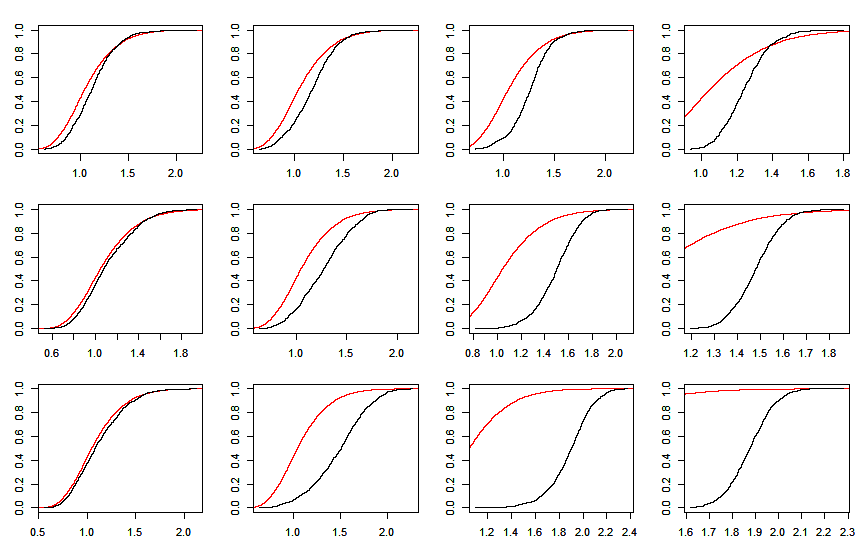}
\label{fig32}
\end{center}
\end{figure}
\begin{figure}[h]
\begin{center}
\caption{
Empirical distribution functions of $T_n^\alpha$ when { $n=8\times10^3, 1.25\times10^5$} and $10^6$ in order from the first row 
and $(\beta,\gamma)$ changes from $(1,1)$ to $(3,0.5), (3,0), (5,0.5)$ and $(5,0)$ in oder from the first column ((i) of Case 3). 
The red line is the cumulative distribution function of $\sup_{0\le s\le 1}|\boldsymbol B_1^0(s)|$.
}
\includegraphics[height=10cm, width=15cm]{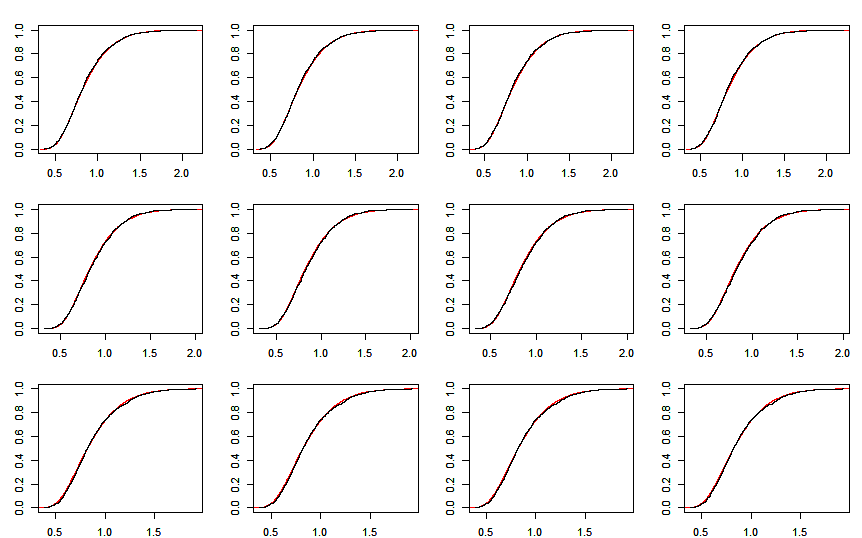}
\label{fig33}
\caption{
Empirical distribution functions of $T_{1,n}^\beta$ when { $n=8\times10^3, 1.25\times10^5$} and $10^6$ in order from the first row 
and $(\beta,\gamma)$ changes from $(1,1)$ to $(3,0.5), (3,0), (5,0.5)$ and $(5,0)$ in oder from the first column ((ii) of Case 3). 
The red line is the cumulative distribution function of $\sup_{0\le s\le 1}|\boldsymbol B_1^0(s)|$.
}
\includegraphics[height=10cm, width=15cm]{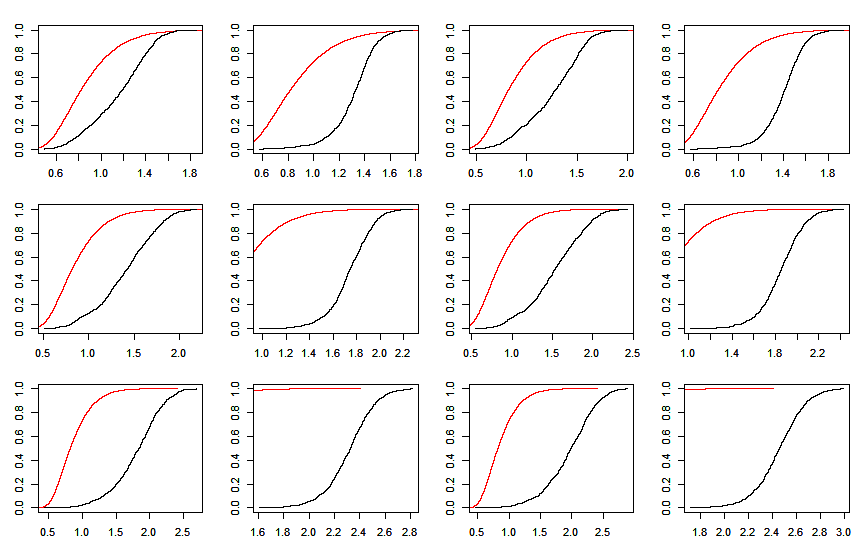}
\label{fig34}
\end{center}
\end{figure}
\clearpage
\begin{figure}[h]
\begin{center}
\caption{
Empirical distribution functions of $T_{2,n}^\beta$ when { $n=8\times10^3, 1.25\times10^5$} and $10^6$ in order from the first row 
and $(\beta,\gamma)$ changes from $(1,1)$ to $(3,0.5), (3,0), (5,0.5)$ and $(5,0)$ in oder from the first column ((iii) of Case 3). 
The red line is the cumulative distribution function of 
$\sup_{0\le s\le 1}\|\boldsymbol B_2^0(s)\|$.
}
\includegraphics[height=10cm, width=15cm]{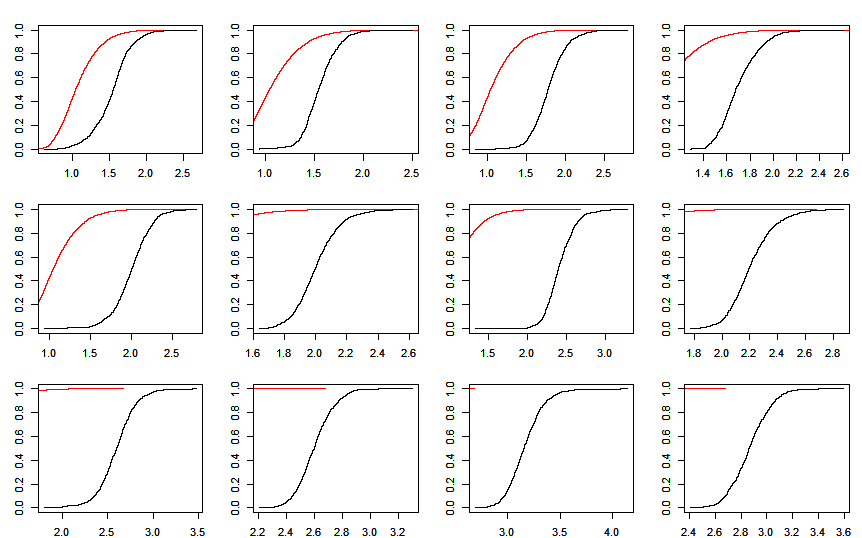}
\label{fig35}
\end{center}
\end{figure}

\section{Proofs}
Let $\GG=\sigma\left[\{W_s\}_{s\le t_i^n}\right]$, and 
$C$, $C'>0$ denote universal constants.
If 
$f$ is a function on $\mathbb R^d\times \Theta$,
{ $f_{i-1}(\theta)$
denotes 
the value $f(\Xs,\theta)$}.
If $\{u_n\}$ is a positive sequence, $R$ denotes a function 
on $\mathbb R^d\times\mathbb R_{+}\times\Theta$
for which there exists a constant $C>0$ such that 
\begin{align*}
\sup_{\theta\in\Theta}\| R(x,u_n,\theta)\|\le u_n C(1+\|x\|)^C.
\end{align*}
Let $R_{i-1}(u_n,\theta)=R(\Xs,u_n,\theta)$.
\begin{lem}[Kessler,1997]\label{ke7}
Suppose that {\textbf{[A1]}}-{\textbf{[A4]}} hold. 
Then
for $\ell, \ell_1, \ell_2, \ell_3, \ell_4 =1, \ldots, d$, 
\begin{align}
&\Ep\left[(\DeX)^\ell\middle|\GG\right]
=h_nb_{i-1}^\ell(\beta)+\Rd,\label{ke7-1}\\
&\Ep\left[(\DeX)^{\ell_1}(\DeX)^{\ell_2}\middle|\GG\right]
=h_nA^{\ell_1,\ell_2}_{i-1}(\alpha)+\Rd,\label{ke7-2}\\
&\Ep\left[
(\DeX)^{\ell_1}(\DeX)^{\ell_2}(\DeX)^{\ell_3}(\DeX)^{\ell_4}\middle|\GG
\right]\nonumber\\
&=h_n^2\left(A^{\ell_1,\ell_2}_{i-1}A^{\ell_3,\ell_4}_{i-1}(\alpha)
+A^{\ell_1,\ell_3}_{i-1}A^{\ell_2,\ell_4}_{i-1}(\alpha)
+A^{\ell_1,\ell_4}_{i-1}A^{\ell_2,\ell_3}_{i-1}(\alpha)\right)
+\Rt.\label{ke7-3}
\end{align}
\end{lem}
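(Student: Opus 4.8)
The plan is to read off all three formulas from a single It\^o--Taylor (Dynkin) expansion, applied to centered polynomials chosen so that the low-order terms of the expansion vanish identically and only the advertised leading term survives. Throughout let $\mathcal L_\theta$ be the generator of \eqref{eq2.1}, i.e.\ $\mathcal L_\theta g(x)=\sum_{\ell=1}^d b^\ell(x,\beta)\partial_{x^\ell}g(x)+\tfrac12\sum_{\ell,m=1}^d A^{\ell m}(x,\alpha)\partial_{x^\ell}\partial_{x^m}g(x)$. First I would record the localized conditional moment estimate: for each $N$ there is a constant $C_N>0$, independent of $\theta$ and $i$, with
\begin{align*}
\Ep\Big[\sup_{t_{i-1}^n\le u\le t_i^n}(1+\|X_u\|)^N\;\Big|\;\GG\Big]\le C_N(1+\|\Xs\|)^N ,
\end{align*}
a standard Gronwall/Burkholder--Davis--Gundy estimate using only \textbf{[A1]} (which makes $a,b$ of linear growth, uniformly on the compact $\Theta$) and $h_n\le1$. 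Iterating Dynkin's identity $\Ep[g(\Xt)\,|\,\GG]=g(\Xs)+\int_{t_{i-1}^n}^{t_i^n}\Ep[\mathcal L_\theta g(X_s)\,|\,\GG]\dd s$ then yields, for any $g$ such that $g,\mathcal L_\theta g,\dots,\mathcal L_\theta^{k}g$ are continuous and of polynomial growth uniformly in $\theta$,
\begin{align*}
\Ep[g(\Xt)\,|\,\GG]=\sum_{j=0}^{k-1}\frac{h_n^{j}}{j!}(\mathcal L_\theta^{j}g)(\Xs)+R_{i-1}(h_n^{k},\theta),
\end{align*}
the remainder being the $k$-fold iterated time integral of $\Ep[\mathcal L_\theta^{k}g(X_v)\,|\,\GG]$, which by the moment estimate belongs to the class $R_{i-1}(h_n^{k},\theta)$.

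Next I would apply this to the centered monomials $g(x)=\prod_{r=1}^{\nu}\big(x^{\ell_r}-(\Xs)^{\ell_r}\big)$, noting that $g(\Xt)=\prod_{r=1}^\nu(\DeX)^{\ell_r}$ and that $\Xs$, being $\GG$-measurable, may be treated as a fixed parameter in $g$; I take $\nu=1,k=2$ for \eqref{ke7-1}, $\nu=2,k=2$ for \eqref{ke7-2}, and $\nu=4,k=3$ for \eqref{ke7-3}. Since $g$ is a polynomial in $x$, the regularity and uniform polynomial growth of $\mathcal L_\theta^{j}g$ required above come from $a\in C^{4,2}_{\uparrow}(\mathbb R^d\times\Theta_A)$ and $b\in C^{4,2}_{\uparrow}(\mathbb R^d\times\Theta_B)$ in \textbf{[A4]}. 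The point of centering is that every $x$-derivative of $g$ of order $<\nu$ vanishes at $\Xs$, while each application of $\mathcal L_\theta$ raises the order of $x$-differentiation by at most $2$; hence $(\mathcal L_\theta^{j}g)(\Xs)=0$ whenever $2j<\nu$, and those terms disappear from the expansion. For $\nu=1$ one is left with $h_n(\mathcal L_\theta g)(\Xs)=h_n b_{i-1}^{\ell}(\beta)$ and a remainder $\Rd$, which is \eqref{ke7-1}. For $\nu=2$ the first-order part of $\mathcal L_\theta g$ vanishes at $\Xs$ (since $\partial_x g(\Xs)=0$) and the second-order part equals $\tfrac12\big(A_{i-1}^{\ell_1,\ell_2}+A_{i-1}^{\ell_2,\ell_1}\big)(\alpha)=A_{i-1}^{\ell_1,\ell_2}(\alpha)$ by symmetry of $A$, giving \eqref{ke7-2} with remainder $\Rd$.

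For \eqref{ke7-3} the expansion runs to order $h_n^2$, and the terms $j=0,1$ vanish by the above. In $(\mathcal L_\theta^{2}g)(\Xs)$ every contribution in which some differentiation falls on $a$ or $b$ leaves a derivative of $g$ of order $\le3<4$, hence vanishes at $\Xs$; the sole survivor is $\tfrac14\sum_{u_1,\dots,u_4=1}^{d}A_{i-1}^{u_1,u_2}A_{i-1}^{u_3,u_4}\,\partial_{x^{u_1}}\partial_{x^{u_2}}\partial_{x^{u_3}}\partial_{x^{u_4}}g(\Xs)$. Since the fourth $x$-derivative of $g$ is the sum over the $24$ bijections from $(u_1,u_2,u_3,u_4)$ onto $(\ell_1,\ell_2,\ell_3,\ell_4)$ of the corresponding product of Kronecker deltas, using $A=A^{\TT}$ these collapse into $8$ copies each of $A^{\ell_1,\ell_2}A^{\ell_3,\ell_4}$, $A^{\ell_1,\ell_3}A^{\ell_2,\ell_4}$ and $A^{\ell_1,\ell_4}A^{\ell_2,\ell_3}$, so that $\tfrac{h_n^2}{2}(\mathcal L_\theta^{2}g)(\Xs)=h_n^2\big(A_{i-1}^{\ell_1,\ell_2}A_{i-1}^{\ell_3,\ell_4}+A_{i-1}^{\ell_1,\ell_3}A_{i-1}^{\ell_2,\ell_4}+A_{i-1}^{\ell_1,\ell_4}A_{i-1}^{\ell_2,\ell_3}\big)(\alpha)$; together with the remainder $\Rt$ this is \eqref{ke7-3}.

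The step I expect to be the main obstacle is not any individual computation but the remainder bookkeeping underlying the first paragraph: one must check that the iterated time integral of $\Ep[\mathcal L_\theta^{k}g(X_v)\,|\,\GG]$ genuinely satisfies the defining estimate of $R_{i-1}(h_n^{k},\theta)$, i.e.\ that its polynomial-growth constant can be chosen uniform in $\theta$ and $i$. This is exactly where the uniform-in-$\theta$ polynomial growth of the coefficients and of all their $x$-derivatives built into \textbf{[A4]}, together with the localized moment estimate above, enter; the combinatorial identification of the coefficient in \eqref{ke7-3} is the only other point requiring care.
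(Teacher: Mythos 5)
The paper does not actually prove this lemma --- it is quoted verbatim from Kessler (1997) --- so there is no internal proof to compare against; your argument is precisely the standard one from that source (the iterated Dynkin/It\^o--Taylor expansion applied to monomials centered at $\Xs$), and it is correct. In particular, the centering correctly kills every term $(\mathcal L_\theta^{j}g)(\Xs)$ with $2j<\nu$, the combinatorial count $\tfrac12\cdot\tfrac14\cdot 8=1$ yielding the coefficient of $h_n^2$ in \eqref{ke7-3} is right, and the remainder lands in the stated $R$-class because \textbf{[A4]} supplies uniform-in-$\theta$ polynomial growth of $a$, $b$ and their first four $x$-derivatives while \textbf{[A1]} gives the localized conditional moment bound you invoke.
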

{ 
Let 
\begin{align*}
\eta_i&=\tr\left(A^{-1}_{i-1}(\alpha_0)
\frac{(\Delta X_i)^{\otimes2}}{h_n}\right),\quad
\kappa(x,\alpha)=
1_d^\TT a^{-1}(x,\alpha)
,\quad
\xi_i=\kappa_{i-1}(\alpha_0)(\DeX-h_nb_{i-1}(\beta_0)),\\
\zeta_i&=\partial_\beta b_{i-1}(\beta_0)^\TT A_{i-1}^{-1}(\alpha_0)
\left(\DeX-h_nb_{i-1}(\beta_0)\right).
\end{align*}

\begin{lem} 
Suppose that {\textbf{[A1]}}-{\textbf{[A4]}} hold. 
Then,
\begin{align}
\EEt\left[\eta_i\middle|\GG\right]&=d+\Ri,\label{keq1}\\
\EEt\left[\eta_i^2\middle|\GG\right]
&=d^2+2d+\Ri,\\
\EEt\left[\xi_i\middle|\GG\right]&=\Rd,\\
\EEt\left[\xi_i^2\middle|\GG\right]
&=dh_n+\Rd,\\
\EEt\left[\zeta_i\middle|\GG\right]
&=\Rd,\\
\EEt\left[\zeta_i\zeta_i^\TT\middle|\GG\right]
&=h_n\partial_\beta b_{i-1}(\beta_0)^\TT A_{i-1}^{-1}(\alpha_0)
\partial_\beta b_{i-1}(\beta_0)+\Rd.\label{keq4}
\end{align}
\end{lem}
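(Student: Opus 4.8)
The plan is to substitute the increment expansions of Lemma~\ref{ke7} into the definitions of $\eta_i$, $\xi_i$, $\zeta_i$, expand the resulting polynomials in the coordinates of $\DeX$, read off the leading deterministic constants from the matrix identities $\tr\bigl(A_{i-1}^{-1}(\alpha_0)A_{i-1}(\alpha_0)\bigr)=d$ and $a^{-1}A(a^{-1})^\TT=I_d$, and absorb everything else into $R$-terms. Throughout the argument one uses that, under {\textbf{[A3]}} and {\textbf{[A4]}}, the functions $A^{-1}$, $a^{-1}$, $b$, $\partial_\beta b$ and their finite products are of polynomial growth in $x$, so that multiplying an $R_{i-1}(h_n^a,\theta)$-term by any such function again gives an $R_{i-1}(h_n^a,\theta)$-term, that $R_{i-1}(h_n^a,\theta)R_{i-1}(h_n^b,\theta)=R_{i-1}(h_n^{a+b},\theta)$, and that $h_n^aR_{i-1}(h_n^b,\theta)=R_{i-1}(h_n^{a+b},\theta)$; in particular a genuine term such as $h_n^2b_{i-1}^{\ell_1}(\beta_0)b_{i-1}^{\ell_2}(\beta_0)$ is itself of the form $\Rd$.

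For $\eta_i$, I would write $\eta_i=\sum_{\ell_1,\ell_2}\bigl(A_{i-1}^{-1}(\alpha_0)\bigr)^{\ell_1,\ell_2}(\DeX)^{\ell_1}(\DeX)^{\ell_2}/h_n$ and apply \eqref{ke7-2}; the leading term is $\tr\bigl(A_{i-1}^{-1}(\alpha_0)A_{i-1}(\alpha_0)\bigr)=d$ and the remainder is $\Ri$, which is \eqref{keq1}. For $\EEt[\eta_i^2\,|\,\GG]$ I would expand $\eta_i^2$ as a quadruple sum over $\ell_1,\ldots,\ell_4$ and use the fourth-moment formula \eqref{ke7-3}: contracting the three pairings against the two factors $A_{i-1}^{-1}$ gives $\bigl(\tr A_{i-1}^{-1}A_{i-1}\bigr)^2+\tr A_{i-1}^{-1}A_{i-1}+\tr A_{i-1}^{-1}A_{i-1}=d^2+2d$, while the $\Rt$ term divided by $h_n^2$ contributes $\Ri$.

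For $\xi_i$ and $\zeta_i$ the common ingredient is the centered increment $\DeX-h_nb_{i-1}(\beta_0)$. From \eqref{ke7-1} one gets $\EEt[\DeX-h_nb_{i-1}(\beta_0)\,|\,\GG]=\Rd$, which, multiplied by $\kappa_{i-1}(\alpha_0)$ respectively by $\partial_\beta b_{i-1}(\beta_0)^\TT A_{i-1}^{-1}(\alpha_0)$, yields the two first-moment identities. Expanding $(\DeX-h_nb_{i-1}(\beta_0))^{\otimes2}$ and applying \eqref{ke7-1} and \eqref{ke7-2}, the two cross terms and the $h_n^2b_{i-1}(\beta_0)^{\otimes2}$ term all collapse into $\Rd$, leaving $\EEt[(\DeX-h_nb_{i-1}(\beta_0))^{\otimes2}\,|\,\GG]=h_nA_{i-1}(\alpha_0)+\Rd$. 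Pre- and post-multiplying by $\kappa_{i-1}(\alpha_0)=1_d^\TT a^{-1}_{i-1}(\alpha_0)$ and using $a^{-1}A(a^{-1})^\TT=I_d$ gives $\EEt[\xi_i^2\,|\,\GG]=h_n\,1_d^\TT1_d+\Rd=dh_n+\Rd$; pre- and post-multiplying instead by $\partial_\beta b_{i-1}(\beta_0)^\TT A_{i-1}^{-1}(\alpha_0)$ and using $A_{i-1}^{-1}A_{i-1}A_{i-1}^{-1}=A_{i-1}^{-1}$ gives \eqref{keq4}.

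I expect the only real work to be the bookkeeping of remainders: checking that each error produced by the expansions is of the claimed order — which is exactly where {\textbf{[A3]}} and {\textbf{[A4]}} enter, to guarantee polynomial growth of the coefficients so that the $R$-calculus above applies — together with correctly matching the combinatorics of the three pairings in the $\EEt[\eta_i^2\,|\,\GG]$ computation. There is no analytic difficulty beyond Lemma~\ref{ke7} itself.
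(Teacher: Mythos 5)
Your proposal is correct and follows essentially the same route as the paper: substitute the conditional moment expansions of Lemma \ref{ke7}, read off the leading constants from $\tr(A_{i-1}^{-1}A_{i-1})=d$, $\kappa A\kappa^\TT=1_d^\TT 1_d=d$ and the three-pairing contraction $(\tr(A^{-1}A))^2+2\tr(A^{-1}AA^{-1}A)=d^2+2d$, and absorb all remainders by the $R$-calculus. No gaps.
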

}
\begin{proof}
Let us prove \eqref{keq1} to \eqref{keq4}.
We have, from \eqref{ke7-1} to \eqref{ke7-3}, 
\begin{align*}
\EEt[\eta_i|\GG]
&=\EEt\left[
\tr\left(A^{-1}_{i-1}(\alpha_0)\frac{(\DeX)^{\otimes2}}{h_n}\right)
\middle|\GG\right]
=\tr\,\EEt\left[
A^{-1}_{i-1}(\alpha_0)\frac{(\DeX)^{\otimes2}}{h_n}
\middle|\GG\right]\\
&=\tr\left(A^{-1}_{i-1}(\alpha_0)\EEt\left[
\frac{(\DeX)^{\otimes2}}{h_n}
\middle|\GG\right]\right)
=\tr\left(A^{-1}_{i-1}(\alpha_0)(A_{i-1}(\alpha_0)+\Ri)\right)\\
&=d+\Ri,
\end{align*}
\begin{align*}
&\EEt\left[\eta_i^2\middle|\GG\right]\\
&=\EEt\left[
\sum_{\ell_1,\ell_2=1}^d
\sum_{\ell_3,\ell_4=1}^d
(A_{i-1}^{-1})^{\ell_1,\ell_2}(A_{i-1}^{-1})^{\ell_3,\ell_4}(\alpha_0)
\frac{(\DeX)^{\ell_1}(\DeX)^{\ell_2}(\DeX)^{\ell_3}(\DeX)^{\ell_4}}{h_n^2}
\middle|\GG\right]\\
&=
\sum_{\ell_1,\ell_2=1}^d
\sum_{\ell_3,\ell_4=1}^d
(A_{i-1}^{-1})^{\ell_1,\ell_2}(A_{i-1}^{-1})^{\ell_3,\ell_4}(\alpha_0)
\EEt\left[
\frac{(\DeX)^{\ell_1}(\DeX)^{\ell_2}(\DeX)^{\ell_3}(\DeX)^{\ell_4}}{h_n^2}
\middle|\GG\right]\\
&=
\sum_{\ell_1,\ell_2=1}^d
\sum_{\ell_3,\ell_4=1}^d
(A_{i-1}^{-1})^{\ell_1,\ell_2}(A_{i-1}^{-1})^{\ell_3,\ell_4}
\left(A^{\ell_1,\ell_2}_{i-1}A^{\ell_3,\ell_4}_{i-1}
+A^{\ell_1,\ell_3}_{i-1}A^{\ell_2,\ell_4}_{i-1}
+A^{\ell_1,\ell_4}_{i-1}A^{\ell_2,\ell_3}_{i-1}\right)(\alpha_0)
+\Ri\\
&=\left(\tr\left(A^{-1}A\right)\right)^2
+2\tr(A^{-1}AA^{-1}A)+\Ri\\
&=d^2+2d+\Ri,
\end{align*}
\begin{align*}
\EEt\left[\xi_i\middle|\GG\right]
&=\kappa_i(\alpha_0)\EEt[\DeX
-h_nb_{i-1}(\beta_0)|\GG]
=\Rd,\\
\EEt\left[\xi_i^2\middle|\GG\right]
&=\kappa_{i-1}(\alpha_0)
\EEt\left[
\left(\DeX-h_nb_{i-1}(\beta_0)\right)
\left(\DeX-h_nb_{i-1}(\beta_0)\right)^\TT
\middle|\GG\right]\kappa_{i-1}^\TT(\alpha_0)\\
&=
\kappa_{i-1}(\alpha_0)\left(
\EEt\left[
(\DeX)^{\otimes2}
\middle|\GG\right]
-h_n\EEt\left[\DeX\middle|\GG\right]b_{i-1}^\TT(\beta_0)\right.\\
&\left.\qquad-h_nb_{i-1}(\beta_0)\EEt\left[\DeX\middle|\GG\right]^\TT
+h_n^2b_{i-1}(\beta_0)b_{i-1}^\TT(\beta_0)\right)\kappa_{i-1}^\TT(\alpha_0)\\
&=\kappa_{i-1}(\alpha_0)(h_nA_{i-1}(\alpha_0)+\Rd)\kappa_{i-1}^\TT(\alpha_0)
\\
&=h_n\kappa A\kappa^\TT+\Rd
\\
&=dh_n+\Rd,\\
\EEt\left[\zeta_i\middle|\GG\right]
&=\partial_\beta b_{i-1}(\beta_0)^\TT A_{i-1}^{-1}(\alpha_0)
\EEt\left[\DeX-h_nb_{i-1}(\beta_0)\middle|\GG\right]
=\Rd,\\
\EEt\left[\zeta_i\zeta_i^\TT\middle|\GG\right]
&=\partial_\beta b_{i-1}(\beta_0)^\TT A_{i-1}^{-1}(\alpha_0)
\EEt\left[(\DeX-h_nb_{i-1}(\beta_0))^{\otimes2}\middle|\GG\right]
A_{i-1}^{-1}(\alpha_0)\partial_\beta b_{i-1}(\beta_0)\\
&=
\partial_\beta b_{i-1}(\beta_0)^\TT A_{i-1}^{-1}(\alpha_0)
(h_nA_{i-1}^{-1}(\alpha_0)+\Rd)
A_{i-1}^{-1}(\alpha_0)\partial_\beta b_{i-1}(\beta_0)\\
&=
h_n\partial_\beta b_{i-1}(\beta_0)^\TT A_{i-1}^{-1}(\alpha_0)
\partial_\beta b_{i-1}(\beta_0)+\Rd.
\end{align*}
\end{proof}
\begin{lem}[Song and Lee, 2009]\label{s-l4.3}
Suppose that {\textbf{[A1]}}, {\textbf{[A2]}}, {\textbf{[A5]}} hold and 
a function $f$ on $\mathbb R^d\times\Theta$ 
satisfies
\begin{enumerate}
\renewcommand{\labelenumi}{(\roman{enumi})}
\item $f$ is continuous in $\theta\in\Theta$ for all $x\in\mathbb R^d$;
\item $\partial_xf$ exists and $f,\partial_xf$ are of polynomial growth 
in $x\in\mathbb R^d$ uniformly $\theta\in\Theta$.
\end{enumerate}
{ Moreover}, if $nh_n^r\lto\infty$ for some $1< r<2$, 
{ then}, under $H_0^\alpha$ (or {\textbf{[A7]}} and $H_0^\beta$), 
as $nh_n^2\lto0$,
\begin{align*}
\max_{[n^{1/r}]\le k\le n}\sup_{\theta\in\Theta}
\left\|\frac1k\sum_{i=1}^kf(X_{t_{i-1}^n},\theta)
-\int_{\mathbb R^d} f(x,\theta)\dd\mu_{\theta_0}(x)\right\|
\ato0.
\end{align*}
\end{lem}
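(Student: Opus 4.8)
The plan is to derive this uniform ergodic theorem in two stages: first, for each fixed $\theta$, to obtain convergence of $\frac1k\sum_{i=1}^kf(X_{t_{i-1}^n},\theta)$ to $\int f(x,\theta)\,\dd\mu_{\theta_0}(x)$ uniformly over $[n^{1/r}]\le k\le n$; second, to upgrade to uniformity in $\theta$ by a compactness argument combined with a truncation in $x$. The one structural fact that makes the first stage work is that, since $nh_n^r\to\infty$ with $1<r<2$, the shortest horizon in play diverges: $[n^{1/r}]h_n=(nh_n^r)^{1/r}(1+o(1))\to\infty$, so for every admissible $k$ the discretization points $0,h_n,\dots,(k-1)h_n$ fill an interval $[0,kh_n]$ of length tending to infinity. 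Under the null hypothesis in force, $X$ is the ergodic diffusion \eqref{eq2.1} driven by the fixed parameter $\theta_0$, so \textbf{[A5]} applies.

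For fixed $\theta$ I would write
\begin{align*}
\frac1k\sum_{i=1}^kf(X_{t_{i-1}^n},\theta)
=\frac1{kh_n}\int_0^{kh_n}f(X_t,\theta)\,\dd t
+\frac1{kh_n}\sum_{i=1}^k\int_{t_{i-1}^n}^{t_i^n}\!\bigl(f(X_{t_{i-1}^n},\theta)-f(X_t,\theta)\bigr)\dd t.
\end{align*}
The first term is handled by ergodicity: $\frac1T\int_0^Tf(X_t,\theta)\,\dd t\to\int f(x,\theta)\,\dd\mu_{\theta_0}(x)$ $\Pt$-a.s.\ (the integrand is $\mu_{\theta_0}$-integrable by the polynomial growth of $f$ in (ii) together with \textbf{[A5]}), whence $\sup_{T\ge[n^{1/r}]h_n}\bigl|\frac1T\int_0^Tf(X_t,\theta)\dd t-\int f(x,\theta)\dd\mu_{\theta_0}(x)\bigr|\to0$ a.s., which majorizes the first term uniformly over $k$. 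For the remainder I would bound, via (ii), $|f(X_{t_{i-1}^n},\theta)-f(X_t,\theta)|\le C(1+\|X_{t_{i-1}^n}\|+w_i)^{C}w_i$ with $w_i:=\sup_{0\le s\le h_n}\|X_{t_{i-1}^n+s}-X_{t_{i-1}^n}\|$, so the remainder is at most $\frac1k\sum_{i=1}^kC(1+\|X_{t_{i-1}^n}\|+w_i)^{C}w_i$; combining \textbf{[A2]} with the increment estimate $\EEt[w_i^q\mid\GG]\le C_qh_n^{q/2}$ (from \textbf{[A1]} and Burkholder--Davis--Gundy) and Cauchy--Schwarz gives, uniformly in $k$, $\EEt[|\mathrm{remainder}_k|^p]\le Ch_n^{p/2}$ for every $p\ge1$. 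Since $nh_n^2\to0$ forces $h_n^{p/2}=o(n^{-p/4})$, a Markov bound, a union bound over the at most $n$ values of $k$, and Borel--Cantelli with $p>8$ (permitted by \textbf{[A2]}) yield $\max_{[n^{1/r}]\le k\le n}|\mathrm{remainder}_k|\to0$ a.s., completing the fixed-$\theta$ case.

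To obtain uniformity in $\theta$, set $F_k(\theta)=\frac1k\sum_{i=1}^kf(X_{t_{i-1}^n},\theta)$ and fix $L>0$. On the compact set $\{\|x\|\le L\}\times\Theta$ the function $f$ is uniformly continuous, with some modulus $\rho_L$, while in general $|f(x,\theta)-f(x,\theta')|\le2C(1+\|x\|)^{C}\le 2C(1+L)^{-1}(1+\|x\|)^{C+1}$. Splitting the sum defining $F_k(\theta)-F_k(\theta')$ according to whether $\|X_{t_{i-1}^n}\|\le L$ yields
\begin{align*}
\max_{[n^{1/r}]\le k\le n}\ \sup_{\|\theta-\theta'\|\le\delta}\|F_k(\theta)-F_k(\theta')\|
\le\rho_L(\delta)+\frac{2C}{1+L}\,\max_{[n^{1/r}]\le k\le n}\frac1k\sum_{i=1}^k(1+\|X_{t_{i-1}^n}\|)^{C+1}.
\end{align*}
Applying the fixed-$\theta$ result to $g(x)=(1+\|x\|)^{C+1}$ (polynomial growth, $\mu_{\theta_0}$-integrable by \textbf{[A5]}), the last maximum is $\le\int g\,\dd\mu_{\theta_0}+1$ eventually a.s. Given $\varepsilon>0$ I would then choose $L$ so that $2C(1+L)^{-1}(\int g\,\dd\mu_{\theta_0}+1)<\varepsilon/3$, next $\delta$ so that $\rho_L(\delta)<\varepsilon/3$ and $\|\int f(\cdot,\theta)\dd\mu_{\theta_0}-\int f(\cdot,\theta')\dd\mu_{\theta_0}\|<\varepsilon/3$ whenever $\|\theta-\theta'\|\le\delta$ (continuity of $\theta\mapsto\int f(\cdot,\theta)\dd\mu_{\theta_0}$ from (i), (ii) and dominated convergence), cover $\Theta$ by finitely many $\delta$-balls with centers $\theta_1,\dots,\theta_N$, and invoke the fixed-$\theta$ result at each $\theta_j$. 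The resulting three-$\varepsilon$ bound gives $\max_{[n^{1/r}]\le k\le n}\sup_{\theta\in\Theta}\|F_k(\theta)-\int f(\cdot,\theta)\dd\mu_{\theta_0}\|<\varepsilon$ eventually a.s., which is the assertion.

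The step I expect to be the main obstacle is making the discretization remainder negligible \emph{almost surely and uniformly in $k$}: this is where one must spend the high moments of $X$ furnished by \textbf{[A2]} and, decisively, exploit $nh_n^2\to0$ so that the Borel--Cantelli series $\sum_n nh_n^{p/2}$ converges; the other rate condition $nh_n^r\to\infty$ serves only to push the ergodic-averaging horizon to infinity. A secondary difficulty is that $f$ is merely continuous — not Lipschitz — in $\theta$, so $\theta$-uniformity cannot come from a naive Lipschitz estimate; the truncation at level $L$ circumvents this, using the polynomial-growth bound off a compact set together with the already-established ergodic control of $\frac1k\sum_{i=1}^k(1+\|X_{t_{i-1}^n}\|)^{C+1}$.
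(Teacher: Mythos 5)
The paper does not prove this lemma at all: it is imported verbatim from Song and Lee (2009) (their Lemma 4.3), so there is no in-paper argument to compare yours against. On its own merits, your proof is correct and self-contained, and it follows the route one would expect for such a uniform law of large numbers: replace the Riemann sum by the continuous-time ergodic average (this is exactly where $nh_n^r\to\infty$ enters, since $[n^{1/r}]h_n=(nh_n^r)^{1/r}(1+o(1))\to\infty$), kill the discretization error by high moments, a union bound over $k$ and Borel--Cantelli (where $nh_n^2\to0$ is spent), and obtain $\theta$-uniformity by a finite cover of the compact $\Theta$ combined with truncation in $x$. Two points deserve an explicit sentence if you write this up. First, your uniform-continuity step on $\{\|x\|\le L\}\times\Theta$ needs \emph{joint} continuity of $f$, whereas (i) and (ii) only give separate continuity; this is rescued because (ii) makes $f$ equi-Lipschitz in $x$ uniformly in $\theta$ on compacts, and equi-Lipschitz in $x$ plus continuity in $\theta$ for each fixed $x$ does imply joint continuity, hence uniform continuity on the compact product. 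Second, the almost-sure continuous-time ergodic theorem is invoked for the process started at the deterministic point $x_0$ rather than from $\mu_{\theta_0}$; this is the standard reading of \textbf{[A5]} in this literature (positive Harris recurrence), but it is an appeal to convention rather than to the stated hypotheses. Neither issue is a genuine gap, and the claimed conditional increment bound $\EEt[w_i^q\mid\GG]\le C_qh_n^{q/2}$ should have a random constant $C(1+\|\Xs\|)^C$, which is harmless once combined with \textbf{[A2]}.
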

\begin{lem}\label{lem1}
Suppose that 
{\textbf{[A1]}}, {\textbf{[A2]}}, {\textbf{[A5]}} hold and
$f$ satisfies the conditions (i), (ii) in Lemma \ref{s-l4.3}. 
Then, 
under $H_0^\alpha$ (or {\textbf{[A7]}} and $H_0^\beta$), 
as $nh_n^2\lto0$,
\begin{align*}
\frac{1}{n}
\max_{1\le k\le n}\sup_{\theta\in\Theta}\left\|
\sum_{i=1}^kf(X_{t_{i-1}^n},\theta)-\frac kn\sum_{i=1}^n
f(X_{t_{i-1}^n},\theta)\right\|=o_p(1).
\end{align*}
\end{lem}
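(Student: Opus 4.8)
The plan is to reduce the assertion to the uniform ergodic theorem of Lemma \ref{s-l4.3} by cutting the range of $k$ at a slowly growing threshold. Fix $r\in(1,2)$ with $nh_n^r\lto\infty$ (such $r$ exists when $h_n$ is of polynomial order; see the remark after Corollary \ref{th3}), put $\kappa_n=[n^{1/r}]$, so that $\kappa_n\lto\infty$ and $\kappa_n/n\lto0$, and write $\mu f(\theta)=\int_{\mathbb R^d}f(x,\theta)\dd\mu_{\theta_0}(x)$; since $\sup_{\theta\in\Theta}\|f(x,\theta)\|\le C(1+\|x\|)^C$ and {\textbf{[A5]}} holds, $\sup_{\theta\in\Theta}\|\mu f(\theta)\|<\infty$. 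Denote by $A_n$ and $B_n$ the quantity on the left-hand side with $\max_{1\le k\le n}$ replaced by $\max_{1\le k<\kappa_n}$ and by $\max_{\kappa_n\le k\le n}$, respectively; since the full maximum is bounded by $A_n+B_n$, it suffices to prove $A_n=o_p(1)$ and $B_n=o_p(1)$.

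For the block $\kappa_n\le k\le n$ I would use the decomposition
\begin{align*}
\frac1n\left(\sum_{i=1}^kf_{i-1}(\theta)-\frac kn\sum_{i=1}^nf_{i-1}(\theta)\right)
&=\frac kn\left(\frac1k\sum_{i=1}^kf_{i-1}(\theta)-\mu f(\theta)\right)\\
&\quad-\frac kn\left(\frac1n\sum_{i=1}^nf_{i-1}(\theta)-\mu f(\theta)\right).
\end{align*}
Since $0\le k/n\le1$, taking $\sup_{\theta\in\Theta}$ and then $\max_{\kappa_n\le k\le n}$ gives
\begin{align*}
B_n&\le\max_{\kappa_n\le k\le n}\sup_{\theta\in\Theta}\left\|\frac1k\sum_{i=1}^kf_{i-1}(\theta)-\mu f(\theta)\right\|\\
&\quad+\sup_{\theta\in\Theta}\left\|\frac1n\sum_{i=1}^nf_{i-1}(\theta)-\mu f(\theta)\right\|.
\end{align*}
By Lemma \ref{s-l4.3} with this $r$ (it applies both under $H_0^\alpha$ and under {\textbf{[A7]}} with $H_0^\beta$) the first term tends to $0$ almost surely, and the second term is the $k=n$ instance of the same estimate; hence $B_n\pto0$.

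For the block $1\le k<\kappa_n$ the crude bound
\begin{align*}
A_n\le\frac1n\sum_{i=1}^{\kappa_n}\sup_{\theta\in\Theta}\|f_{i-1}(\theta)\|
+\frac{\kappa_n}{n}\sup_{\theta\in\Theta}\left\|\frac1n\sum_{i=1}^nf_{i-1}(\theta)\right\|
\end{align*}
does the job. By the polynomial growth of $f$ and {\textbf{[A2]}} one has $\EEt\big[\sup_{\theta\in\Theta}\|f(\Xs,\theta)\|\big]\le C'$ uniformly in $i$, so the expectation of the first term is at most $C'\kappa_n/n\lto0$ and it is $o_p(1)$ by Markov's inequality; for the second term, $\sup_{\theta\in\Theta}\|\frac1n\sum_{i=1}^nf_{i-1}(\theta)\|\le\sup_{\theta\in\Theta}\|\mu f(\theta)\|+o_{\mathrm{a.s.}}(1)=O_p(1)$ by Lemma \ref{s-l4.3}, so it is $(\kappa_n/n)O_p(1)=o_p(1)$. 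Combining the two blocks gives the claim.

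All the real work sits in Lemma \ref{s-l4.3}; the only place it cannot be invoked is the initial segment $1\le k<\kappa_n$, where the argument instead rests on {\textbf{[A2]}} and the uniform polynomial growth of $f$, and it goes through precisely because the cutoff $\kappa_n=[n^{1/r}]$ with $r\in(1,2)$ is of order $o(n)$. Thus the only (mild) obstacle is the choice of $\kappa_n$: Lemma \ref{s-l4.3} forces $\kappa_n$ to be at least of order $n^{1/r}$ for the large block, while control of the small block needs $\kappa_n=o(n)$, and $[n^{1/r}]$ simultaneously satisfies both.
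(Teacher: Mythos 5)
Your proof is correct and follows essentially the same route as the paper's: split the maximum at the cutoff $[n^{1/r}]$ with $1<r<2$ such that $nh_n^r\lto\infty$, control the initial block by the crude bound $\frac{[n^{1/r}]}{n}\cdot O_p(1)$ using the polynomial growth of $f$ and \textbf{[A2]}/Lemma \ref{s-l4.3}, and reduce the tail block to the uniform law of large numbers of Lemma \ref{s-l4.3} via centering at $\int f\,\dd\mu_{\theta_0}$. The only cosmetic difference is that the paper bounds the tail block by $2\max_{[n^{1/r}]\le k\le n}\sup_\theta\|\frac1k\sum_{i=1}^k f_{i-1}(\theta)-\int f\,\dd\mu_{\theta_0}\|$ in one step rather than treating the $k=n$ term separately.
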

\begin{proof}
If $nh_n\lto\infty,\ nh_n^2\lto0$, there exists $1< r<2$ such that 
$nh_n^r\lto\infty$. 
Then, { it follows from Lemma \ref{s-l4.3} that}
\begin{align*}
&\max_{1\le k\le [n^{1/r}]}\sup_{\theta\in\Theta}
\frac1n\left\|\sum_{i=1}^kf_{i-1}(\theta)
-\frac kn\sum_{i=1}^nf_{i-1}(\theta)
\right\|\\
&\le\frac1n\left(
\sum_{i=1}^{[n^{1/r}]}\sup_{\theta\in\Theta}\|f_{i-1}(\theta)\|
+\frac{[n^{1/r}]}{n}\sum_{i=1}^{n}\sup_{\theta\in\Theta}\|f_{i-1}(\theta)\|
\right)\\
&=\frac{[n^{1/r}]}n\left(
\frac{1}{[n^{1/r}]}\sum_{i=1}^{[n^{1/r}]}\sup_{\theta\in\Theta}\|f_{i-1}(\theta)\|
+\frac{1}{n}\sum_{i=1}^{n}\sup_{\theta\in\Theta}\|f_{i-1}(\theta)\|
\right)
=o_p(1)
\end{align*}
and
\begin{align*}
&\max_{[n^{1/r}]\le k\le n}\sup_{\theta\in\Theta}
\frac1n\left\|\sum_{i=1}^kf_{i-1}(\theta)
-\frac kn\sum_{i=1}^nf_{i-1}(\theta)
\right\|\\
&=\max_{[n^{1/r}]\le k\le n}\sup_{\theta\in\Theta}
\frac kn\left\|\frac1k\sum_{i=1}^kf_{i-1}(\theta)
-\frac 1n\sum_{i=1}^nf_{i-1}(\theta)
\right\|\\
&\le\max_{[n^{1/r}]\le k\le n}\sup_{\theta\in\Theta}
\left\|\frac1k\sum_{i=1}^kf_{i-1}(\theta)
-\int_{\mathbb R^d} f(x,\theta)\dd\mu_{\theta_0}(x)
-\frac 1n\sum_{i=1}^nf_{i-1}(\theta)
+\int_{\mathbb R^d} f(x,\theta)\dd\mu_{\theta_0}(x)
\right\|\\
&\le2\max_{[n^{1/r}]\le k\le n}\sup_{\theta\in\Theta}
\left\|\frac1k\sum_{i=1}^kf_{i-1}(\theta)
-\int_{\mathbb R^d} f(x,\theta)\dd\mu_{\theta_0}(x)
\right\|
\ato0.
\end{align*}
Therefore, we obtain 
\begin{align*}
&\frac{1}{n}
\max_{1\le k\le n}\sup_{\theta\in\Theta}\left\|
\sum_{i=1}^kf_{i-1}(\theta)-\frac kn\sum_{i=1}^n
f_{i-1}(\theta)\right\|\\
&\le\frac{1}{n}
\max_{1\le k\le [n^{1/r}]}\sup_{\theta\in\Theta}\left\|
\sum_{i=1}^kf_{i-1}(\theta)-\frac kn\sum_{i=1}^n
f_{i-1}(\theta)\right\|
+\frac{1}{n}
\max_{[n^{1/r}]\le k\le n}\sup_{\theta\in\Theta}\left\|
\sum_{i=1}^kf_{i-1}(\theta)-\frac kn\sum_{i=1}^n
f_{i-1}(\theta)\right\|\\
&=o_p(1).
\end{align*}
\end{proof}
\begin{lem}\label{lem2}
Suppose that 
{\textbf{[A1]}}-{\textbf{[A5]}} hold and
$f$ satisfies the conditions (i), (ii) in Lemma \ref{s-l4.3}. 
Then, 
under $H_0^\alpha$ (or {\textbf{[A7]}} and $H_0^\beta$), as $nh_n^2\lto0$,
\begin{align}
&\frac{1}{nh_n}
\max_{1\le k\le n}\left\|
\sum_{i=1}^kf(X_{t_{i-1}^n},\theta_0)(\DeX)^\ell
-\frac kn\sum_{i=1}^nf(X_{t_{i-1}^n},\theta_0)(\DeX)^\ell
\right\|=o_p(1),\label{cx1}\\
&\frac{1}{nh_n}
\max_{1\le k\le n}\left\|
\sum_{i=1}^kf(X_{t_{i-1}^n},\theta_0)(\DeX)^{\ell_1}(\DeX)^{\ell_2}
-\frac kn\sum_{i=1}^nf(X_{t_{i-1}^n},\theta_0)(\DeX)^{\ell_1}(\DeX)^{\ell_2}
\right\|=o_p(1).\label{cx2}
\end{align}
\end{lem}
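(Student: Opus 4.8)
The plan is to prove \eqref{cx1} and \eqref{cx2} by a single scheme: write each increment factor as its $\GG$-conditional mean (given by Kessler's expansion, Lemma \ref{ke7}) plus a martingale difference, and exploit the fact that the centred partial sum $k\mapsto\sum_{i=1}^k a_i-\frac kn\sum_{i=1}^n a_i$ is linear in $(a_i)$, so that the three resulting contributions may be bounded separately. For \eqref{cx1}, Lemma \ref{ke7} gives $(\DeX)^\ell=h_nb_{i-1}^\ell(\beta_0)+\Rd+M_i^\ell$ with $M_i^\ell:=(\DeX)^\ell-\EEt[(\DeX)^\ell|\GG]$; for \eqref{cx2}, similarly $(\DeX)^{\ell_1}(\DeX)^{\ell_2}=h_nA_{i-1}^{\ell_1,\ell_2}(\alpha_0)+\Rd+N_i$ with $N_i:=(\DeX)^{\ell_1}(\DeX)^{\ell_2}-\EEt[(\DeX)^{\ell_1}(\DeX)^{\ell_2}|\GG]$. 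It then suffices to bound, for each of the ``drift'', ``remainder'' and ``martingale'' contributions, $(nh_n)^{-1}$ times its centred partial sum, uniformly in $k$.

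For the drift contribution in \eqref{cx1} the summand is $h_nf(\Xs,\theta_0)b_{i-1}^\ell(\beta_0)$; since $x\mapsto f(x,\theta)b^\ell(x,\beta)$ again satisfies conditions (i) and (ii) of Lemma \ref{s-l4.3} (by \textbf{[A4]}, a product of polynomially growing functions being polynomially growing), pulling the $h_n$ outside cancels the $(nh_n)^{-1}$ prefactor and leaves exactly the quantity controlled by Lemma \ref{lem1}, which is $o_p(1)$. For the remainder contribution $f(\Xs,\theta_0)\Rd$, I would bound its centred partial sum crudely by $2\sum_{i=1}^n|f(\Xs,\theta_0)|\cdot|\Rd|\le 2h_n^2\sum_{i=1}^n g(\Xs)$ for some $g$ of polynomial growth; dividing by $nh_n$ gives $2h_n\cdot\frac1n\sum_{i=1}^n g(\Xs)=h_n\cdot O_p(1)=o_p(1)$, where $\frac1n\sum_{i=1}^n g(\Xs)=O_p(1)$ by \textbf{[A2]} and Markov's inequality.

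The step I expect to be the main point is the martingale contribution $m_i:=f(\Xs,\theta_0)M_i^\ell$. These are martingale differences with respect to $(\mathscr G_k^n)_k$, so $\{S_k:=\sum_{i=1}^k m_i\}_{k\le n}$ is a martingale, and since $\max_{1\le k\le n}\left|S_k-\frac kn S_n\right|\le 2\max_{1\le k\le n}|S_k|$, Doob's $L^2$ maximal inequality yields $\EEt\big[\max_{1\le k\le n}|S_k|^2\big]\le 4\sum_{i=1}^n\EEt[m_i^2]$. From Lemma \ref{ke7}, $\EEt[(M_i^\ell)^2|\GG]=\EEt[((\DeX)^\ell)^2|\GG]-\big(\EEt[(\DeX)^\ell|\GG]\big)^2=h_nA_{i-1}^{\ell,\ell}(\alpha_0)+\Rd$, so $\EEt[m_i^2]=\EEt[f(\Xs,\theta_0)^2\EEt[(M_i^\ell)^2|\GG]]\le Ch_n$ by \textbf{[A2]} and polynomial growth, whence $\sum_{i=1}^n\EEt[m_i^2]\le Cnh_n$. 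Therefore $\EEt\big[(nh_n)^{-2}\max_{1\le k\le n}|S_k|^2\big]\le 4C/(nh_n)\to 0$ since $nh_n\lto\infty$, and the martingale contribution is $o_p(1)$ by Chebyshev's inequality. This is the only place the divergence $nh_n\lto\infty$ enters, and it is precisely what makes the cusum of the martingale part vanish at rate $(nh_n)^{-1}$. Combining the three contributions gives \eqref{cx1}.

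For \eqref{cx2} the argument is verbatim with $N_i$ in place of $M_i^\ell$ and the drift function being $x\mapsto f(x,\theta)A^{\ell_1,\ell_2}(x,\alpha)$; the only substantive change occurs in the martingale variance, where \eqref{ke7-3} gives $\EEt[N_i^2|\GG]=h_n^2\big(A_{i-1}^{\ell_1,\ell_1}A_{i-1}^{\ell_2,\ell_2}+(A_{i-1}^{\ell_1,\ell_2})^2\big)(\alpha_0)+\Rt$, which is $O(h_n^2)$ with polynomial growth, so $\sum_{i=1}^n\EEt[(f(\Xs,\theta_0)N_i)^2]\le Cnh_n^2$ and the corresponding martingale part is even $O_p(n^{-1/2})$; the drift and remainder parts are handled exactly as above, and no new difficulty arises.
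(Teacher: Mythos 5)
Your proof is correct and follows essentially the same route as the paper: decompose the increments via Kessler's conditional moment expansions into a drift part handled by Lemma \ref{lem1}, a remainder of order $h_n^2$ handled crudely, and a martingale part controlled by an $L^2$ maximal inequality with variance bound $O(nh_n)$ (resp.\ $O(nh_n^2)$), so that $nh_n\lto\infty$ closes the argument. The only cosmetic differences are that you invoke Doob's inequality where the paper cites Theorem 2.11 of Hall and Heyde, and you apply Lemma \ref{lem1} directly to the centred cusum of $f_{i-1}b_{i-1}^{\ell}$ rather than routing through the ergodic limit $\int f b^{\ell}\,\dd\mu_{\theta_0}$ as in the paper's step \eqref{po2}.
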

\begin{proof}
If we prove
\begin{align}
&\frac1n\max_{1\le k\le n}\left\|
\sum_{i=1}^kf_{i-1}(\theta_0)\frac{(\DeX)^{\ell}}{h_n}
-\sum_{i=1}^k
\EEt\left[f_{i-1}(\theta_0)\frac{(\DeX)^{\ell}}{h_n}\middle|\GG\right]\right\|
=o_p(1),
\label{po1}\\
&\frac1n
\max_{1\le k\le n}\left\|
\sum_{i=1}^kf_{i-1}(\theta_0)\EEt\left[\frac{(\DeX)^\ell}{h_n}\middle|\GG\right]
-k\int_{\mathbb R^d}f(x,\theta_0)b^{\ell}(x,\beta_0)\dd\mu_{\theta_0}(x)
\right\|=o_p(1),
\label{po2}
\end{align}
then we have
\begin{align*}
&\frac{1}{nh_n}
\max_{1\le k\le n}\left\|
\sum_{i=1}^kf_{i-1}(\theta_0)(\DeX)^\ell
-\frac kn\sum_{i=1}^nf_{i-1}(\theta_0)(\DeX)^\ell
\right\|\\
&\le\frac1n
\max_{1\le k\le n}\left\|
\sum_{i=1}^kf_{i-1}(\theta_0)\frac{(\DeX)^\ell}{h_n}
-k\int_{\mathbb R^d}f(x,\theta_0)b^\ell(x,\beta_0)\dd\mu_{\theta_0}(x)
\right\|\\
&\qquad+
\frac1n
\max_{1\le k\le n}\left\|
k\int_{\mathbb R^d}f(x,\theta_0)b^\ell(x,\beta_0)\dd\mu_{\theta_0}(x)
-\frac kn\sum_{i=1}^nf_{i-1}(\theta_0)\frac{(\DeX)^\ell}{h_n}
\right\|\\
&\le\frac1n
\max_{1\le k\le n}\left\|
\sum_{i=1}^kf_{i-1}(\theta_0)\frac{(\DeX)^\ell}{h_n}
-k\int_{\mathbb R^d}f(x,\theta_0)b^\ell(x,\beta_0)\dd\mu_{\theta_0}(x)
\right\|\\
&\qquad+
\frac1n
\left\|
n\int_{\mathbb R^d}f(x,\theta_0)b^\ell(x,\beta_0)\dd\mu_{\theta_0}(x)
-\sum_{i=1}^nf_{i-1}(\theta_0)\frac{(\DeX)^\ell}{h_n}
\right\|\\
&\le\frac2n
\max_{1\le k\le n}\left\|
\sum_{i=1}^kf_{i-1}(\theta_0)\frac{(\DeX)^\ell}{h_n}
-k\int_{\mathbb R^d}f(x,\theta_0)b^\ell(x,\beta_0)\dd\mu_{\theta_0}(x)
\right\|
=o_p(1).
\end{align*}
Hence, we obtain \eqref{cx1}.
Let us prove \eqref{po1} and \eqref{po2}.

\textit{Proof of \eqref{po1}.}
Set
\begin{align*}
\mathcal T_i=f_{i-1}(\theta_0)\left(
\frac{(\DeX)^{\ell}}{h_n}
-\EEt\left[
\frac{(\DeX)^{\ell}}{h_n}
\middle|\GG\right]\right),\quad 
\mathcal M_k=\sum_{i=1}^k\mathcal T_i.
\end{align*}
Note that, from Lemma \ref{ke7}, 
$\EEt[\|\mathcal T_i\|^2|\GG]=R_{i-1}(h_n^{-1},\theta)$.
It follows from
Theorem 2.11 of Hall and Heyde (1980) that
\begin{align*}
\frac{1}{n^2}\EEt\left[
\max_{1\le k\le n}\|\mathcal M_k\|^2
\right]
\le\frac{C}{n^2}
\left(
\EEt\left[\sum_{i=1}^n\EEt[\|\mathcal T_i\|^2|\GG]\right]
+\EEt\left[\max_{1\le k\le n}\|\mathcal T_k\|^2\right]
\right)
\le\frac{C'}{nh_n}\lto0.
\end{align*}
Hence, we obtain
\begin{align*}
&\frac1n\max_{1\le k\le n}\left\|
\sum_{i=1}^kf_{i-1}(\theta_0)\frac{(\DeX)^{\ell}}{h_n}
-\sum_{i=1}^k
\EEt\left[f_{i-1}(\theta_0)\frac{(\DeX)^{\ell}}{h_n}\middle|\GG\right]\right\|\\
&=
\frac1n\max_{1\le k\le n}\|\mathcal M_k\|
\pto0.
\end{align*} 

\textit{Proof of \eqref{po2}.} 
We have, from Lemmas $\ref{ke7}$-$\ref{lem1}$,
\begin{align*}
&\frac1n
\max_{1\le k\le n}\left\|
\sum_{i=1}^kf_{i-1}(\theta_0)\EEt\left[\frac{(\DeX)^\ell}{h_n}\middle|\GG\right]
-k\int_{\mathbb R^d}f(x,\theta_0)b^{\ell}(x,\beta_0)\dd\mu_{\theta_0}(x)
\right\|\\
&=
\frac1n
\max_{1\le k\le n}\left\|
\sum_{i=1}^kf_{i-1}(\theta_0)(b_{i-1}^\ell(\beta_0)+\Ri)
-k\int_{\mathbb R^d}f(x,\theta_0)b^\ell(x,\beta_0)\dd\mu_{\theta_0}(x)
\right\|\\
&\le
\frac1n
\max_{1\le k\le n}\left\|
\sum_{i=1}^kf_{i-1}(\theta_0)b_{i-1}^\ell(\beta_0)
-k\int_{\mathbb R^d}f(x,\theta_0)b^\ell(x,\beta_0)\dd\mu_{\theta_0}(x)
\right\|\\
&\qquad+
\frac1n
\max_{1\le k\le n}\left\|
\sum_{i=1}^k\Ri
\right\|\\
&\le
\frac1n
\max_{1\le k\le n}\left\|
\sum_{i=1}^kf_{i-1}(\theta_0)b_{i-1}^\ell(\beta_0)
-\frac{k}{n}\sum_{i=1}^nf_{i-1}(\theta_0)b_{i-1}^\ell(\beta_0)
\right\|\\
&\qquad+\frac1n
\max_{1\le k\le n}\left\|
\frac{k}{n}\sum_{i=1}^nf_{i-1}(\theta_0)b_{i-1}^\ell(\beta_0)
-k\int_{\mathbb R^d}f(x,\theta_0)b^\ell(x,\beta_0)\dd\mu_{\theta_0}(x)
\right\|
+o_p(1)\\
&\le
\frac1n
\max_{1\le k\le n}\left\|
\sum_{i=1}^kf_{i-1}(\theta_0)b_{i-1}^\ell(\beta_0)
-\frac{k}{n}\sum_{i=1}^nf_{i-1}(\theta_0)b_{i-1}^\ell(\beta_0)
\right\|\\
&\qquad+
\left\|
\frac{1}{n}\sum_{i=1}^nf_{i-1}(\theta_0)b_{i-1}^\ell(\beta_0)
-\int_{\mathbb R^d}f(x,\theta_0)b^\ell(x,\beta_0)\dd\mu_{\theta_0}(x)
\right\|
+o_p(1)\\
&=o_p(1).
\end{align*}
This completes the proof of \eqref{cx1}.
In the same way, we have \eqref{cx2}.
\end{proof}


\begin{proof}[\bf{Proof of Theorem \ref{th1}}]
Let
\begin{align*}
\eta_i=\tr\left(A^{-1}_{i-1}(\alpha_0)
\frac{(\Delta X_i)^{\otimes2}}{h_n}\right)
=\sum_{\ell_1,\ell_2=1}^d\left(A_{i-1}^{-1}(\alpha_0)\right)^{\ell_1,\ell_2}
\frac{(\DeX)^{\ell_1}(\DeX)^{\ell_2}}{h_n}.
\end{align*}
By Taylor expansion, we have
\begin{align*}
(A_{i-1}^{-1}(\hat\alpha_n))^{\ell_1,\ell_2}
=
(A_{i-1}^{-1}(\alpha_0))^{\ell_1,\ell_2}
+\partial_\alpha(A_{i-1}^{-1}(\alpha_0))^{\ell_1,\ell_2}(\hat\alpha_n-\alpha_0)
+(\hat\alpha_n-\alpha_0)^\TT\Aa(\hat\alpha_n-\alpha_0),
\end{align*}
where 
\begin{align*}
\Aa=\int_0^1(1-u)\partial_\alpha^2
\left(A_{i-1}^{-1}(\alpha_0+u(\hat\alpha_n-\alpha_0)\right)^{\ell_1,\ell_2}\dd u.
\end{align*}
Then, we can express 
\begin{align*}
\hat\eta_i&=\sum_{\ell_1,\ell_2=1}^d
\left(A^{-1}_{i-1}(\hat\alpha_n)\right)^{\ell_1,\ell_2}
\frac{(\DeX)^{\ell_1}(\DeX)^{\ell_2}}{h_n}\\
&=
\sum_{\ell_1,\ell_2=1}^d
\left(
\left(A^{-1}_{i-1}(\alpha_0)\right)^{\ell_1,\ell_2}
+\partial_\alpha\left(A^{-1}_{i-1}(\alpha_0)\right)^{\ell_1,\ell_2}
(\hat\alpha_n-\alpha_0)
+(\hat\alpha_n-\alpha_0)^\TT \Aa(\hat\alpha_n-\alpha_0)\right)
\frac{(\DeX)^{\ell_1}(\DeX)^{\ell_2}}{h_n}\\
&=\eta_i
+\left(\frac1{\sqrt n}\sum_{\ell_1,\ell_2=1}^d
\partial_\alpha\left(A^{-1}_{i-1}(\alpha_0)\right)^{\ell_1,\ell_2}
\frac{(\DeX)^{\ell_1}(\DeX)^{\ell_2}}{h_n}\right)
\sqrt{n}(\hat\alpha_n-\alpha_0)\\
&\qquad+\left(\sqrt n(\hat\alpha_n-\alpha_0)\right)^\TT
\left(
\frac1n\sum_{\ell_1,\ell_2=1}^d
\Aa\frac{(\DeX)^{\ell_1}(\DeX)^{\ell_2}}{h_n}
\right)
\sqrt n(\hat\alpha_n-\alpha_0)\\
&=:\eta_i+\frac{1}{\sqrt n}\Hi\left(\sqrt{n}(\hat\alpha_n-\alpha_0)\right)
+
\frac1n\left(\sqrt n(\hat\alpha_n-\alpha_0)\right)^\TT
\Hd\left(\sqrt n(\hat\alpha_n-\alpha_0)\right).
\end{align*}
Therefore, it is enough to show
\begin{align}
\frac{1}{\sqrt{2dn}}\max_{1\le k\le n}
\left|\sum_{i=1}^k\eta_i-\frac{k}{n}\sum_{i=1}^n\eta_i\right|&\dto
\sup_{0\le s\le 1}| { \boldsymbol B_1^0(s)} | ,
\label{ap1}\\
\frac1{n}\max_{1\le k\le n}
\left\|
\sum_{i=1}^k\Hi
-\frac kn\sum_{i=1}^n\Hi
\right\|&=o_p(1),
\label{ap2}\\
\frac1{n^{3/2}}\max_{1\le k\le n}
\left\|\sum_{i=1}^k\Hd
-\frac kn\sum_{i=1}^n\Hd
\right\|&=o_p(1).
\label{ap3}
\end{align}

\textit{Proof of \eqref{ap1}.} 
Let $\boldsymbol B_d(t)$ be a $d$-dimensional standard Brownian motion.
$X_n(\cdot)\wto X(\cdot)$ in $\mathbb D[0,1]$ denotes that 
$X_n(\cdot)$ weakly converges to $X(\cdot)$ in the Skorohod space on $[0,1]$.
If we prove
\begin{align}\label{bm1}
\mathcal U_n(s):=\frac{1}{\sqrt{2dn}}\sum_{i=1}^{[ns]}(\eta_i-d)
\wto { \boldsymbol B_1(s)} \quad\text{in }\mathbb D[0,1],
\end{align}
then, it follows from the continuous mapping theorem that
\begin{align*}
\frac{1}{\sqrt{2dn}}\max_{1\le k\le n}
\left|\sum_{i=1}^k\eta_i-\frac{k}{n}\sum_{i=1}^n\eta_i\right|
&=
\frac{1}{\sqrt{2dn}}\max_{1\le k\le n}
\left|\sum_{i=1}^k(\eta_i-d)-\frac{k}{n}\sum_{i=1}^n(\eta_i-d)\right|\\
&=
\max_{1\le k\le n}\left|\frac{1}{\sqrt{2dn}}\sum_{i=1}^k(\eta_i-d)
-\frac{k}{n}\frac{1}{\sqrt{2dn}}\sum_{i=1}^n(\eta_i-d)\right|\\
&=
\sup_{0\le s\le 1}\left|\frac{1}{\sqrt{2dn}}\sum_{i=1}^{[ns]}(\eta_i-d)
-\frac{[ns]}{n}\frac{1}{\sqrt{2dn}}\sum_{i=1}^n(\eta_i-d)\right|\\
&=
\sup_{0\le s\le 1}\left|\mathcal U_n(s)
-\frac{[ns]}{n}\mathcal U_n(1)\right|\\
&\dto
\sup_{0\le s\le 1} | { \boldsymbol{B}_1(s) - s \boldsymbol{B}_1(1) } |   
=
\sup_{0\le s\le 1}| { \boldsymbol B_1^0(s)} |.
\end{align*}


Let us prove \eqref{bm1}.
It is enough to show
\begin{align}
\frac{1}{\sqrt{2dn}}\sum_{i=1}^{[ns]}
\left(
\eta_i-d-\EEt\left[
\eta_i-d\middle|\GG\right]
\right)
&\wto  { \boldsymbol B_1(s)}  \quad{\text{in }}\mathbb D[0,1],\label{nm1}\\
\frac{1}{\sqrt{n}}\sum_{i=1}^{n}
\EEt\left[\eta_i-d
\middle|\GG\right]&=o_p(1).\label{nm2}
\end{align}
From Lemma \ref{ke7},
\begin{align*}
\frac{1}{\sqrt{n}}\sum_{i=1}^{n}
\EEt\left[\eta_i-d
\middle|\GG\right]
=\frac{1}{\sqrt{n}}\sum_{i=1}^{n}\Ri
=\sqrt{nh_n^2}\cdot\frac{1}{n}\sum_{i=1}^{n}\Ro
=o_p(1).
\end{align*}
Hence we have \eqref{nm2}.

According to Corollary 3.8 of McLeish (1974),  
we obtain \eqref{nm1} if we prove
\begin{align}
\frac{1}{\sqrt n}\sum_{i=1}^{[ns]}
\left|\EEt\left[
(\eta_i-d)-\EEt\left[
\eta_i-d
\middle|\GG\right]
\middle|\GG\right]\right|&=o_p(1),
\label{gh1}\\
\frac{1}{2dn}\sum_{i=1}^{[ns]}
\EEt\left[
\left(
(\eta_i-d)-\EEt\left[
\eta_i-d\middle|\GG\right]
\right)^2\middle|\GG\right]&\pto s,
\label{gh2}\\
\frac{1}{n^2}\sum_{i=1}^{[ns]}
\EEt\left[
\left(
(\eta_i-d)-\EEt\left[
\eta_i-d\middle|\GG\right]
\right)^4\middle|\GG\right]&=o_p(1)
\label{gh3}
\end{align}
for all $s\in[0,1]$.

\eqref{gh1} is obvious.
From Lemma \ref{ke7}, we have
\begin{align*}
\EEt\left[
\left(
(\eta_i-d)-\EE_{\theta_0}\left[
\eta_i-d\middle|\GG\right]
\right)^2\middle|\GG\right]
&=\EEt\left[
\left(
\eta_i-d+\Ri
\right)^2\middle|\GG\right]\\
&=
\EEt\left[
\eta_i^2-2d\eta_i+d^2+\Ri
\middle|\GG\right]\\
&=
\EEt\left[\eta_i^2\middle|\GG\right]
-2d\EEt\left[\eta_i\middle|\GG\right]
+d^2+\Ri\\
&=2d+\Ri
\end{align*}
and
\begin{align*}
\frac{1}{2dn}\sum_{i=1}^{[ns]}\EEt\left[\left(
(\eta_i-d)
-\EEt\left[
\eta_i-d\middle|\GG\right]
\right)^2\middle|\GG\right]
=\frac{[ns]}{n}\frac{1}{2d[ns]}\sum_{i=1}^{[ns]}(2d+\Ri)
\pto s.
\end{align*}
Hence \eqref{gh2} holds.
Since $\EEt[\eta_i^4|\GG]=\Ro$,
\begin{align*}
\frac{1}{n^2}\sum_{i=1}^{[ns]}
\EEt\left[
\left(
\eta_i-d
-\EEt\left[
\eta_i-d\middle|\GG\right]
\right)^4\middle|\GG\right]
&\le\frac{C}{n^2}\sum_{i=1}^{[ns]}
\EEt\left[
\eta_i^4+d^4+\Rf
\middle|\GG\right]\\
&=\frac1n\cdot\frac1n\sum_{i=1}^{[ns]}\Ro=o_p(1).
\end{align*}
Therefore we obtain \eqref{gh3}. 
This completes the proof of \eqref{ap1}.


\textit{Proof of \eqref{ap2}.}
Noting that 
\begin{align*}
\Hi=
\sum_{\ell_1,\ell_2=1}^d
\partial_\alpha\left(A^{-1}_{i-1}(\alpha_0)\right)^{\ell_1,\ell_2}
\frac{(\DeX)^{\ell_1}(\DeX)^{\ell_2}}{h_n},
\end{align*}
we have, by using Lemma \ref{lem2},
\begin{align*}
&\frac1{n}\max_{1\le k\le n}
\left\|
\sum_{i=1}^k\Hi
-\frac kn\sum_{i=1}^n\Hi
\right\|\\
&\le
\sum_{\ell_1,\ell_2=1}^d
\frac1{nh_n}\max_{1\le k\le n}
\left\|
\sum_{i=1}^k
\partial_\alpha\left(A^{-1}_{i-1}(\alpha_0)\right)^{\ell_1,\ell_2}
(\DeX)^{\ell_1}(\DeX)^{\ell_2}
-\frac kn\sum_{i=1}^n
\partial_\alpha\left(A^{-1}_{i-1}(\alpha_0)\right)^{\ell_1,\ell_2}
(\DeX)^{\ell_1}(\DeX)^{\ell_2}
\right\|\\
&=o_p(1).
\end{align*}

\textit{Proof of \eqref{ap3}.} 
Since $\alpha_0\in\mathrm{Int\,}\Theta_A$, 
there exists an open neighborhood $\mathcal O_{\alpha_0}$ of $\alpha_0$ such that 
$\mathcal O_{\alpha_0}\subset\Theta_A$. 
If $\hat\alpha_n\in\mathcal O_{\alpha_0}$, then
\begin{align*}
\|\Hd\|
=\left\|
\sum_{\ell_1,\ell_2=1}^d\Aa\frac{(\DeX)^{\ell_1}(\DeX)^{\ell_2}}{h_n}
\right\|
\le
\sum_{\ell_1,\ell_2=1}^d
\sup_{\alpha\in\Theta_A}\|\partial_\alpha^2(A_{i-1}^{-1}(\alpha))^{\ell_1,\ell_2}\|
\left|\frac{(\DeX)^{\ell_1}(\DeX)^{\ell_2}}{h_n}\right|
\end{align*}
and
\begin{align*}
&\EEt\left[
\frac{1}{n^{3/2}}\sum_{i=1}^n\sum_{\ell_1,\ell_2=1}^d
\sup_{\alpha\in\Theta_A}\left\|
\partial_\alpha^2(A^{-1}_{i-1}(\alpha))^{\ell_1,\ell_2}
\right\|
\left|\frac{(\DeX)^{\ell_1}(\DeX)^{\ell_2}}{h_n}\right|
\right]\\
&\le
\frac{1}{n^{3/2}}\sum_{i=1}^n\sum_{\ell_1,\ell_2=1}^d\EEt\left[
\sup_{\alpha\in\Theta_A}\left\|
\partial_\alpha^2(A^{-1}_{i-1}(\alpha))^{\ell_1,\ell_2}
\right\|^2\right]^{1/2}
\EEt\left[
\left|\frac{(\DeX)^{\ell_1}(\DeX)^{\ell_2}}{h_n}\right|^2
\right]^{1/2}\\
&\le 
\frac{C}{n^{1/2}}\lto0.
\end{align*}
Hence, from {\textbf{[A6]}}, we have 
$\Pt(\hat\alpha_n\in\mathcal O_{\alpha_0})\lto1$ and, for all $\epsilon>0$, 
\begin{align*}
&\Pt\left(
\frac1{n^{3/2}}\sum_{i=1}^n
\|\Hd\|
\ge\epsilon
\right)\\
&\le
\Pt\left(
\left\{\frac1{n^{3/2}}\sum_{i=1}^n
\|\Hd\|
\ge\epsilon
\right\}
\cap \left\{\hat\alpha_n\in\mathcal O_{\alpha_0}\right\}
\right)
+
\Pt\left(
\left\{
\frac1{n^{3/2}}\sum_{i=1}^n\|\Hd\|
\ge\epsilon
\right\}
\cap \left\{\hat\alpha_n\notin\mathcal O_{\alpha_0}\right\}
\right)\\
&\le
\Pt\left(
\left\{
\frac{1}{n^{3/2}}\sum_{i=1}^n\sum_{\ell_1,\ell_2=1}^d
\sup_{\alpha\in\Theta_A}\left\|
\partial_\alpha^2(A^{-1}_{i-1}(\alpha))^{\ell_1,\ell_2}
\right\|
\left|\frac{(\DeX)^{\ell_1}(\DeX)^{\ell_2}}{h_n}\right|
\ge\epsilon
\right\}
\cap \left\{\hat\alpha_n\in\mathcal O_{\alpha_0}\right\}
\right)\\
&\qquad+
\Pt\left(
\hat\alpha_n\notin\mathcal O_{\alpha_0}
\right)\\
&\le
\Pt\left(
\frac{1}{n^{3/2}}\sum_{i=1}^n\sum_{\ell_1,\ell_2=1}^d
\sup_{\alpha\in\Theta_A}\left\|
\partial_\alpha^2(A^{-1}_{i-1}(\alpha))^{\ell_1,\ell_2}
\right\|
\left|\frac{(\DeX)^{\ell_1}(\DeX)^{\ell_2}}{h_n}\right|
\ge\epsilon
\right)
+
\Pt\left(
\hat\alpha_n\notin\mathcal O_{\alpha_0}
\right)\\
&\le\frac{1}{\epsilon}
\frac{C}{n^{1/2}}+\Pt\left(\hat\alpha_n\notin\mathcal O_{\alpha_0}\right)
\lto0.
\end{align*}
Therefore we obtain 
\begin{align*}
\frac1{n^{3/2}}\sum_{i=1}^n\|\Hd\|=o_p(1)
\end{align*}
and
\begin{align*}
\frac1{n^{3/2}}\max_{1\le k\le n}
\left\|\sum_{i=1}^k\Hd
-\frac kn\sum_{i=1}^n\Hd
\right\|
&\le
\frac1{n^{3/2}}\max_{1\le k\le n}
\left(\sum_{i=1}^k\|\Hd\|+\frac kn\sum_{i=1}^n\|\Hd\|\right)\\
&\le\frac{2}{n^{3/2}}\sum_{i=1}^n\|\Hd\|=o_p(1).
\end{align*}
\end{proof}

\begin{proof}[\bf{Proof of Theorem \ref{th2}}]
Let
\begin{align*}
&\kappa(x,\alpha)=
1_d^\TT a^{-1}(x,\alpha)
,\quad
\xi_i=\kappa_{i-1}(\alpha_0)(\DeX-h_nb_{i-1}(\beta_0)).
\end{align*} 
By the Taylor expansion, we have
\begin{align*}
\kappa_{i-1}^\ell(\hat\alpha_n)
=
\kappa_{i-1}^\ell(\alpha_0)
+\partial_\alpha\kappa_{i-1}^\ell(\alpha_0)(\hat\alpha_n-\alpha_0)
+(\hat\alpha_n-\alpha_0)^\TT\KK(\hat\alpha_n-\alpha_0),
\end{align*}
where 
\begin{align*}
\KK=\int_0^1(1-u)\partial_\alpha^2
\kappa_{i-1}^\ell(\alpha_0+u(\hat\alpha_n-\alpha_0))\dd u.
\end{align*}

Then, we can express
\begin{align*}
\hat\xi_i&=
\kappa_{i-1}(\hat\alpha_n)\left(\DeX-h_nb_{i-1}(\hat\beta_n)\right)\\
&=\sum_{\ell=1}^d
\kappa_{i-1}^{\ell}(\hat\alpha_n)\left(\DeX-h_nb_{i-1}(\hat\beta_n)\right)^\ell\\
&=\sum_{\ell=1}^d
\left(
\kappa_{i-1}^{\ell}(\alpha_0)
+\partial_{\alpha}\kappa_{i-1}^\ell(\alpha_0)(\hat\alpha_n-\alpha_0)
+(\hat\alpha_n-\alpha_0)^\TT 
\KK(\hat\alpha_n-\alpha_0)
\right)
\left(\DeX-h_nb_{i-1}(\hat\beta_n)\right)^\ell\\
&=\sum_{\ell=1}^d\kappa_{i-1}^\ell(\alpha_0)
\left(\DeX-h_nb_{i-1}(\beta_0)-h_n(b_{i-1}(\hat\beta_n)-b_{i-1}(\beta_0))\right)^\ell\\
&\qquad+\left(
\frac{1}{\sqrt n}\sum_{\ell=1}^d
\partial_{\alpha}\kappa_{i-1}^\ell(\alpha_0)
\left(\DeX-h_nb_{i-1}(\hat\beta_n)\right)^\ell\right)
\sqrt n (\hat\alpha_n-\alpha_0)\\
&\qquad+\left(\sqrt n (\hat\alpha_n-\alpha_0)\right)^\TT
\left(
\frac1n\sum_{\ell=1}^d
\KK
\left(\DeX-h_nb_{i-1}(\hat\beta_n)\right)^\ell
\right)\sqrt n (\hat\alpha_n-\alpha_0)\\
&=\xi_i
-\left(\sqrt{\frac{h_n}{n}}\sum_{\ell=1}^d\kappa_{i-1}^\ell(\alpha_0)
\partial_\beta b_{i-1}^\ell(\beta_0)\right)
\sqrt{nh_n}(\hat\beta_n-\beta_0)\\
&\qquad-\left(\sqrt{nh_n}(\hat\beta_n-\beta_0)\right)^\TT
\left(\frac{1}{n}\sum_{\ell=1}^d\kappa_{i-1}^\ell(\alpha_0)
\int_0^1(1-u)\partial_\beta^2b_{i-1}^\ell(\beta_0+u(\hat\beta_n-\beta_0))\dd u\right)
\sqrt{nh_n}(\hat\beta_n-\beta_0)
\\
&\qquad+\left(
\frac{1}{\sqrt n}\sum_{\ell=1}^d
\partial_{\alpha}\kappa_{i-1}^\ell(\alpha_0)(\DeX)^\ell
\right)\sqrt n (\hat\alpha_n-\alpha_0)
-\left(
\frac{h_n}{\sqrt{n}}\sum_{\ell=1}^d
\partial_{\alpha}\kappa_{i-1}^\ell(\alpha_0)b_{i-1}^\ell(\hat\beta_n)
\right)\sqrt n (\hat\alpha_n-\alpha_0)\\
&\qquad+\left(\sqrt n (\hat\alpha_n-\alpha_0)\right)^\TT
\left(
\frac1n\sum_{\ell=1}^d
\KK\left(\DeX-h_nb_{i-1}(\hat\beta_n)\right)^\ell 
\right)\sqrt n (\hat\alpha_n-\alpha_0)\\
&=:
\xi_i+\sqrt{\frac{h_n}{n}}\Qi\left(\sqrt{nh_n}(\hat\beta_n-\beta_0)\right)
+
\frac1n\left(\sqrt{nh_n}(\hat\beta_n-\beta_0)\right)^\TT\Qd
\left(\sqrt{nh_n}(\hat\beta_n-\beta_0)\right)\\
&\qquad+
\frac{1}{\sqrt n}\Qt
\left(\sqrt n (\hat\alpha_n-\alpha_0)\right)
+\frac{h_n}{\sqrt n}
\Qf
\left(\sqrt n (\hat\alpha_n-\alpha_0)\right)\\
&\qquad
+\frac1n\left(\sqrt n (\hat\alpha_n-\alpha_0)\right)^\TT
\Qg
\left(\sqrt n (\hat\alpha_n-\alpha_0)\right).
\end{align*}
Therefore, it is enough to show
\begin{align}
\frac{1}{\sqrt{dnh_n}}\max_{1\le k\le n}\left|\sum_{i=1}^k\xi_i
-\frac{k}{n}\sum_{i=1}^n\xi_i\right|&\dto \sup_{0\le s\le 1}| { \boldsymbol B_1^0(s)} |,
\label{dp0}\\
\frac{1}{n}\max_{1\le k\le n}
\left\|
\sum_{i=1}^k\Qi
-\frac{k}{n}\sum_{i=1}^n\Qi
\right\|&=o_p(1),
\label{dp1}\\
\frac{1}{n\sqrt{nh_n}}\max_{1\le k\le n}
\left\|
\sum_{i=1}^k\Qd
-\frac{k}{n}\sum_{i=1}^n\Qd
\right\|&=o_p(1),
\label{dp1-2}\\
\frac1{n\sqrt{h_n}}\max_{1\le k\le n}\left\|
\sum_{i=1}^n\Qt-\frac kn\sum_{i=1}^n\Qt
\right\|&=o_p(1),
\label{dp2}\\
\frac{\sqrt{h_n}}{n}\max_{1\le k\le n}\left\|
\sum_{i=1}^k\Qf-\frac kn\sum_{i=1}^n\Qf
\right\|&=o_p(1),
\label{dp2-1}\\
\frac1{n\sqrt{nh_n}}\max_{1\le k\le n}\left\|
\sum_{i=1}^k\Qg-\frac kn\sum_{i=1}^n\Qg
\right\|&=o_p(1).
\label{dp3}
\end{align}


\textit{Proof of \eqref{dp0}.} 
If we prove
\begin{align}\label{bm2}
\mathcal V_n(s):=\frac{1}{\sqrt{dnh_n}}\sum_{i=1}^{[ns]}\xi_i
\wto  { \boldsymbol B_1(s)} \quad\text{in }\mathbb D[0,1],
\end{align}
{ we see from the continuous mapping theorem that}
\begin{align*}
\frac{1}{\sqrt{dnh_n}}\max_{1\le k\le n}
\left|\sum_{i=1}^k\xi_i-\frac{k}{n}\sum_{i=1}^n\xi_i\right|
&=
\max_{1\le k\le n}\left|\frac{1}{\sqrt{dnh_n}}\sum_{i=1}^k\xi_i
-\frac{k}{n}\frac{1}{\sqrt{dnh_n}}\sum_{i=1}^n\xi_i\right|\\
&=
\sup_{0\le s\le 1}\left|\frac{1}{\sqrt{dnh_n}}\sum_{i=1}^{[ns]}\xi_i
-\frac{[ns]}{n}\frac{1}{\sqrt{dnh_n}}\sum_{i=1}^n\xi_i\right|\\
&=
\sup_{0\le s\le 1}\left|\mathcal V_n(s)
-\frac{[ns]}{n}\mathcal V_n(1)\right|\\
&\dto
\sup_{0\le s\le 1}| { \boldsymbol B_1(s)} -s { \boldsymbol B_1(1)} |
=
\sup_{0\le s\le 1}| { \boldsymbol B_1^0(s)} |.
\end{align*}

Let us prove \eqref{bm2}.
It is enough to prove  
\begin{align}
\frac1{\sqrt{dnh_n}}\sum_{i=1}^{[ns]}
\left(\xi_i
-\EEt\left[\xi_i\middle|\GG\right]\right)
&\wto  { \boldsymbol B_1(s)} \quad{\text{in }}\mathbb D[0,1]{ ,} 
\label{as1}\\
\frac{1}{\sqrt{nh_n}}\sum_{i=1}^{n}\EEt
\left[\xi_i\middle|\GG\right]&=o_p(1).
\label{as2}
\end{align}
From Lemma \ref{ke7},
\begin{align*}
\frac{1}{\sqrt{nh_n}}\sum_{i=1}^{n}\EEt
\left[\xi_i\middle|\GG\right]
=
\frac{1}{\sqrt{nh_n}}\sum_{i=1}^{n}\Rd
=
\sqrt{nh_n^3}\cdot\frac1n\sum_{i=1}^n\Ro
=o_p(1).
\end{align*}
Hence we have \eqref{as2}.

{ It follows from Corollary 3.8 of McLeish (1974) that  one has \eqref{as1}} if we prove
\begin{align}
\frac1{\sqrt{nh_n}}\sum_{i=1}^{[ns]}\left|
\EEt\left[
\xi_i-\EEt\left[\xi_i\middle|\GG\right]
\middle|\GG\right]\right|&=o_p(1),
\label{zx0}\\
\frac1{dnh_n}\sum_{i=1}^{[ns]}
\EEt\left[
\left(\xi_i-\EEt\left[\xi_i\middle|\GG\right]\right)^2
\middle|\GG\right]&\pto s,
\label{zx1}\\
\frac1{(nh_n)^2}\sum_{i=1}^{[ns]}
\EEt\left[
\left(\xi_i-\EEt\left[\xi_i\middle|\GG\right]\right)^4
\middle|\GG\right]&=o_p(1),
\label{zx2}
\end{align}
for all $s\in[0,1]$.

\eqref{zx0} is { trivial}.
From Lemma \ref{ke7},
\begin{align*}
\frac1{dnh_n}\sum_{i=1}^{[ns]}
\EEt\left[
\left(\xi_i-\EEt\left[\xi_i\middle|\GG\right]\right)^2
\middle|\GG\right]
&=\frac1{nh_n}\sum_{i=1}^{[ns]}
\left(\EEt\left[\xi_i^2\middle|\GG\right]+\Rf\right)\\
&=\frac{[ns]}{n}\frac1{d[ns]h_n}\sum_{i=1}^{[ns]}
\left(dh_n+\Rd\right)
\pto s,
\end{align*}
 { i.e.,}  \eqref{zx1} holds.
Since $\EEt[\xi_i^4|\GG]=\Rd$,
\begin{align*}
\frac1{(nh_n)^2}\sum_{i=1}^{[ns]}
\EEt\left[
\left(\xi_i-\EEt\left[\xi_i\middle|\GG\right]\right)^4
\middle|\GG\right]
&\le\frac{C}{(nh_n)^2}\sum_{i=1}^{[ns]}
\EEt\left[
\xi_i^4+R_{i-1}(h_n^8,\theta)
\middle|\GG\right]\\
&=\frac1{(nh_n)^2}\sum_{i=1}^{[ns]}\Rd\\
&=\frac 1{n}\cdot\frac1n\sum_{i=1}^{[ns]}
\Ro
=o_p(1).
\end{align*}
Therefore we obtain \eqref{zx2}. This completes the proof of \eqref{dp0}.

\textit{{ Proofs} of \eqref{dp1} and \eqref{dp2}.}
Noting that
\begin{align*}
\Qi&=
-\sum_{\ell=1}^d\kappa_{i-1}^\ell(\alpha_0)
\partial_\beta b_{i-1}^\ell(\beta_0),
\\
\Qt&=
\sum_{\ell=1}^d
\partial_{\alpha}\kappa_{i-1}^\ell(\alpha_0)(\DeX)^\ell,
\end{align*}
we have, by using Lemma \ref{lem1} and \ref{lem2}, 
\begin{align*}
&\frac{1}{n}\max_{1\le k\le n}
\left\|
\sum_{i=1}^k\Qi
-\frac{k}{n}\sum_{i=1}^n\Qi
\right\|\\
&\le
\sum_{\ell=1}^d
\frac{1}{n}\max_{1\le k\le n}
\left\|
\sum_{i=1}^k
\kappa_{i-1}^\ell(\alpha_0)
\partial_\beta b_{i-1}^\ell(\beta_0)
-\frac{k}{n}\sum_{i=1}^n
\kappa_{i-1}^\ell(\alpha_0)
\partial_\beta b_{i-1}^\ell(\beta_0)
\right\|\\
&=o_p(1)
\end{align*}
and 
\begin{align*}
&\frac1{n\sqrt{h_n}}\max_{1\le k\le n}\left\|
\sum_{i=1}^n\Qt-\frac kn\sum_{i=1}^n\Qt
\right\|\\
&\le
\sum_{\ell=1}^d
\frac1{n\sqrt{h_n}}\max_{1\le k\le n}\left\|
\sum_{i=1}^n
\partial_{\alpha}\kappa_{i-1}^\ell(\alpha_0)(\DeX)^\ell
-\frac kn\sum_{i=1}^n
\partial_{\alpha}\kappa_{i-1}^\ell(\alpha_0)(\DeX)^\ell
\right\|\\
&=o_p(1).
\end{align*} 


\textit{{ Proofs} of \eqref{dp1-2}, \eqref{dp2-1} and \eqref{dp3}.} 
Since $\theta_0\in \mathrm{Int}\,\Theta$, there exists an open neighborhood 
$\mathcal O_{\theta_0}$ of $\theta_0$ such that $\mathcal O_{\theta_0}\subset\Theta$.
Note that
\begin{align*}
\Qd
&=\sum_{\ell=1}^d\kappa_{i-1}^\ell(\alpha_0)
\int_0^1(1-u)\partial_\beta^2b_{i-1}^\ell(\beta_0+u(\hat\beta_n-\beta_0))\dd u,
\\
\Qf
&=
-\sum_{\ell=1}^d
\partial_\alpha\kappa_{i-1}^\ell(\alpha_0)b_{i-1}(\hat\beta_n),
\\
\Qg
&=
\sum_{\ell=1}^d
\KK\left(\DeX -h_nb_{i-1}(\hat\beta_n)\right)^\ell\\
&=
\sum_{\ell=1}^d
\int_0^1(1-u)\partial_\alpha^2
\kappa_{i-1}^\ell(\alpha_0+u(\hat\alpha_n-\alpha_0))\dd u
\left(\DeX -h_nb_{i-1}(\hat\beta_n)\right)^\ell.
\end{align*}
If $\hat\theta_n\in\mathcal O_{\theta_0}$, then
\begin{align*}
\|\Qd\|
&\le
\sum_{\ell=1}^d
\sup_{\alpha\in\Theta_A}|\kappa_{i-1}^\ell(\alpha)|\sup_{\beta\in\Theta_B}
\left\|\partial_\beta^2b_{i-1}^\ell(\beta)\right\|,\\
\|\Qf\|
&\le
\sum_{\ell=1}^d
\sup_{\alpha\in\Theta_A}\|\partial_\alpha\kappa_{i-1}^\ell(\alpha)\|
\sup_{\beta\in\Theta_B}|b_{i-1}^\ell(\beta)|,\\
\|\Qg\|
&\le
\sum_{\ell=1}^d
\sup_{\alpha\in\Theta_A}\left\|
\partial_\alpha^2\kappa_{i-1}^\ell(\alpha)
\right\|
\left(|(\DeX)^\ell|
+h_n
\sup_{\beta\in\Theta_B}
|b_{i-1}^\ell(\beta)|\right),
\end{align*}
\begin{align*}
\EEt\left[
\frac{1}{n\sqrt{nh_n}}\sum_{i=1}^n\sum_{\ell=1}^d
\sup_{\alpha\in\Theta_A}|\kappa_{i-1}^\ell(\alpha)|\sup_{\beta\in\Theta_B}
\left\|\partial_\beta^2b_{i-1}^\ell(\beta)\right\|
\right]
\le \frac{C}{\sqrt{nh_n}}\lto0,
\end{align*}
\begin{align*}
\EEt\left[
\frac{\sqrt{h_n}}{n}\sum_{i=1}^n\sum_{\ell=1}^d
\sup_{\alpha\in\Theta_A}\|\partial_\alpha\kappa_{i-1}^\ell(\alpha)\|
\sup_{\beta\in\Theta_B}|b_{i-1}^\ell(\beta)|
\right]
\le C\sqrt{h_n}\lto0
\end{align*}
and
\begin{align*}
&\EEt\left[
\frac{1}{n\sqrt{nh_n}}\sum_{i=1}^n
\sum_{\ell=1}^d
\sup_{\alpha\in\Theta_A}\left\|
\partial_\alpha^2\kappa_{i-1}^\ell(\alpha)
\right\|
\left(|(\DeX)^\ell|
+h_n
\sup_{\beta\in\Theta_B}
|b_{i-1}^\ell(\beta)|\right)\right]\\
&\le
\frac{C}{n\sqrt{nh_n}}\sum_{i=1}^n\sum_{\ell=1}^d
\EEt\left[
\sup_{\alpha\in\Theta_A}\left\|
\partial_\alpha^2\kappa_{i-1}^\ell(\alpha)
\right\|^2
\right]^{1/2}
\left(\EEt\left[
((\DeX)^\ell)^2\right]
+h_n^2
\EEt\left[\sup_{\beta\in\Theta_B}
\left|b_{i-1}^\ell(\beta)\right|^2
\right]\right)^{1/2}\\
&\le \frac{C'}{\sqrt n}\lto0.
\end{align*}
Hence, 
in the same way as the proof of \eqref{ap3},
we see from  \textbf{[A8]} that
\begin{align*}
\frac{1}{n\sqrt{nh_n}}\sum_{i=1}^n\|\Qd\|=o_p(1),\quad
\frac{\sqrt{h_n}}{n}\sum_{i=1}^n\|\Qf\|=o_p(1),\quad
\frac1{n\sqrt{nh_n}}\sum_{i=1}^n\|\Qg\|=o_p(1).
\end{align*}
\end{proof}

\begin{proof}[\bf{Proof of Theorem \ref{th03}}]
Let
\begin{align*}
\zeta_i=
\partial_{\beta}b_{i-1}(\beta_0)^\TT
A_{i-1}^{-1}(\alpha_0)
\left(\DeX-h_nb_{i-1}(\beta_0)\right).
\end{align*}
By the Taylor expansion, we have
\begin{align*}
(A_{i-1}^{-1}(\hat\alpha_n))^{\ell_1,\ell_2}
=
(A_{i-1}^{-1}(\alpha_0))^{\ell_1,\ell_2}
+\partial_\alpha(A_{i-1}^{-1}(\alpha_0))^{\ell_1,\ell_2}(\hat\alpha_n-\alpha_0)
+(\hat\alpha_n-\alpha_0)^\TT\Aa(\hat\alpha_n-\alpha_0),
\end{align*}
where 
\begin{align*}
\Aa=\int_0^1(1-u)\partial_\alpha^2
\left(A_{i-1}^{-1}(\alpha_0+u(\hat\alpha_n-\alpha_0)\right)^{\ell_1,\ell_2}\dd u.
\end{align*}

Then, we can express
\begin{align*}
\hat{\zeta_i^\ell}
&=\sum_{\ell_1,\ell_2=1}^d
\partial_{\beta^\ell}b^{\ell_1}_{i-1}(\hat\beta_n)
(A_{i-1}^{-1}(\hat\alpha_n))^{\ell_1,\ell_2}
\left(\DeX-h_nb_{i-1}(\hat\beta_n)\right)^{\ell_2}\nonumber\\
&=\sum_{\ell_1,\ell_2=1}^d
\partial_{\beta^\ell}b^{\ell_1}_{i-1}(\hat\beta_n)
(A_{i-1}^{-1}(\alpha_0))^{\ell_1,\ell_2}
\left(\DeX-h_nb_{i-1}(\hat\beta_n)\right)^{\ell_2}\\
&\qquad+\frac{1}{\sqrt n}\sum_{\ell_1,\ell_2=1}^d
\partial_{\beta^\ell}b^{\ell_1}_{i-1}(\hat\beta_n)
\partial_\alpha(A_{i-1}^{-1}(\alpha_0))^{\ell_1,\ell_2}
\left(\DeX-h_nb_{i-1}(\hat\beta_n)\right)^{\ell_2}
\sqrt n(\hat\alpha_n-\alpha_0)
\\
&\qquad+\left(\sqrt n(\hat\alpha_n-\alpha_0)\right)^\TT
\left(\frac1n\sum_{\ell_1,\ell_2=1}^d
\partial_{\beta^\ell}b^{\ell_1}_{i-1}(\hat\beta_n)\Aa
\left(\DeX-h_nb_{i-1}(\hat\beta_n)\right)^{\ell_2}\right)
\sqrt n(\hat\alpha_n-\alpha_0)\\
&=
\sum_{\ell_1,\ell_2=1}^d
\partial_{\beta^\ell}b^{\ell_1}_{i-1}(\hat\beta_n)
(A_{i-1}^{-1}(\alpha_0))^{\ell_1,\ell_2}
(\DeX-h_nb_{i-1}(\beta_0))^{\ell_2}\\
&\qquad+h_n\sum_{\ell_1,\ell_2=1}^d
\partial_{\beta^\ell}b^{\ell_1}_{i-1}(\hat\beta_n)
(A_{i-1}^{-1}(\alpha_0))^{\ell_1,\ell_2}
(b_{i-1}^{\ell_2}(\beta_0)-b_{i-1}^{\ell_2}(\hat\beta_n))\\
&\qquad+\frac{1}{\sqrt n}\sum_{\ell_1,\ell_2=1}^d
\partial_{\beta^\ell}b^{\ell_1}_{i-1}(\hat\beta_n)
\partial_\alpha(A_{i-1}^{-1}(\alpha_0))^{\ell_1,\ell_2}
\left(\DeX-h_nb_{i-1}(\hat\beta_n)\right)^{\ell_2}
\sqrt n(\hat\alpha_n-\alpha_0)
\\
&\qquad+\left(\sqrt n(\hat\alpha_n-\alpha_0)\right)^\TT
\left(\frac1n\sum_{\ell_1,\ell_2=1}^d
\partial_{\beta^\ell}b^{\ell_1}_{i-1}(\hat\beta_n)\Aa
\left(\DeX-h_nb_{i-1}(\hat\beta_n)\right)^{\ell_2}\right)
\sqrt n(\hat\alpha_n-\alpha_0)\\
&=:J_1+J_2+J_3+J_4.
\end{align*}
We set 
\begin{align*}
A\otimes x^{\otimes k}=\sum_{\ell_1,\ldots,\ell_k=1}^q
A^{\ell_1,\ldots,\ell_k}x^{\ell_1}\cdots x^{\ell_k},\quad
\text{ for }
A\in\underbrace{\mathbb R^q\otimes\cdots\otimes\mathbb R^q}_{k},\ x\in\mathbb R^q.
\end{align*} 
Note that, by the Taylor expansion,
\begin{align*}
%
\partial_{\beta^\ell}b^{\ell_1}_{i-1}(\hat\beta_n)
&=
\partial_{\beta^\ell}b^{\ell_1}_{i-1}(\beta_0)
+\sum_{j=1}^{m-1}\frac{1}{(nh_n)^{j/2}}
\partial_\beta^j\partial_{\beta^\ell}b^{\ell_1}_{i-1}(\beta_0)
\otimes\left(\sqrt{nh_n}(\hat\beta_n-\beta_0)\right)^{\otimes j}\\
&\qquad+\frac1{(nh_n)^{m/2}}\mathcal B_{m,i-1}^{\ell,\ell_1}
\otimes\left(\sqrt{nh_n}(\hat\beta_n-\beta_0)\right)^{\otimes m},
\end{align*}
where
\begin{align*}
\mathcal B_{m,i-1}^{\ell,k}
&=
\frac1{(m-1)!}\int_0^1(1-u)^{m-1}
\partial_\beta^m\partial_{\beta^\ell}b_{i-1}^k(\beta_0+u(\hat\beta_n-\beta_0))\dd u,\\
\partial_\beta^j\partial_{\beta^\ell}b^k(x,\beta)
&=\left(\partial_{\beta^{\ell_j}}\cdots\partial_{\beta^{\ell_1}}
\partial_{\beta^\ell}b^k(x,\beta)\right)_{\ell_1,\ldots,\ell_j}
\in\underbrace{\mathbb R^q\otimes\cdots\otimes\mathbb R^q}_{j}.
\end{align*}
Then,
\begin{align*}
J_1
&=
\sum_{\ell_1,\ell_2=1}^d
\partial_{\beta^\ell}b^{\ell_1}_{i-1}(\hat\beta_n)
(A_{i-1}^{-1}(\alpha_0))^{\ell_1,\ell_2}
(\DeX-h_nb_{i-1}(\beta_0))^{\ell_2}\\
&=
\sum_{\ell_1,\ell_2=1}^d
\left(
\partial_{\beta^\ell}b^{\ell_1}_{i-1}(\beta_0)
+\sum_{j=1}^{m-1}\frac{1}{(nh_n)^{j/2}}
\partial_\beta^j\partial_{\beta^\ell}b^{\ell_1}_{i-1}(\beta_0)
\otimes\left(\sqrt{nh_n}(\hat\beta_n-\beta_0)\right)^{\otimes j}
\right.\\
&\left.\qquad\qquad
+\frac1{(nh_n)^{m/2}}\Bm
\otimes\left(\sqrt{nh_n}(\hat\beta_n-\beta_0)\right)^{\otimes m}
\right)
(A_{i-1}^{-1}(\alpha_0))^{\ell_1,\ell_2}
(\DeX-h_nb_{i-1}(\beta_0))^{\ell_2}\\
&=:
\zeta_i^\ell
+\sum_{j=1}^{m-1}\frac{1}{(nh_n)^{j/2}}\Yj
\otimes\left(\sqrt{nh_n}(\hat\beta_n-\beta_0)\right)^{\otimes j}
+\frac{1}{(nh_n)^{m/2}}\Ym
\otimes\left(\sqrt{nh_n}(\hat\beta_n-\beta_0)\right)^{\otimes m}
\end{align*}
{ and}
\begin{align*}
J_2
&=
h_n\sum_{\ell_1,\ell_2=1}^d
\partial_{\beta^\ell}b^{\ell_1}_{i-1}(\hat\beta_n)
(A_{i-1}^{-1}(\alpha_0))^{\ell_1,\ell_2}
(b_{i-1}^{\ell_2}(\beta_0)-b_{i-1}^{\ell_2}(\hat\beta_n))\\
&=
h_n\sum_{\ell_1,\ell_2=1}^d
\left(
\partial_{\beta^\ell}b^{\ell_1}_{i-1}(\beta_0)
+\frac{1}{\sqrt{nh_n}}
\partial_\beta\partial_{\beta^\ell}b^{\ell_1}_{i-1}(\beta_0)
\left(\sqrt{nh_n}(\hat\beta_n-\beta_0)\right)\right.\\
&\left.\qquad\qquad\qquad+\frac1{nh_n}\Bd
\otimes\left(\sqrt{nh_n}(\hat\beta_n-\beta_0)\right)^{\otimes 2}\right)
(A_{i-1}^{-1}(\alpha_0))^{\ell_1,\ell_2}
(b_{i-1}^{\ell_2}(\beta_0)-b_{i-1}^{\ell_2}(\hat\beta_n))\\
&=
h_n\sum_{\ell_1,\ell_2=1}^d
\partial_{\beta^\ell}b^{\ell_1}_{i-1}(\beta_0)
(A_{i-1}^{-1}(\alpha_0))^{\ell_1,\ell_2}
(b_{i-1}^{\ell_2}(\beta_0)-b_{i-1}^{\ell_2}(\hat\beta_n))\\
&\qquad+
\sqrt{\frac{h_n}{n}}\left(
\sum_{\ell_1,\ell_2=1}^d
\partial_\beta\partial_{\beta^\ell}b^{\ell_1}_{i-1}(\beta_0)
(A_{i-1}^{-1}(\alpha_0))^{\ell_1,\ell_2}
(b_{i-1}^{\ell_2}(\beta_0)-b_{i-1}^{\ell_2}(\hat\beta_n))
\right)\sqrt{nh_n}(\hat\beta_n-\beta_0)
\\
&\qquad+
\frac1{n}\left(\sum_{\ell_1,\ell_2=1}^d\Bd
(A_{i-1}^{-1}(\alpha_0))^{\ell_1,\ell_2}
(b_{i-1}^{\ell_2}(\beta_0)-b_{i-1}^{\ell_2}(\hat\beta_n))\right)
\otimes\left(\sqrt{nh_n}(\hat\beta_n-\beta_0)\right)^{\otimes 2}
\\
&=
h_n\sum_{\ell_1,\ell_2=1}^d
\partial_{\beta^\ell}b^{\ell_1}_{i-1}(\beta_0)
(A_{i-1}^{-1}(\alpha_0))^{\ell_1,\ell_2}
\left(
-\frac{1}{\sqrt{nh_n}}\partial_\beta b_{i-1}^{\ell_2}(\beta_0)
\sqrt{nh_n}(\hat\beta_n-\beta_0)\right.\\
&\qquad\qquad\left.
-\frac{1}{nh_n}
\int_0^1(1-u)\partial_\beta^2 b_{i-1}^{\ell_2}(\beta_0+u(\hat\beta_n-\beta_0))\dd u
\otimes\left(\sqrt{nh_n}(\hat\beta_n-\beta_0)\right)^{\otimes2}
\right)\\
&\qquad-
\frac{1}{n}\left(\sum_{\ell_1,\ell_2=1}^d
\partial_\beta\partial_{\beta^\ell}b^{\ell_1}_{i-1}(\beta_0)^\TT
(A_{i-1}^{-1}(\alpha_0))^{\ell_1,\ell_2}
\int_0^1\partial_\beta b_{i-1}^{\ell_2}(\beta_0+u(\hat\beta_n-\beta_0))\dd u\right)
\otimes\left(\sqrt{nh_n}(\hat\beta_n-\beta_0)\right)^{\otimes2}
\\
&\qquad+
\frac1{n}\left(\sum_{\ell_1,\ell_2=1}^d\Bd
(A_{i-1}^{-1}(\alpha_0))^{\ell_1,\ell_2}
(b_{i-1}^{\ell_2}(\beta_0)-b_{i-1}^{\ell_2}(\hat\beta_n))\right)
\otimes\left(\sqrt{nh_n}(\hat\beta_n-\beta_0)\right)^{\otimes 2}\\
&=:
\sqrt{\frac{h_n}{n}}\Zi
\left(\sqrt{nh_n}(\hat\beta_n-\beta_0)\right)
+\frac1n
\Zd\otimes\left(\sqrt{nh_n}(\hat\beta_n-\beta_0)\right)^{\otimes2}.
\end{align*}
{ Moreover, }
\begin{align*}
J_3
&=\frac1{\sqrt n}
\sum_{\ell_1,\ell_2=1}^d
\left(\partial_{\beta^\ell}b^{\ell_1}_{i-1}(\beta_0)
+\frac{1}{\sqrt{nh_n}}\int_0^1\partial_\beta\partial_{\beta^\ell}b^{\ell_1}_{i-1}
(\beta_0+u(\hat\beta_n-\beta_0))\dd u
\sqrt{nh_n}(\hat\beta_n-\beta_0)\right)\\
&\qquad\qquad\cdot\partial_\alpha(A_{i-1}^{-1}(\alpha_0))^{\ell_1,\ell_2}
\left(\DeX-h_nb_{i-1}(\hat\beta_n)\right)^{\ell_2}
\sqrt n(\hat\alpha_n-\alpha_0)\\
&=
\frac1{\sqrt n}
\sum_{\ell_1,\ell_2=1}^d
\partial_{\beta^\ell}b^{\ell_1}_{i-1}(\beta_0)
\partial_\alpha(A_{i-1}^{-1}(\alpha_0))^{\ell_1,\ell_2}
(\DeX)^{\ell_2}
\sqrt n(\hat\alpha_n-\alpha_0)\\
&\qquad-\frac{h_n}{\sqrt n}
\sum_{\ell_1,\ell_2=1}^d
\partial_{\beta^\ell}b^{\ell_1}_{i-1}(\beta_0)
\partial_\alpha(A_{i-1}^{-1}(\alpha_0))^{\ell_1,\ell_2}
b_{i-1}^{\ell_2}(\hat\beta_n)
\sqrt n(\hat\alpha_n-\alpha_0)\\
&\qquad+\left(\sqrt{nh_n}(\hat\beta_n-\beta_0)\right)^\TT\frac1{n\sqrt{h_n}}
\sum_{\ell_1,\ell_2=1}^d
\int_0^1\partial_\beta\partial_{\beta^\ell}b^{\ell_1}_{i-1}
(\beta_0+u(\hat\beta_n-\beta_0))^\TT\dd u\\
&\qquad\qquad\cdot
\partial_\alpha(A_{i-1}^{-1}(\alpha_0))^{\ell_1,\ell_2}
\left(\DeX-h_nb_{i-1}(\hat\beta_n)\right)^{\ell_2}
\sqrt n(\hat\alpha_n-\alpha_0)\\
&=:\frac1{\sqrt n}\Zt\left(\sqrt n(\hat\alpha_n-\alpha_0)\right)
+\frac{h_n}{\sqrt n}\Zf\left(\sqrt n(\hat\alpha_n-\alpha_0)\right)\\
&\qquad
+\frac{1}{n\sqrt{h_n}}\left(\sqrt{nh_n}(\hat\beta_n-\beta_0)\right)^\TT
\Zg\left(\sqrt n(\hat\alpha_n-\alpha_0)\right),\\
J_4&=:
\frac1n\left(\sqrt n(\hat\alpha_n-\alpha_0)\right)^\TT \Zs
\left(\sqrt n(\hat\alpha_n-\alpha_0)\right).
\end{align*}
Note that $\mathcal I_n\pto\mathcal I$ and
\begin{align*}
T_n^\beta
=\frac{1}{\sqrt{nh_n}}\max_{1\le k\le n}
\left\|\mathcal I_n^{-1/2}\left(\sum_{i=1}^k\hat\zeta_i-\frac kn
\sum_{i=1}^n\hat\zeta_i\right)\right\|
=\frac{1}{\sqrt{nh_n}}\max_{1\le k\le n}
\left\|\left(\mathcal I_n^{-1/2}\mathcal I^{1/2}\right)
\mathcal I^{-1/2}\left(\sum_{i=1}^k\hat\zeta_i-\frac kn
\sum_{i=1}^n\hat\zeta_i\right)\right\|.
\end{align*}
Therefore, it is enough to show

\begin{align}
\frac{1}{\sqrt{nh_n}}\max_{1\le k\le n}
\left\|
\mathcal I^{-1/2}\left(\sum_{i=1}^k\zeta_i-\frac{k}{n}\sum_{i=1}^n\zeta_i\right)
\right\|
&\dto \sup_{0\le s\le 1}\| { \boldsymbol B_q^0(s)} \|,
\label{hf1}\\
\frac{1}{(nh_n)^{(j+1)/2}}\max_{1\le k\le n}
\left\|
\sum_{i=1}^k\Yj-\frac{k}{n}\sum_{i=1}^n\Yj
\right\|&=o_p(1),\qquad(1\le j\le m-1)
\label{hf2}\\
\frac{1}{(nh_n)^{(m+1)/2}}\max_{1\le k\le n}
\left\|
\sum_{i=1}^k\Ym-\frac{k}{n}\sum_{i=1}^n\Ym
\right\|&=o_p(1),
\label{hf3}\\
\frac{1}{n}\max_{1\le k\le n}
\left\|
\sum_{i=1}^k\Zi-\frac{k}{n}\sum_{i=1}^n\Zi
\right\|&=o_p(1),
\label{hf4}\\
\frac{1}{n\sqrt{nh_n}}\max_{1\le k\le n}
\left\|
\sum_{i=1}^k\Zd
-\frac{k}{n}\sum_{i=1}^n\Zd
\right\|&=o_p(1),
\label{hf5}\\
\frac1{n\sqrt{h_n}}\max_{1\le k\le n}\left\|
\sum_{i=1}^n\Zt-\frac kn\sum_{i=1}^n\Zt
\right\|&=o_p(1),
\label{hf6}\\
\frac{\sqrt{h_n}}{n}\max_{1\le k\le n}\left\|
\sum_{i=1}^k\Zf-\frac kn\sum_{i=1}^n\Zf
\right\|&=o_p(1),
\label{hf7}\\
\frac1{n^{3/2}h_n}\max_{1\le k\le n}\left\|
\sum_{i=1}^k\Zg-\frac kn\sum_{i=1}^n\Zg
\right\|&=o_p(1),
\label{hf8}\\
\frac1{n\sqrt{nh_n}}\max_{1\le k\le n}\left\|
\sum_{i=1}^k\Zs-\frac kn\sum_{i=1}^n\Zs
\right\|&=o_p(1).
\label{hf9}
\end{align}

\textit{Proof of \eqref{hf1}.} 
If we prove
\begin{align}\label{bm3}
\mathcal W_n(s):=\frac{1}{\sqrt{nh_n}}\sum_{i=1}^{[ns]}\mathcal I^{-1/2}\zeta_i
\wto  { \boldsymbol B_q(s)} \quad\text{in }\mathbb D[0,1],
\end{align}
{ then, it follows from the continuous mapping theorem that}
\begin{align*}
\frac{1}{\sqrt{nh_n}}\max_{1\le k\le n}
\left\|
\mathcal I^{-1/2}\left(\sum_{i=1}^k\zeta_i-\frac{k}{n}\sum_{i=1}^n\zeta_i\right)
\right\|
&=
\max_{1\le k\le n}\left\|\frac{1}{\sqrt{nh_n}}\sum_{i=1}^k\mathcal I^{-1/2}\zeta_i
-\frac{k}{n}\frac{1}{\sqrt{nh_n}}\sum_{i=1}^n\mathcal I^{-1/2}\zeta_i\right\|\\
&=
\sup_{0\le s\le 1}\left\|\frac{1}{\sqrt{nh_n}}\sum_{i=1}^{[ns]}\mathcal I^{-1/2}\zeta_i
-\frac{[ns]}{n}\frac{1}{\sqrt{nh_n}}\sum_{i=1}^n\mathcal I^{-1/2}\zeta_i\right\|\\
&=
\sup_{0\le s\le 1}\left\|\mathcal W_n(s)
-\frac{[ns]}{n}\mathcal W_n(1)\right\|\\
&\dto
\sup_{0\le s\le 1}\| { \boldsymbol B_q(s)} -s { \boldsymbol B_q(1)} \|
=
\sup_{0\le s\le 1}\| { \boldsymbol B_q^0(s)} \|.
\end{align*}

Let us prove \eqref{bm3}.
It is sufficient to show
\begin{align}
\frac1{\sqrt{nh_n}}\sum_{i=1}^{[ns]}
\mathcal I^{-1/2}\left(\zeta_i
-\EEt\left[\zeta_i\middle|\GG\right]\right)
&\wto  { \boldsymbol B_q(s)} \quad{\text{in }}\mathbb D[0,1]{ ,}
\label{qs1}\\
\frac{1}{\sqrt{nh_n}}\sum_{i=1}^{n}\EEt
\left[\zeta_i\middle|\GG\right]&=o_p(1).
\label{qs2}
\end{align}
From Lemma \ref{ke7},
\begin{align*}
\frac{1}{\sqrt{nh_n}}\sum_{i=1}^{n}\EEt
\left[\zeta_i\middle|\GG\right]
=
\frac{1}{\sqrt{nh_n}}\sum_{i=1}^{n}\Rd
=
\sqrt{nh_n^3}\cdot\frac1n\sum_{i=1}^n\Ro
=o_p(1).
\end{align*}
Hence we have \eqref{qs2}.

According to the Cram\'er-Wold theorem and Corollary 3.8 of McLeish (1974),  
we obtain \eqref{qs1} 
if we show that for all $c\in\mathbb R^q$,
\begin{align*}
\frac1{\sqrt{nh_n}}\sum_{i=1}^{[ns]}
c^\TT\mathcal I^{-1/2}\left(\zeta_i
-\EEt\left[\zeta_i\middle|\GG\right]\right)
&\wto c^\TT { \boldsymbol B_q(s)} \quad{\text{in }}\mathbb D[0,1],
\end{align*}
{ i.e.,} 
\begin{align}
\frac1{\sqrt{nh_n}}\sum_{i=1}^{[ns]}\left\|
\EEt\left[
c^\TT\mathcal I^{-1/2}\left(\zeta_i
-\EEt\left[\zeta_i\middle|\GG\right]\right)
\middle|\GG\right]\right\|&=o_p(1),
\label{zy0}\\
\frac1{nh_n}\sum_{i=1}^{[ns]}
\EEt\left[
\left(c^\TT\mathcal I^{-1/2}\left(\zeta_i
-\EEt\left[\zeta_i\middle|\GG\right]\right)\right)^2
\middle|\GG\right]&\pto \|c\|^2s,
\label{zy1}\\
\frac1{(nh_n)^2}\sum_{i=1}^{[ns]}
\EEt\left[
\left(c^\TT\mathcal I^{-1/2}\left(\zeta_i
-\EEt\left[\zeta_i\middle|\GG\right]\right)\right)^4
\middle|\GG\right]
&=o_p(1),
\label{zy2}
\end{align}
for all $s\in[0,1]$.

\eqref{zy0} is obvious.
From Lemma \ref{ke7},
\begin{align*}
&\frac1{nh_n}\sum_{i=1}^{[ns]}
\EEt\left[
\left(c^\TT\mathcal I^{-1/2}\left(\zeta_i
-\EEt\left[\zeta_i\middle|\GG\right]\right)\right)^2
\middle|\GG\right]\\
&=\frac1{nh_n}\sum_{i=1}^{[ns]}
\EEt\left[
c^\TT\mathcal I^{-1/2}
\left(\zeta_i-\EEt\left[\zeta_i\middle|\GG\right]\right)^{\otimes2}
\mathcal I^{-1/2}c
\middle|\GG\right]\\
&=\frac1{nh_n}\sum_{i=1}^{[ns]}
c^\TT\mathcal I^{-1/2}
\left(\EEt\left[\zeta_i\zeta_i^\TT\middle|\GG\right]
-\EEt[\zeta_i|\GG]\EEt[\zeta_i|\GG]^\TT\right)
\mathcal I^{-1/2}c
\\
&=\frac{[ns]}{n}\frac1{[ns]h_n}\sum_{i=1}^{[ns]}
c^\TT\mathcal I^{-1/2}
\left(h_n
\partial_\beta b_{i-1}(\beta_0)^\TT
A_{i-1}^{-1}(\alpha_0)\partial_\beta b_{i-1}(\beta_0)
+\Rd\right)
\mathcal I^{-1/2}c\\
&\pto sc^\TT \mathcal I^{-1/2}\mathcal I \mathcal I^{-1/2}c=\|c\|^2s,
\end{align*}
 { i.e.,}  \eqref{zy1} holds.
Since $\EEt[\|\zeta_i\|^4|\GG]=\Rd$,
\begin{align*}
&\frac1{(nh_n)^2}\sum_{i=1}^{[ns]}
\EEt\left[
\left(c^\TT\mathcal I^{-1/2}\left(\zeta_i
-\EEt\left[\zeta_i\middle|\GG\right]\right)\right)^4
\middle|\GG\right]\\
&\le\frac1{(nh_n)^2}\sum_{i=1}^{[ns]}
\EEt\left[
\|c^\TT\mathcal I^{-1/2}\|^4
\left\|\zeta_i-\EEt\left[\zeta_i\middle|\GG\right]\right\|^4
\middle|\GG\right]\\
&\le\frac{C}{(nh_n)^2}\sum_{i=1}^{[ns]}
\EEt\left[
\|\zeta_i\|^4+R_{i-1}(h_n^8,\theta)
\middle|\GG\right]\\
&=\frac1{(nh_n)^2}\sum_{i=1}^{[ns]}\Rd\\
&=\frac 1{n}\cdot\frac1n\sum_{i=1}^{[ns]}
\Ro
=o_p(1).
\end{align*}
Therefore we obtain \eqref{zy2}. This completes the proof of \eqref{hf1}.

\textit{{ Proofs} of \eqref{hf2}, \eqref{hf4} and \eqref{hf6}.}
Noting that
\begin{align*}
\Yj
&=
\sum_{\ell_1,\ell_2=1}^d
\partial_\beta^j\partial_{\beta^\ell}b^{\ell_1}_{i-1}(\beta_0)
(A_{i-1}^{-1}(\alpha_0))^{\ell_1,\ell_2}
(\DeX-h_nb_{i-1}(\beta_0))^{\ell_2}\\
&=
\sum_{\ell_1,\ell_2=1}^d
\partial_\beta^j\partial_{\beta^\ell}b^{\ell_1}_{i-1}(\beta_0)
(A_{i-1}^{-1}(\alpha_0))^{\ell_1,\ell_2}
(\DeX)^{\ell_2}
-h_n\sum_{\ell_1,\ell_2=1}^d
\partial_\beta^j\partial_{\beta^\ell}b^{\ell_1}_{i-1}(\beta_0)
(A_{i-1}^{-1}(\alpha_0))^{\ell_1,\ell_2}
b_{i-1}^{\ell_2}(\beta_0),\\
\Zi
&=
-\sum_{\ell_1,\ell_2=1}^d
\partial_{\beta^\ell}b^{\ell_1}_{i-1}(\beta_0)
(A_{i-1}^{-1}(\alpha_0))^{\ell_1,\ell_2}
\partial_\beta b_{i-1}^{\ell_2}(\beta_0),\\
\Zt
&=
\sum_{\ell_1,\ell_2=1}^d
\partial_{\beta^\ell}b^{\ell_1}_{i-1}(\beta_0)
\partial_\alpha(A_{i-1}^{-1}(\alpha_0))^{\ell_1,\ell_2}
(\DeX)^{\ell_2},
\end{align*}
we have, 
by using {\textbf{[A9]}}, Lemmas \ref{lem1} and \ref{lem2}, 
\begin{align*}
&\frac{1}{(nh_n)^{(j+1)/2}}\max_{1\le k\le n}
\left\|
\sum_{i=1}^k\Yj-\frac{k}{n}\sum_{i=1}^n\Yj
\right\|\\
&\le
\sum_{\ell_1,\ell_2=1}^d
\frac{1}{nh_n}\max_{1\le k\le n}
\left\|
\sum_{i=1}^k
\partial_\beta^j\partial_{\beta^\ell}b^{\ell_1}_{i-1}(\beta_0)
(A_{i-1}^{-1}(\alpha_0))^{\ell_1,\ell_2}
(\DeX)^{\ell_2}\right.\\
&\qquad\qquad\qquad\qquad\qquad\qquad\left.
-\frac{k}{n}\sum_{i=1}^n
\partial_\beta^j\partial_{\beta^\ell}b^{\ell_1}_{i-1}(\beta_0)
(A_{i-1}^{-1}(\alpha_0))^{\ell_1,\ell_2}
(\DeX)^{\ell_2}
\right\|\\
&\qquad+
\sum_{\ell_1,\ell_2=1}^d
\frac{1}{n}\max_{1\le k\le n}
\left\|
\sum_{i=1}^k
\partial_\beta^j\partial_{\beta^\ell}b^{\ell_1}_{i-1}(\beta_0)
(A_{i-1}^{-1}(\alpha_0))^{\ell_1,\ell_2}
b_{i-1}^{\ell_2}(\beta_0)\right.\\
&\qquad\qquad\qquad\qquad\qquad\qquad\left.
-\frac{k}{n}\sum_{i=1}^n
\partial_\beta^j\partial_{\beta^\ell}b^{\ell_1}_{i-1}(\beta_0)
(A_{i-1}^{-1}(\alpha_0))^{\ell_1,\ell_2}
b_{i-1}^{\ell_2}(\beta_0)
\right\|\\
&=o_p(1),
\end{align*}
\begin{align*}
&\frac{1}{n}\max_{1\le k\le n}
\left\|
\sum_{i=1}^k\Zi-\frac{k}{n}\sum_{i=1}^n\Zi
\right\|\\
&\le
\sum_{\ell_1,\ell_2=1}^d
\frac{1}{n}\max_{1\le k\le n}
\left\|
\sum_{i=1}^k
\partial_{\beta^\ell}b^{\ell_1}_{i-1}(\beta_0)
(A_{i-1}^{-1}(\alpha_0))^{\ell_1,\ell_2}
\partial_\beta b_{i-1}^{\ell_2}(\beta_0)\right.\\
&\qquad\qquad\qquad\qquad\qquad\qquad\left.-\frac{k}{n}\sum_{i=1}^n
\partial_{\beta^\ell}b^{\ell_1}_{i-1}(\beta_0)
(A_{i-1}^{-1}(\alpha_0))^{\ell_1,\ell_2}
\partial_\beta b_{i-1}^{\ell_2}(\beta_0)
\right\|\\
&=o_p(1),
\end{align*}
and
\begin{align*}
&\frac1{n\sqrt{h_n}}\max_{1\le k\le n}\left\|
\sum_{i=1}^n\Zt-\frac kn\sum_{i=1}^n\Zt
\right\|\\
&=
\sum_{\ell_1,\ell_2=1}^d
\frac{1}{n\sqrt{h_n}}\max_{1\le k\le n}
\left\|
\sum_{i=1}^k
\partial_{\beta^\ell}b^{\ell_1}_{i-1}(\beta_0)
\partial_\alpha(A_{i-1}^{-1}(\alpha_0))^{\ell_1,\ell_2}
(\DeX)^{\ell_2}\right.\\
&\qquad\qquad\qquad\qquad\qquad\qquad\left.
-\frac{k}{n}\sum_{i=1}^n
\partial_{\beta^\ell}b^{\ell_1}_{i-1}(\beta_0)
\partial_\alpha(A_{i-1}^{-1}(\alpha_0))^{\ell_1,\ell_2}
(\DeX)^{\ell_2}
\right\|\\
&=o_p(1).
\end{align*}

\textit{{ Proofs} of \eqref{hf3}, \eqref{hf5}, \eqref{hf7}, \eqref{hf8} and \eqref{hf9}.} 
Since $\theta_0\in\mathrm{Int\,}\Theta$, 
there exists an open neighborhood $\mathcal O_{\theta_0}$ of 
$\theta_0$ such that $\mathcal O_{\theta_0}\subset\Theta$. 
Note that
\begin{align*}
\Ym
&=
\sum_{\ell_1,\ell_2=1}^d
\Bm
(A_{i-1}^{-1}(\alpha_0))^{\ell_1,\ell_2}
(\DeX-h_nb_{i-1}(\beta_0))^{\ell_2}\\
&=
\sum_{\ell_1,\ell_2=1}^d
\frac1{(m-1)!}\int_0^1(1-u)^m
\partial_\beta^m\partial_{\beta^\ell}b_{i-1}^{\ell_1}(\beta_0+u(\hat\beta_n-\beta_0))\dd u
(A_{i-1}^{-1}(\alpha_0))^{\ell_1,\ell_2}
(\DeX-h_nb_{i-1}(\beta_0))^{\ell_2},\\
\Zd
&=
-\sum_{\ell_1,\ell_2=1}^d
\partial_{\beta^\ell}b^{\ell_1}_{i-1}(\beta_0)
(A_{i-1}^{-1}(\alpha_0))^{\ell_1,\ell_2}
\int_0^1(1-u)\partial_\beta^2 b_{i-1}^{\ell_2}(\beta_0+u(\hat\beta_n-\beta_0))\dd u\\
&\qquad-
\sum_{\ell_1,\ell_2=1}^d
\partial_\beta\partial_{\beta^\ell}b^{\ell_1}_{i-1}(\beta_0)^\TT
(A_{i-1}^{-1}(\alpha_0))^{\ell_1,\ell_2}
\int_0^1\partial_\beta b_{i-1}^{\ell_2}(\beta_0+u(\hat\beta_n-\beta_0))\dd u
\\
&\qquad+
\sum_{\ell_1,\ell_2=1}^d
\int_0^1(1-u)
\partial_\beta^2\partial_{\beta^\ell}b_{i-1}^{\ell_1}(\beta_0+u(\hat\beta_n-\beta_0))\dd u
(A_{i-1}^{-1}(\alpha_0))^{\ell_1,\ell_2}
(b_{i-1}^{\ell_2}(\beta_0)-b_{i-1}^{\ell_2}(\hat\beta_n)),\\
\Zf
&=
\sum_{\ell_1,\ell_2=1}^d
\partial_{\beta^\ell}b^{\ell_1}_{i-1}(\beta_0)
\partial_\alpha(A_{i-1}^{-1}(\alpha_0))^{\ell_1,\ell_2}
b_{i-1}^{\ell_2}(\hat\beta_n),\\
\Zg
&=
\sum_{\ell_1,\ell_2=1}^d
\int_0^1\partial_\beta\partial_{\beta^\ell}b^{\ell_1}_{i-1}
(\beta_0+u(\hat\beta_n-\beta_0))^\TT\dd u
\partial_\alpha(A_{i-1}^{-1}(\alpha_0))^{\ell_1,\ell_2}
\left(\DeX-h_nb_{i-1}(\hat\beta_n)\right)^{\ell_2},\\
\Zs
&=
\sum_{\ell_1,\ell_2=1}^d
\partial_{\beta^\ell}b^{\ell_1}_{i-1}(\hat\beta_n)\Aa
\left(\DeX-h_nb_{i-1}(\hat\beta_n)\right)^{\ell_2}\\
&=
\sum_{\ell_1,\ell_2=1}^d
\partial_{\beta^\ell}b^{\ell_1}_{i-1}(\hat\beta_n)
\int_0^1(1-u)\partial_\alpha^2
\left(A_{i-1}^{-1}(\alpha_0+u(\hat\alpha_n-\alpha_0)\right)^{\ell_1,\ell_2}\dd u
\left(\DeX-h_nb_{i-1}(\hat\beta_n)\right)^{\ell_2}.
\end{align*}

If $\hat\theta_n=(\hat\alpha_n,\hat\beta_n)\in\mathcal O_{\theta_0}$, then
\begin{align*}
\|\Ym\|
&\le
\sum_{\ell_1,\ell_2=1}^d
\sup_{\beta\in\Theta_B}\|\partial_\beta^m\partial_{\beta^\ell}b_{i-1}^{\ell_1}(\beta)\|
\sup_{\alpha\in\Theta_A}|(A_{i-1}^{-1}(\alpha))^{\ell_1,\ell_2}|
|(\DeX-h_nb_{i-1}(\beta_0))^{\ell_2}|,
\\
\|\Zd\|
&\le
\sum_{\ell_1,\ell_2=1}^d
\left(
\sup_{\beta\in\Theta_B}|\partial_{\beta^\ell}b^{\ell_1}_{i-1}(\beta)|
\sup_{\alpha\in\Theta_A}|(A_{i-1}^{-1}(\alpha))^{\ell_1,\ell_2}|
\sup_{\beta\in\Theta_B}\|
\partial_\beta^2 b_{i-1}^{\ell_2}(\beta)\|\right.\\
&\qquad\qquad+
\sup_{\beta\in\Theta_B}\|\partial_\beta\partial_{\beta^\ell}b^{\ell_1}_{i-1}(\beta)\|
\sup_{\alpha\in\Theta_A}|(A_{i-1}^{-1}(\alpha_0))^{\ell_1,\ell_2}|
\sup_{\beta\in\Theta_B}\|\partial_\beta b_{i-1}^{\ell_2}(\beta)\|\\
&\left.\qquad\qquad+2\sup_{\beta\in\Theta_B}
\|\partial_\beta^2\partial_{\beta^\ell}b_{i-1}^{\ell_1}(\beta)\|
\sup_{\beta\in\Theta_B}|b_{i-1}^{\ell_2}(\beta)|\right),\\
\|\Zf\|
&\le
\sum_{\ell_1,\ell_2=1}^d
\sup_{\beta\in\Theta_B}|\partial_{\beta^\ell}b_{i-1}^{\ell_1}(\beta)|
\sup_{\alpha\in\Theta_A}\|\partial_\alpha(A_{i-1}^{-1}(\alpha))^{\ell_1,\ell_2}\|
\sup_{\beta\in\Theta_B}|b_{i-1}^{\ell_2}(\beta)|,
\\
\|\Zg\|
&\le
\sum_{\ell_1,\ell_2=1}^d
\sup_{\beta\in\Theta_B}\|\partial_\beta\partial_{\beta^\ell}b_{i-1}^{\ell_1}(\beta)\|
\sup_{\alpha\in\Theta_A}\|\partial_\alpha(A_{i-1}^{-1}(\alpha))^{\ell_1,\ell_2}\|
\left(|(\DeX)^{\ell_2}|+h_n\sup_{\beta\in\Theta_B}|b_{i-1}^{\ell_2}(\beta)|\right),
\\
\|\Zs\|
&\le
\sum_{\ell_1,\ell_2=1}^d
\sup_{\beta\in\Theta_B}|\partial_{\beta^\ell}b_{i-1}^{\ell_1}(\beta)|
\sup_{\alpha\in\Theta_A}\|\partial_\alpha^2(A_{i-1}^{-1}(\alpha))^{\ell_1,\ell_2}\|
\left(|(\DeX)^{\ell_2}|+h_n\sup_{\beta\in\Theta_B}|b_{i-1}^{\ell_2}(\beta)|\right),
\end{align*}
\begin{align*}
&\EEt\left[
\frac{1}{(nh_n)^{(m+1)/2}}
\sum_{i=1}^n\sum_{\ell_1,\ell_2=1}^d
\sup_{\beta\in\Theta_B}\|\partial_\beta^m\partial_{\beta^\ell}b_{i-1}^{\ell_1}(\beta)\|
\sup_{\alpha\in\Theta_A}|(A_{i-1}^{-1}(\alpha))^{\ell_1,\ell_2}|
|(\DeX-h_nb_{i-1}(\beta_0))^\ell|
\right]\\
&\le
\frac{1}{(nh_n)^{(m+1)/2}}
\sum_{i=1}^n\sum_{\ell_1,\ell_2=1}^d
\EEt\left[
\sup_{\beta\in\Theta_B}\|\partial_\beta^m\partial_{\beta^\ell}b_{i-1}^{\ell_1}(\beta)\|^2
\sup_{\alpha\in\Theta_A}|(A_{i-1}^{-1}(\alpha))^{\ell_1,\ell_2}|^2
\right]^{1/2}\\
&\qquad\qquad\qquad\cdot\EEt\left[|(\DeX-h_nb_{i-1}(\beta_0))^{\ell_2}|^2\right]^{1/2}\\
&\le\frac{C}{(nh_n^{m/(m-1)})^{(m-1)/2}}\lto0,
\end{align*}
\begin{align*}
&\EEt\left[
\frac{1}{n\sqrt{nh_n}}\sum_{i=1}^n
\sum_{\ell_1,\ell_2=1}^d
\sup_{\beta\in\Theta_B}\|\partial_\beta^2\partial_{\beta^\ell}b_{i-1}^{\ell_1}(\beta)\|
\sup_{\beta\in\Theta_B}|b_{i-1}^{\ell_2}(\beta)|
\right]
\le\frac{C}{\sqrt{nh_n}}\lto0,
\end{align*}
\begin{align*}
&\EEt\left[
\frac{\sqrt{h_n}}{n}\sum_{i=1}^n
\sum_{\ell_1,\ell_2=1}^d
\left(
\sup_{\beta\in\Theta_B}|\partial_{\beta^\ell}b^{\ell_1}_{i-1}(\beta)|
\sup_{\alpha\in\Theta_A}|(A_{i-1}^{-1}(\alpha))^{\ell_1,\ell_2}|
\sup_{\beta\in\Theta_B}\|
\partial_\beta^2 b_{i-1}^{\ell_2}(\beta)\|\right.\right.\\
&\qquad\qquad+
\sup_{\beta\in\Theta_B}\|\partial_\beta\partial_{\beta^\ell}b^{\ell_1}_{i-1}(\beta)\|
\sup_{\alpha\in\Theta_A}|(A_{i-1}^{-1}(\alpha_0))^{\ell_1,\ell_2}|
\sup_{\beta\in\Theta_B}\|\partial_\beta b_{i-1}^{\ell_2}(\beta)\|\\
&\left.\left.\qquad\qquad+2\sup_{\beta\in\Theta_B}
\|\partial_\beta^2\partial_{\beta^\ell}b_{i-1}^{\ell_1}(\beta)\|
\sup_{\beta\in\Theta_B}|b_{i-1}^{\ell_2}(\beta)|\right)
\right]\\
&\le C\sqrt{h_n}\lto0,
\end{align*}
\begin{align*}
&\EEt\left[
\frac{1}{n^{3/2}h_n}\sum_{i=1}^n
\sum_{\ell_1,\ell_2=1}^d
\sup_{\beta\in\Theta_B}\|\partial_\beta\partial_{\beta^\ell}b_{i-1}^{\ell_1}(\beta)\|
\sup_{\alpha\in\Theta_A}\|\partial_\alpha(A_{i-1}^{-1}(\alpha))^{\ell_1,\ell_2}\|
\left(|(\DeX)^{\ell_2}|+h_n\sup_{\beta\in\Theta_B}|b_{i-1}^{\ell_2}(\beta)|\right)
\right]\\
&\le
\frac{C}{n^{3/2}h_n}\sum_{i=1}^n
\sum_{\ell_1,\ell_2=1}^d
\EEt\left[
\sup_{\beta\in\Theta_B}\|\partial_\beta\partial_{\beta^\ell}b_{i-1}^{\ell_1}(\beta)\|^2
\sup_{\alpha\in\Theta_A}\|\partial_\alpha(A_{i-1}^{-1}(\alpha))^{\ell_1,\ell_2}\|^2
\right]^{1/2}\\
&\qquad\qquad\qquad\cdot\EEt\left[
|(\DeX)^{\ell_2}|^2+h_n^2\sup_{\beta\in\Theta_B}|b_{i-1}^{\ell_2}(\beta)|^2
\right]^{1/2}\\
&\le\frac{C'}{\sqrt{nh_n}}\lto0,
\end{align*}
\begin{align*}
&\EEt\left[
\frac{1}{n\sqrt{nh_n}}\sum_{i=1}^n
\sum_{\ell_1,\ell_2=1}^d
\sup_{\beta\in\Theta_B}|\partial_{\beta^\ell}b_{i-1}^{\ell_1}(\beta)|
\sup_{\alpha\in\Theta_A}\|\partial_\alpha^2(A_{i-1}^{-1}(\alpha))^{\ell_1,\ell_2}\|
\left(|(\DeX)^{\ell_2}|+h_n\sup_{\beta\in\Theta_B}|b_{i-1}^{\ell_2}(\beta)|\right)
\right]\\
&\le
\frac{C}{n\sqrt{nh_n}}\sum_{i=1}^n
\sum_{\ell_1,\ell_2=1}^d
\EEt\left[
\sup_{\beta\in\Theta_B}|\partial_{\beta^\ell}b_{i-1}^{\ell_1}(\beta)|^2
\sup_{\alpha\in\Theta_A}\|\partial_\alpha^2(A_{i-1}^{-1}(\alpha))^{\ell_1,\ell_2}\|^2
\right]^{1/2}\\
&\qquad\qquad\qquad\cdot\EEt\left[
|(\DeX)^{\ell_2}|^2+h_n^2\sup_{\beta\in\Theta_B}|b_{i-1}^{\ell_2}(\beta)|^2
\right]^{1/2}\\
&\le\frac{C'}{\sqrt n}\lto0.
\end{align*}
Hence, 
{ in the same way as the proof of \eqref{ap3}, 
it follows from \textbf{[A8]} that} 
\begin{align*}
\frac1{(nh_n)^{(m+1)/2}}\sum_{i=1}^n\|\Ym\|&=o_p(1),\quad
\frac{1}{n\sqrt{nh_n}}\sum_{i=1}^n\|\Zd\|=o_p(1),\quad
\frac{\sqrt{h_n}}{n}\sum_{i=1}^n\|\Zf\|=o_p(1),\\
\frac1{n^{3/2}h_n}\sum_{i=1}^n\|\Zg\|&=o_p(1),\quad
\frac1{n\sqrt{nh_n}}\sum_{i=1}^n\|\Zs\|=o_p(1).
\end{align*}
\end{proof}
\begin{proof}[\bf{Proof of Corollary \ref{th3}}]
Note that by \textbf{[A9]}, we can show \eqref{hf3} 
in the proof of Theorem \ref{th03}.
If we assume \textbf{[A9']}, then one has  
\begin{align*}
\frac{1}{(nh_n)^{(M+1)/2}}\max_{1\le k\le n}
\left\|
\sum_{i=1}^k\mathcal Y_{M,i}^\ell-\frac{k}{n}\sum_{i=1}^n\mathcal Y_{M,i}^\ell
\right\|=0
\end{align*}
corresponding to \eqref{hf3}. 
Therefore, this corollary is shown as in the proof of Theorem \ref{th03}.
\end{proof}
\begin{proof}[\bf{Proof of Theorem \ref{th4}}]
If we prove
\begin{align}
&\frac{1}{[nt^*]}\sum_{i=1}^{[nt^*]}\hat\eta_i\pto F(\alpha_1^*),\label{kk1}\\
&\frac{1}{n-[nt^*]}\sum_{i=[nt^*]+1}^n\hat\eta_i\pto F(\alpha_2^*),\label{kk2}
\end{align}
{ then, we see from {\textbf{[B2]}} that}
\begin{align*}
\frac{1}{n}\sum_{i=1}^n\hat\eta_i
&=
\frac{[nt^*]}{n}\frac{1}{[nt^*]}
\sum_{i=1}^{[nt^*]}\hat\eta_i
+\frac{n-[nt^*]}{n}\frac{1}{n-[nt^*]}\sum_{i=[nt^*]+1}^n\hat\eta_i\\
&\pto
t^*F(\alpha_1^*)+(1-t^*)F(\alpha_2^*)
\end{align*}
and
\begin{align*}
\frac{1}{n}\sum_{i=1}^{[nt^*]}\hat\eta_i-\frac{[nt^*]}{n}\frac{1}{n}\sum_{i=1}^n\hat\eta_i
&=
\frac{[nt^*]}{n}\left(
\frac{1}{[nt^*]}\sum_{i=1}^{[nt^*]}\hat\eta_i-\frac{1}{n}\sum_{i=1}^n\hat\eta_i
\right)\\
&\pto
t^*(F(\alpha_1^*)-(t^*F(\alpha_1^*)+(1-t^*)F(\alpha_2^*)))\\
&=t^*(1-t^*)(F(\alpha_1^*)-F(\alpha_2^*))\neq0,
\end{align*}
 { i.e.,} 
\begin{align*}
T_n^\alpha
=\frac{1}{\sqrt{2dn}}\max_{1\le k\le n}
\left|
\sum_{i=1}^k\hat\eta_i-\frac{k}{n}\sum_{i=1}^n\hat\eta_i
\right|
&=\frac{1}{\sqrt{2dn}}\sup_{0\le s\le 1}
\left|
\sum_{i=1}^{[ns]}\hat\eta_i-\frac{[ns]}{n}\sum_{i=1}^n\hat\eta_i
\right|\\
&\ge
\frac{1}{\sqrt{2dn}}
\left|
\sum_{i=1}^{[nt^*]}\hat\eta_i-\frac{[nt^*]}{n}\sum_{i=1}^n\hat\eta_i
\right|\\
&=\sqrt{\frac{n}{2d}}
\left|
\frac{1}{n}\sum_{i=1}^{[nt^*]}\hat\eta_i-\frac{[nt^*]}{n}\frac{1}{n}\sum_{i=1}^n\hat\eta_i
\right|\lto\infty.
\end{align*}
{
Therefore, we have $P(T_n^\alpha > w_1(\epsilon))\lto1$. 
}

Let us show \eqref{kk1}. We can express
\begin{align*}
&\frac{1}{[nt^*]}\sum_{i=1}^{[nt^*]}\hat\eta_i\\
&=\frac{1}{[nt^*]}\sum_{i=1}^{[nt^*]}\tr\left(
A^{-1}_{i-1}(\hat\alpha_n)\frac{(\DeX)^{\otimes2}}{h_n}
\right)\\
&=\frac{1}{[nt^*]}\sum_{i=1}^{[nt^*]}\sum_{\ell_1,\ell_2=1}^d
(A^{-1}_{i-1}(\hat\alpha_n))^{\ell_1,\ell_2}
\frac{(\DeX)^{\ell_1}(\DeX)^{\ell_2}}{h_n}\\
&=\frac{1}{[nt^*]}\sum_{i=1}^{[nt^*]}\sum_{\ell_1,\ell_2=1}^d
\left((A^{-1}_{i-1}(\bar\alpha_*))^{\ell_1,\ell_2}
+\int_0^1\partial_\alpha 
(A^{-1}_{i-1}(\bar\alpha_*+u(\hat\alpha_n-\bar\alpha_*)))^{\ell_1,\ell_2}\dd u
(\hat\alpha_n-\bar\alpha_*)\right)
\frac{(\DeX)^{\ell_1}(\DeX)^{\ell_2}}{h_n}\\
&=\frac{1}{[nt^*]}\sum_{i=1}^{[nt^*]}
\tr\left(
A^{-1}_{i-1}(\bar\alpha_*)\frac{(\DeX)^{\otimes2}}{h_n}
\right)\\
&\qquad+\frac{1}{[nt^*]}\sum_{i=1}^{[nt^*]}
\left(\sum_{\ell_1,\ell_2=1}^d
\int_0^1\partial_\alpha 
(A^{-1}_{i-1}(\bar\alpha_*+u(\hat\alpha_n-\bar\alpha_*)))^{\ell_1,\ell_2}\dd u
\frac{(\DeX)^{\ell_1}(\DeX)^{\ell_2}}{h_n}
\right)
(\hat\alpha_n-\bar\alpha_*).
\end{align*}
From Lemma \ref{ke7}, we have 
\begin{align*}
\frac{1}{[nt^*]}\sum_{i=1}^{[nt^*]}\EE_{\alpha_1^*}\left[
\tr\left(
A^{-1}_{i-1}(\bar\alpha_*)\frac{(\DeX)^{\otimes2}}{h_n}
\right)\middle|\GG
\right]
&=
\frac{1}{[nt^*]}\sum_{i=1}^{[nt^*]}
\tr\left(
A^{-1}_{i-1}(\bar\alpha_*)\EE_{\alpha_1^*}\left[\frac{(\DeX)^{\otimes2}}{h_n}\middle|\GG\right]
\right)\\
&=
\frac{1}{[nt^*]}\sum_{i=1}^{[nt^*]}
\tr\left(
A^{-1}_{i-1}(\bar\alpha_*)A_{i-1}(\alpha_1^*)+\Ri
\right)\\
&\pto
\int_{\mathbb R^d}\tr(A^{-1}(x,\bar\alpha_*)A(x,\alpha_1^*))\dd\mu_{\alpha_1^*}(x)
=F(\alpha_1^*)
\end{align*}
and
\begin{align*}
&\frac{1}{[nt^*]^2}\sum_{i=1}^{[nt^*]}\EE_{\alpha_1^*}\left[
\left(\tr\left(
A^{-1}_{i-1}(\bar\alpha_*)\frac{(\DeX)^{\otimes2}}{h_n}
\right)\right)^2\middle|\GG
\right]\\
&=
\frac{1}{[nt^*]^2}\sum_{i=1}^{[nt^*]}\EE_{\alpha_1^*}\left[
\sum_{\ell_1,\ell_2=1}^d\sum_{\ell_3,\ell_4=1}^d
(A_{i-1}^{-1}(\bar\alpha_*))^{\ell_1,\ell_2}(A_{i-1}^{-1}(\bar\alpha_*))^{\ell_3,\ell_4}
\frac{(\DeX)^{\ell_1}(\DeX)^{\ell_2}(\DeX)^{\ell_3}(\DeX)^{\ell_4}}{h_n^2}
\middle|\GG\right]\\
&=
\frac{1}{[nt^*]^2}\sum_{i=1}^{[nt^*]}
\sum_{\ell_1,\ell_2=1}^d\sum_{\ell_3,\ell_4=1}^d
(A_{i-1}^{-1}(\bar\alpha_*))^{\ell_1,\ell_2}(A_{i-1}^{-1}(\bar\alpha_*))^{\ell_3,\ell_4}
\EE_{\alpha_1^*}\left[
\frac{(\DeX)^{\ell_1}(\DeX)^{\ell_2}(\DeX)^{\ell_3}(\DeX)^{\ell_4}}{h_n^2}
\middle|\GG\right]\\
&=
\frac{1}{[nt^*]^2}\sum_{i=1}^{[nt^*]}
\sum_{\ell_1,\ell_2=1}^d\sum_{\ell_3,\ell_4=1}^d
(A_{i-1}^{-1}(\bar\alpha_*))^{\ell_1,\ell_2}(A_{i-1}^{-1}(\bar\alpha_*))^{\ell_3,\ell_4}
\Ro\\
&=
\frac{1}{[nt^*]}\cdot\frac{1}{[nt^*]}\sum_{i=1}^{[nt^*]}\Ro
\pto0.
\end{align*}
Therefore, from Lemma 9 of Genon-Catalot and Jacod (1993),  
\begin{align}
\frac{1}{[nt^*]}\sum_{i=1}^{[nt^*]}
\tr\left(
A^{-1}_{i-1}(\bar\alpha_*)\frac{(\DeX)^{\otimes2}}{h_n}
\right)
\pto
F(\alpha_1^*).
\label{rt1}
\end{align}
On the other hand, from {\textbf{[B1]}} and
\begin{align*}
\frac{1}{[nt^*]}\sum_{i=1}^{[nt^*]}
\sum_{\ell_1,\ell_2=1}^d
\int_0^1\partial_\alpha 
(A_{i-1}^{-1}(\bar\alpha_*+u(\hat\alpha_n-\bar\alpha_*)))^{\ell_1,\ell_2}\dd u
\frac{(\DeX)^{\ell_1}(\DeX)^{\ell_2}}{h_n}=O_p(1),
\end{align*}
we have
\begin{align}
\left(\frac{1}{[nt^*]}\sum_{i=1}^{[nt^*]}
\sum_{\ell_1,\ell_2=1}^d
\int_0^1\partial_\alpha 
(A_{i-1}^{-1}(\bar\alpha_*+u(\hat\alpha_n-\bar\alpha_*)))^{\ell_1,\ell_2}\dd u
\frac{(\DeX)^{\ell_1}(\DeX)^{\ell_2}}{h_n}\right)(\hat\alpha_n-\bar\alpha_*)=o_p(1).
\label{rt2}
\end{align}
Hence, { it follows from \eqref{rt1} and \eqref{rt2} that we obtain \eqref{kk1}}.
In the same way, noting that $\alpha_0=\alpha_2^*$ 
if $[nt^*]+1 \le i\le n$, we have \eqref{kk2},
which completes the proof of Theorem \ref{th4}.
\end{proof}

\begin{proof}[\bf{Proof of Theorem \ref{th5}}]
If we prove
\begin{align}
&\frac{1}{[nt^*]h_n}\sum_{i=1}^{[nt^*]}\hat\xi_i\pto G(\beta_1^*),\label{ll1}\\
&\frac{1}{(n-[nt^*])h_n}\sum_{i=[nt^*]+1}^n\hat\xi_i\pto G(\beta_2^*),\label{ll2}
\end{align}
{ then, it follows from {\textbf{[B4]}} that} 
\begin{align*}
\frac{1}{nh_n}\sum_{i=1}^n\hat\xi_i
&=
\frac{[nt^*]}{n}\frac{1}{[nt^*]h_n}
\sum_{i=1}^{[nt^*]}\hat\xi_i
+\frac{n-[nt^*]}{n}\frac{1}{(n-[nt^*])h_n}\sum_{i=[nt^*]+1}^n\hat\xi_i\\
&\pto
t^*G(\beta_1^*)+(1-t^*)G(\beta_2^*)
\end{align*}
and
\begin{align*}
\frac{1}{nh_n}\sum_{i=1}^{[nt^*]}\hat\xi_i-\frac{[nt^*]}{n}\frac{1}{nh_n}\sum_{i=1}^n\hat\xi_i
&=
\frac{[nt^*]}{n}\left(
\frac{1}{[nt^*]h_n}\sum_{i=1}^{[nt^*]}\hat\xi_i-\frac{1}{nh_n}\sum_{i=1}^n\hat\xi_i
\right)\\
&\pto
t^*(G(\beta_1^*)-(t^*G(\beta_1^*)+(1-t^*)G(\beta_2^*)))\\
&=t^*(1-t^*)(G(\beta_1^*)-G(\beta_2^*))\neq0,
\end{align*}
 { i.e.,} 
\begin{align*}
T_{1,n}^\beta
=\frac{1}{\sqrt{dnh_n}}\max_{1\le k\le n}
\left|
\sum_{i=1}^k\hat\xi_i-\frac{k}{n}\sum_{i=1}^n\hat\xi_i
\right|
&=\frac{1}{\sqrt{dnh_n}}\sup_{0\le s\le 1}
\left|
\sum_{i=1}^{[ns]}\hat\xi_i-\frac{[ns]}{n}\sum_{i=1}^n\hat\xi_i
\right|\\
&\ge
\frac{1}{\sqrt{dnh_n}}
\left|
\sum_{i=1}^{[nt^*]}\hat\xi_i-\frac{[nt^*]}{n}\sum_{i=1}^n\hat\xi_i
\right|\\
&=\sqrt{\frac{nh_n}{d}}
\left|
\frac{1}{nh_n}\sum_{i=1}^{[nt^*]}\hat\xi_i-\frac{[nt^*]}{n}\frac{1}{nh_n}\sum_{i=1}^n\hat\xi_i
\right|\lto\infty.
\end{align*}
{ Hence, one has $P(T_{1,n}^\beta > w_1(\epsilon))\lto1$.}

Let us show \eqref{ll1}. We can express
\begin{align*}
&\frac{1}{[nt^*]h_n}\sum_{i=1}^{[nt^*]}\hat\xi_i\\
&=\frac{1}{[nt^*]h_n}\sum_{i=1}^{[nt^*]}
\kappa_{i-1}(\hat\alpha_n)(\Xt-\Xs-h_nb_{i-1}(\hat\beta_n))\\
&=\frac{1}{[nt^*]h_n}\sum_{i=1}^{[nt^*]}
\left(\kappa_{i-1}(\alpha_0)+(\hat\alpha_n-\alpha_0)^\TT
\int_0^1\partial_\alpha\kappa_{i-1}(\alpha_0+u(\hat\alpha_n-\alpha_0))^\TT\dd u
\right)
(\Xt-\Xs-h_nb_{i-1}(\hat\beta_n))\\
&=\frac{1}{[nt^*]}\sum_{i=1}^{[nt^*]}
\kappa_{i-1}(\alpha_0)\frac{\DeX}{h_n}
-\frac{1}{[nt^*]}\sum_{i=1}^{[nt^*]}\kappa_{i-1}(\alpha_0)b_{i-1}(\hat\beta_n)\\
&\qquad+\sqrt n(\hat\alpha_n-\alpha_0)^\TT
\left(
\frac{1}{\sqrt{nh_n}[nt^*]}\sum_{i=1}^{[nt^*]}
\int_0^1\partial_\alpha\kappa_{i-1}(\alpha_0+u(\hat\alpha_n-\alpha_0))^\TT\dd u
\left(\frac{\DeX}{\sqrt{h_n}}-\sqrt{h_n}b_{i-1}(\hat\beta_n)\right)
\right).
\end{align*}
From Lemma \ref{ke7}, we have 
\begin{align*}
\frac{1}{[nt^*]}\sum_{i=1}^{[nt^*]}\EE_{\theta_1^*}\left[
\kappa_{i-1}(\alpha_0)\frac{\DeX}{h_n}
\middle|\GG
\right]
&=
\frac{1}{[nt^*]}\sum_{i=1}^{[nt^*]}
\kappa_{i-1}(\alpha_0)(b_{i-1}(\beta_1^*)+\Ri)\\
&\pto
\int_{\mathbb R^d}\kappa(x,\alpha_0)b(x,\beta_1^*)\dd\mu_{\theta_1^*}(x)
\end{align*}
and
\begin{align*}
\frac{1}{[nt^*]^2}\sum_{i=1}^{[nt^*]}\EE_{\theta_1^*}\left[
\left(\kappa_{i-1}(\alpha_0)\frac{\DeX}{h_n}\right)^2
\middle|\GG
\right]
&=
\frac{1}{[nt^*]^2h_n}\sum_{i=1}^{[nt^*]}
\kappa_{i-1}(\alpha_0)
\EE_{\theta_1^*}\left[
\frac{(\DeX)^{\otimes2}}{h_n}
\middle|\GG\right]
\kappa_{i-1}(\alpha_0)^\TT\\
&=
\frac{1}{[nt^*]h_n}\frac{1}{[nt^*]}\sum_{i=1}^{[nt^*]}\Ro
\pto0.
\end{align*}
Therefore, from Lemma 9 of Genon-Catalot and Jacod (1993),  
\begin{align}
\frac{1}{[nt^*]}\sum_{i=1}^{[nt^*]}
\kappa_{i-1}(\alpha_0)\frac{\DeX}{h_n}
\pto
\int_{\mathbb R^d}\kappa(x,\alpha_0)b(x,\beta_1^*)\dd\mu_{\theta_1^*}(x).
\label{rs1}
\end{align}
On the other hand, from \textbf{[B3']} and 
\begin{align*}
&\frac{1}{[nt^*]}\sum_{i=1}^{[nt^*]}
\kappa_{i-1}(\alpha_0)
\int_0^1\partial_\beta b_{i-1}(\bar\beta_*+u(\hat\beta_n-\bar\beta_*))\dd u=O_p(1),\\
&\frac{1}{[nt^*]}\sum_{i=1}^{[nt^*]}
\int_0^1\partial_\alpha\kappa_{i-1}(\alpha_0+u(\hat\alpha_n-\alpha_0))^\TT\dd u
\left(\frac{\DeX}{\sqrt{h_n}}-\sqrt{h_n}b_{i-1}(\hat\beta_n)\right)
=O_p(1), 
\end{align*}
we have
\begin{align}
&\frac{1}{[nt^*]}\sum_{i=1}^{[nt^*]}\kappa_{i-1}(\alpha_0)b_{i-1}(\hat\beta_n)
\nonumber\\
&=
\frac{1}{[nt^*]}\sum_{i=1}^{[nt^*]}\kappa_{i-1}(\alpha_0)
\left(
b_{i-1}(\bar\beta_*)
+\int_0^1\partial_\beta b_{i-1}(\bar\beta_*+u(\hat\beta_n-\bar\beta_*))\dd u
(\hat\beta_n-\bar\beta_*)
\right)\nonumber\\
&=
\frac{1}{[nt^*]}\sum_{i=1}^{[nt^*]}\kappa_{i-1}(\alpha_0)b_{i-1}(\bar\beta_*)
+
\left(
\frac{1}{[nt^*]}\sum_{i=1}^{[nt^*]}
\kappa_{i-1}(\alpha_0)
\int_0^1\partial_\beta b_{i-1}(\bar\beta_*+u(\hat\beta_n-\bar\beta_*))\dd u
\right)(\hat\beta_n-\bar\beta_*)\nonumber\\
&\pto
\int_{\mathbb R^d}\kappa(x,\alpha_0)b(x,\bar\beta_*)\dd\mu_{\theta_1^*}(x)
\label{rs3}
\end{align}
and
\begin{align}
\sqrt n(\hat\alpha_n-\alpha_0)^\TT
\left(
\frac{1}{\sqrt{nh_n}[nt^*]}\sum_{i=1}^{[nt^*]}
\int_0^1\partial_\alpha\kappa_{i-1}(\alpha_0+u(\hat\alpha_n-\alpha_0))^\TT\dd u
\left(\frac{\DeX}{\sqrt{h_n}}-\sqrt{h_n}b_{i-1}(\hat\beta_n)\right)
\right)=o_p(1).
\label{rs2}
\end{align}
Hence, from \eqref{rs1}, \eqref{rs3} and \eqref{rs2}, we obtain \eqref{ll1}.
In the same manner, noting that $\beta_0=\beta_2^*$ 
if $[nt^*]+1 \le i\le n$, we have \eqref{ll2},
which completes the proof of Theorem \ref{th5}.
\end{proof}

\begin{proof}[\bf{Proof of Theorem \ref{th6}}]
If we prove
\begin{align}
&\frac{1}{[nt^*]h_n}\sum_{i=1}^{[nt^*]}\hat\zeta_i\pto H(\beta_1^*),\label{mm1}\\
&\frac{1}{(n-[nt^*])h_n}\sum_{i=[nt^*]+1}^n\hat\zeta_i\pto H(\beta_2^*),\label{mm2}
\end{align}
we obtain, from {\textbf{[B5]}},
\begin{align*}
\frac{1}{nh_n}\sum_{i=1}^n\hat\zeta_i
&=
\frac{[nt^*]}{n}\frac{1}{[nt^*]h_n}
\sum_{i=1}^{[nt^*]}\hat\zeta_i
+\frac{n-[nt^*]}{n}\frac{1}{(n-[nt^*])h_n}\sum_{i=[nt^*]+1}^n\hat\zeta_i\\
&\pto
t^*H(\beta_1^*)+(1-t^*)H(\beta_2^*)
\end{align*}
and
\begin{align*}
\frac{1}{nh_n}\sum_{i=1}^{[nt^*]}\hat\zeta_i-\frac{[nt^*]}{n}\frac{1}{nh_n}\sum_{i=1}^n\hat\zeta_i
&=
\frac{[nt^*]}{n}\left(
\frac{1}{[nt^*]h_n}\sum_{i=1}^{[nt^*]}\hat\zeta_i-\frac{1}{nh_n}\sum_{i=1}^n\hat\zeta_i
\right)\\
&\pto
t^*(H(\beta_1^*)-(t^*H(\beta_1^*)+(1-t^*)H(\beta_2^*)))\\
&=t^*(1-t^*)(H(\beta_1^*)-H(\beta_2^*))\neq0,
\end{align*}
 { i.e.,} 
\begin{align*}
T_{2,n}^\beta
&=\frac{1}{\sqrt{nh_n}}\max_{1\le k\le n}
\left\|
\mathcal I_n^{-1/2}\left(\sum_{i=1}^k\hat\zeta_i-\frac{k}{n}\sum_{i=1}^n\hat\zeta_i
\right)
\right\|\\
&=\frac{1}{\sqrt{nh_n}}\sup_{0\le s\le 1}
\left\|
\mathcal I_n^{-1/2}\left(
\sum_{i=1}^{[ns]}\hat\zeta_i-\frac{[ns]}{n}\sum_{i=1}^n\hat\zeta_i
\right)\right\|\\
&\ge
\frac{1}{\sqrt{nh_n}}
\left\|
\mathcal I_n^{-1/2}\left(
\sum_{i=1}^{[nt^*]}\hat\zeta_i-\frac{[nt^*]}{n}\sum_{i=1}^n\hat\zeta_i
\right)\right\|\\
&=\sqrt{nh_n}
\left\|
\mathcal I_n^{-1/2}\left(
\frac{1}{nh_n}\sum_{i=1}^{[nt^*]}\hat\zeta_i
-\frac{[nt^*]}{n}\frac{1}{nh_n}\sum_{i=1}^n\hat\zeta_i
\right)\right\|\lto\infty.
\end{align*}


Let us show \eqref{mm1}. 
One has that 
from \textbf{[A9]},
\begin{align*}
&\frac{1}{[nt^*]h_n}\sum_{i=1}^{[nt^*]}\hat\zeta_i^\ell\\
&=\frac{1}{[nt^*]h_n}\sum_{i=1}^{[nt^*]}
\sum_{\ell_1,\ell_2=1}^d
\partial_{\beta^\ell} b_{i-1}^{\ell_1}(\hat\beta_n)(A_{i-1}^{-1}(\hat\alpha_n))^{\ell_1,\ell_2}
(\Xt-\Xs-h_nb_{i-1}(\hat\beta_n))^{\ell_2}\\
&=\frac{1}{[nt^*]h_n}\sum_{i=1}^{[nt^*]}
\sum_{\ell_1,\ell_2=1}^d
\partial_{\beta^\ell} b_{i-1}^{\ell_1}(\hat\beta_n)
\left(
(A_{i-1}^{-1}(\alpha_0))^{\ell_1,\ell_2}
+\int_0^1\partial_\alpha(A_{i-1}^{-1}(\alpha_0+u(\hat\alpha_n-\alpha_0)))^{\ell_1,\ell_2} \dd u
(\hat\alpha_n-\alpha_0)
\right)\\
&\qquad\qquad\qquad\cdot((\DeX)^{\ell_2}-h_nb_{i-1}^{\ell_2}(\hat\beta_n))\\
&=\frac{1}{[nt^*]}\sum_{i=1}^{[nt^*]}
\sum_{\ell_1,\ell_2=1}^d
\partial_{\beta^\ell} b_{i-1}^{\ell_1}(\hat\beta_n)
(A_{i-1}^{-1}(\alpha_0))^{\ell_1,\ell_2}\frac{(\DeX)^{\ell_2}}{h_n}\\
&\qquad-\frac{1}{[nt^*]}\sum_{i=1}^{[nt^*]}
\sum_{\ell_1,\ell_2=1}^d
\partial_{\beta^\ell} b_{i-1}^{\ell_1}(\hat\beta_n)
(A_{i-1}^{-1}(\alpha_0))^{\ell_1,\ell_2}
b_{i-1}^{\ell_2}(\hat\beta_n)\\
&\qquad+
\left(
\frac{1}{\sqrt{nh_n}[nt^*]}\sum_{i=1}^{[nt^*]}
\sum_{\ell_1,\ell_2=1}^d
\partial_{\beta^\ell} b_{i-1}^{\ell_1}(\hat\beta_n)
\left(\frac{(\DeX)^{\ell_2}}{\sqrt{h_n}}-\sqrt{h_n}b_{i-1}^{\ell_2}(\hat\beta_n)\right)
\right.\\
&\qquad\qquad\qquad\left.
\cdot
\int_0^1\partial_\alpha (A_{i-1}^{-1}(\alpha_0+u(\hat\alpha_n-\alpha_0)))^{\ell_1,\ell_2}\dd u
\right)\sqrt n(\hat\alpha_n-\alpha_0)\\
&=
\frac{1}{[nt^*]}\sum_{i=1}^{[nt^*]}\sum_{\ell_1,\ell_2=1}^d
\left(
\partial_{\beta^\ell}b_{i-1}^{\ell_1}(\bar\beta_*)
+\sum_{j=1}^{m-1}\partial_\beta^j\partial_{\beta^\ell}
b_{i-1}^{\ell_1}(\bar\beta_*)\otimes (\hat\beta_n-\bar\beta_*)^{\otimes j}
\right.\\
&\qquad\qquad\qquad\left.
+\int_0^1\partial_\beta^m\partial_{\beta^\ell}b_{i-1}^{\ell_1}
(\bar\beta_*+u(\hat\beta_n-\bar\beta_*))\dd u\otimes
(\hat\beta_n-\bar\beta_*)^{\otimes m}
\right)
(A_{i-1}^{-1}(\alpha_0))^{\ell_1,\ell_2}\frac{(\DeX)^{\ell_2}}{h_n}\\
&\qquad-
\frac{1}{[nt^*]}\sum_{i=1}^{[nt^*]}
\sum_{\ell_1,\ell_2=1}^d
\partial_{\beta^\ell} b_{i-1}^{\ell_1}(\hat\beta_n)
(A_{i-1}^{-1}(\alpha_0))^{\ell_1,\ell_2}
b_{i-1}^{\ell_2}(\hat\beta_n)\\
&\qquad+
\left(
\frac{1}{\sqrt{nh_n}[nt^*]}\sum_{i=1}^{[nt^*]}
\sum_{\ell_1,\ell_2=1}^d
\partial_{\beta^\ell} b_{i-1}^{\ell_1}(\hat\beta_n)
\left(\frac{(\DeX)^{\ell_2}}{\sqrt{h_n}}-\sqrt{h_n}b_{i-1}^{\ell_2}(\hat\beta_n)\right)
\right.\\
&\qquad\qquad\qquad\left.
\cdot
\int_0^1\partial_\alpha (A_{i-1}^{-1}(\alpha_0+u(\hat\alpha_n-\alpha_0)))^{\ell_1,\ell_2}\dd u
\right)\sqrt n(\hat\alpha_n-\alpha_0)\\
&=
\frac{1}{[nt^*]}\sum_{i=1}^{[nt^*]}\sum_{\ell_1,\ell_2=1}^d
\left(
\partial_{\beta^\ell}b_{i-1}^{\ell_1}(\bar\beta_*)
+\sum_{j=1}^{m-1}\partial_\beta^j\partial_{\beta^\ell}
b_{i-1}^{\ell_1}(\bar\beta_*)\otimes (\hat\beta_n-\bar\beta_*)^{\otimes j}
\right)
(A_{i-1}^{-1}(\alpha_0))^{\ell_1,\ell_2}\frac{(\DeX)^{\ell_2}}{h_n}\\
&\qquad
+\frac{1}{(nh_n)^{m/2}[nt^*]}\sum_{i=1}^{[nt^*]}\sum_{\ell_1,\ell_2=1}^d
\left(
\int_0^1\partial_\beta^m\partial_{\beta^\ell}b_{i-1}^{\ell_1}
(\bar\beta_*+u(\hat\beta_n-\bar\beta_*))\dd u\otimes
(\sqrt{nh_n}(\hat\beta_n-\bar\beta_*))^{\otimes m}
\right)\\
&\qquad\qquad\qquad
\cdot(A_{i-1}^{-1}(\alpha_0))^{\ell_1,\ell_2}\frac{(\DeX)^{\ell_2}}{h_n}\\
&\qquad-
\frac{1}{[nt^*]}\sum_{i=1}^{[nt^*]}
\sum_{\ell_1,\ell_2=1}^d
\partial_{\beta^\ell} b_{i-1}^{\ell_1}(\hat\beta_n)
(A_{i-1}^{-1}(\alpha_0))^{\ell_1,\ell_2}
b_{i-1}^{\ell_2}(\hat\beta_n)\\
&\qquad+
\left(
\frac{1}{\sqrt{nh_n}[nt^*]}\sum_{i=1}^{[nt^*]}
\sum_{\ell_1,\ell_2=1}^d
\partial_{\beta^\ell} b_{i-1}^{\ell_1}(\hat\beta_n)
\left(\frac{(\DeX)^{\ell_2}}{\sqrt{h_n}}-\sqrt{h_n}b_{i-1}^{\ell_2}(\hat\beta_n)\right)
\right.\\
&\qquad\qquad\qquad\left.
\cdot
\int_0^1\partial_\alpha (A_{i-1}^{-1}(\alpha_0+u(\hat\alpha_n-\alpha_0)))^{\ell_1,\ell_2}\dd u
\right)\sqrt n(\hat\alpha_n-\alpha_0).
\end{align*}
It follows from Lemma \ref{ke7} that for $f\in C_{\uparrow}^{4,1}(\mathbb R^d\times \Theta_B)$,
\begin{align*}
&\frac{1}{[nt^*]}\sum_{i=1}^{[nt^*]}\EE_{\theta_1^*}\left[
f_{i-1}(\bar\beta_*)(A_{i-1}^{-1}(\alpha_0))^{\ell_1,\ell_2}\frac{(\DeX)^{\ell_2}}{h_n}
\middle|\GG
\right]\\
&=
\frac{1}{[nt^*]}\sum_{i=1}^{[nt^*]}
f_{i-1}(\bar\beta_*)(A_{i-1}^{-1}(\alpha_0))^{\ell_1,\ell_2}
(b_{i-1}^{\ell_2}(\beta_1^*)+\Ri)\\
&\pto
\int_{\mathbb R^d}
f(x,\bar\beta_*)(A^{-1}(x,\alpha_0))^{\ell_1,\ell_2}
b^{\ell_2}(x,\beta_1^*)\dd\mu_{\theta_1^*}(x)
\end{align*}
and
\begin{align*}
&\frac{1}{[nt^*]^2}\sum_{i=1}^{[nt^*]}\EE_{\theta_1^*}\left[
\left(
f_{i-1}(\bar\beta_*)(A_{i-1}^{-1}(\alpha_0))^{\ell_1,\ell_2}\frac{(\DeX)^{\ell_2}}{h_n}
\right)^2
\middle|\GG
\right]\\
&=
\frac{1}{[nt^*]^2h_n}\sum_{i=1}^{[nt^*]}
(f_{i-1}(\bar\beta_*)(A_{i-1}^{-1}(\alpha_0))^{\ell_1,\ell_2})^2
\EE_{\theta_1^*}\left[
\frac{((\DeX)^{\ell_2})^2}{h_n}
\middle|\GG\right]\\
&=
\frac{1}{[nt^*]h_n}\frac{1}{[nt^*]}\sum_{i=1}^{[nt^*]}\Ro
\pto0.
\end{align*}
Therefore, by Lemma 9 of Genon-Catalot and Jacod (1993),  
\begin{align*}
\frac{1}{[nt^*]}\sum_{i=1}^{[nt^*]}
f_{i-1}(\bar\beta_*)(A_{i-1}^{-1}(\alpha_0))^{\ell_1,\ell_2}\frac{(\DeX)^{\ell_2}}{h_n}
\pto
\int_{\mathbb R^d}
f(x,\bar\beta_*)(A^{-1}(x,\alpha_0))^{\ell_1,\ell_2}
b^{\ell_2}(x,\beta_1^*)\dd\mu_{\theta_1^*}(x),
\end{align*}
From this and {\textbf{[B3]}}, we obtain
\begin{align*}
&\frac{1}{[nt^*]}\sum_{i=1}^{[nt^*]}\sum_{\ell_1,\ell_2=1}^d
\left(
\partial_{\beta^\ell}b_{i-1}^{\ell_1}(\bar\beta_*)
+\sum_{j=1}^{m-1}\partial_\beta^j\partial_{\beta^\ell}
b_{i-1}^{\ell_1}(\bar\beta_*)\otimes (\hat\beta_n-\bar\beta_*)^{\otimes j}
\right)
(A_{i-1}^{-1}(\alpha_0))^{\ell_1,\ell_2}\frac{(\DeX)^{\ell_2}}{h_n}\\
&\pto
\int_{\mathbb R^d}
\sum_{\ell_1,\ell_2=1}^d\partial_{\beta^\ell}b^{\ell_1}(x,\bar\beta_*)
(A^{-1}(x,\alpha_0))^{\ell_1,\ell_2}
b^{\ell_2}(x,\beta_1^*)\dd\mu_{\theta_1^*}(x),
\end{align*}
 { i.e.,} 
\begin{align}
\frac{1}{[nt^*]}\sum_{i=1}^{[nt^*]}
\partial_{\beta}b_{i-1}(\hat\beta_n)^\TT
A_{i-1}^{-1}(\alpha_0)\frac{\DeX}{h_n}
\pto
\int_{\mathbb R^d}
\partial_\beta b(x,\bar\beta_*)^\TT
A^{-1}(x,\alpha_0)
b(x,\beta_1^*)\dd\mu_{\theta_1^*}(x).
\label{rr1}
\end{align}

On the other hand, from {\textbf{[B3]}} and
\begin{align*}
&\frac{1}{[nt^*]}\sum_{i=1}^{[nt^*]}
\sum_{\ell_1,\ell_2=1}^d
\partial_{\beta^\ell}b_{i-1}^{\ell_1}(\bar\beta_*)(A_{i-1}^{-1}(\alpha_0))^{\ell_1,\ell_2}
\int_0^1\partial_\beta b_{i-1}^{\ell_2}(\bar\beta_*+u(\hat\beta_n-\bar\beta_*))\dd u
=O_p(1),\\
&\frac{1}{[nt^*]}\sum_{i=1}^{[nt^*]}
\sum_{\ell_1,\ell_2=1}^d
b_{i-1}^{\ell_2}(\bar\beta_*)(A_{i-1}^{-1}(\alpha_0))^{\ell_1,\ell_2}
\int_0^1\partial_\beta\partial_{\beta^\ell} b_{i-1}^{\ell_1}(\bar\beta_*+u(\hat\beta_n-\bar\beta_*))\dd u
=O_p(1),\\
&
\frac{1}{[nt^*]}\sum_{i=1}^{[nt^*]}\sum_{\ell_1,\ell_2=1}^d
\int_0^1\partial_\beta^m\partial_{\beta^\ell}b_{i-1}^{\ell_1}
(\bar\beta_*+u(\hat\beta_n-\bar\beta_*))\dd u
(A_{i-1}^{-1}(\alpha_0))^{\ell_1,\ell_2}\frac{(\DeX)^{\ell_2}}{\sqrt{h_n}}
=O_p(1),\\
&\frac{1}{[nt^*]}\sum_{i=1}^{[nt^*]}
\sum_{\ell_1,\ell_2=1}^d
\int_0^1\partial_\beta \partial_{\beta^\ell} 
b_{i-1}^{\ell_1}(\bar\beta_*+u(\hat\beta_n-\bar\beta_*))^\TT\dd u
(A_{i-1}^{-1}(\alpha_0))^{\ell_1,\ell_2}
\int_0^1\partial_\beta b_{i-1}^{\ell_2}(\bar\beta_*+u(\hat\beta_n-\bar\beta_*))\dd u
=O_p(1),\\
&\frac{1}{[nt^*]}\sum_{i=1}^{[nt^*]}
\sum_{\ell_1,\ell_2=1}^d
\partial_{\beta^\ell}b_{i-1}^{\ell_1}(\hat\beta_n)
\left(\frac{(\DeX)^{\ell_2}}{\sqrt{h_n}}-\sqrt{h_n}b_{i-1}^{\ell_2}(\hat\beta_n)\right)
\int_0^1\partial_\alpha (A_{i-1}^{-1}(\alpha_0+u(\hat\alpha_n-\alpha_0)))^{\ell_1,\ell_2}\dd u
=O_p(1),
\end{align*}
we have 
\begin{align*}
&\frac{1}{[nt^*]}\sum_{i=1}^{[nt^*]}
\sum_{\ell_1,\ell_2=1}^d
\partial_{\beta^\ell} b_{i-1}^{\ell_1}(\hat\beta_n)
(A_{i-1}^{-1}(\alpha_0))^{\ell_1,\ell_2}
b_{i-1}^{\ell_2}(\hat\beta_n)\\
&=
\frac{1}{[nt^*]}\sum_{i=1}^{[nt^*]}
\sum_{\ell_1,\ell_2=1}^d
\left(
\partial_{\beta^\ell} b_{i-1}^{\ell_1}(\bar\beta_*)
+\int_0^1\partial_\beta\partial_{\beta^\ell} 
b_{i-1}^{\ell_1}(\bar\beta_*+u(\hat\beta_n-\bar\beta_*))\dd u (\hat\beta_n-\bar\beta)
\right)
(A_{i-1}^{-1}(\alpha_0))^{\ell_1,\ell_2}\\
&\qquad\cdot\left(
b_{i-1}^{\ell_2}(\bar\beta_*)
+\int_0^1\partial_\beta b_{i-1}^{\ell_2}(\beta_1^*+u(\hat\beta_n-\bar\beta_*))\dd u
(\hat\beta_n-\bar\beta_*)
\right)\\
&=
\frac{1}{[nt^*]}\sum_{i=1}^{[nt^*]}
\sum_{\ell_1,\ell_2=1}^d
\partial_{\beta^\ell} b_{i-1}^{\ell_1}(\bar\beta_*)
(A_{i-1}^{-1}(\alpha_0))^{\ell_1,\ell_2}
b_{i-1}^{\ell_2}(\bar\beta_*)\\
&\qquad+
\frac{1}{[nt^*]}\sum_{i=1}^{[nt^*]}
\sum_{\ell_1,\ell_2=1}^d
\left(
\partial_{\beta^\ell} b_{i-1}^{\ell_1}(\bar\beta_*)
(A_{i-1}^{-1}(\alpha_0))^{\ell_1,\ell_2}
\int_0^1\partial_\beta b_{i-1}^{\ell_2}(\bar\beta_*+u(\hat\beta_n-\bar\beta_*))\dd u
\right.\\
&\left.\qquad\qquad\qquad+ b_{i-1}^{\ell_2}(\bar\beta_*)
(A_{i-1}^{-1}(\alpha_0))^{\ell_1,\ell_2}
\int_0^1\partial_\beta \partial_{\beta^\ell} 
b_{i-1}^{\ell_1}(\bar\beta_*+u(\hat\beta_n-\bar\beta_*))\dd u
\right)(\hat\beta_n-\bar\beta_*)\\
&\qquad+(\hat\beta_n-\bar\beta_*)^\TT
\left(
\frac{1}{[nt^*]}\sum_{i=1}^{[nt^*]}
\sum_{\ell_1,\ell_2=1}^d
\int_0^1\partial_\beta \partial_{\beta^\ell} 
b_{i-1}^{\ell_1}(\bar\beta_*+u(\hat\beta_n-\bar\beta_*))^\TT\dd u\right.\\
&\left.\qquad\qquad\qquad\qquad\cdot
(A_{i-1}^{-1}(\alpha_0))^{\ell_1,\ell_2}
\int_0^1\partial_\beta b_{i-1}^{\ell_2}(\bar\beta_*+u(\hat\beta_n-\bar\beta_*))\dd u
\right)
(\hat\beta_n-\bar\beta_*)\\
&\pto
\int_{\mathbb R^d}
\sum_{\ell_1,\ell_2=1}^d
\partial_{\beta^\ell}b^{\ell_1}(x,\bar\beta_*)(A^{-1}(x,\alpha_0))^{\ell_1,\ell_2}
b^{\ell_2}(x,\bar\beta_*)\dd\mu_{\theta_1^*}(x),
\end{align*}
 { i.e.,} 
\begin{align}
\frac{1}{[nt^*]}\sum_{i=1}^{[nt^*]}
\partial_{\beta}b_{i-1}(\hat\beta_n)^\TT A_{i-1}^{-1}(\alpha_0)b_{i-1}(\hat\beta_n)
\pto
\int_{\mathbb R^d}
\partial_{\beta}b(x,\bar\beta_*)^\TT A^{-1}(x,\alpha_0)
b(x,\bar\beta_*)\dd\mu_{\theta_1^*}(x),
\label{rr2}
\end{align}
and
\begin{align}
&\frac{1}{\sqrt{nh_n}[nt^*]}\sum_{i=1}^{[nt^*]}
\sum_{\ell_1,\ell_2=1}^d
\partial_{\beta^\ell}b^{\ell_1}_{i-1}(\hat\beta_n)
\left(\frac{(\DeX)^{\ell_2}}{\sqrt{h_n}}-\sqrt{h_n}b_{i-1}^{\ell_2}(\hat\beta_n)\right)
\int_0^1\partial_\alpha (A_{i-1}^{-1}(\alpha_0+u(\hat\alpha_n-\alpha_0)))^{\ell_1,\ell_2}\dd u
\nonumber\\
&=o_p(1).
\label{rr3}
\end{align}
Moreover, since $nh_n^{m/(m-1)}< nh_n^{(m+1)/m}$, we have, 
as $nh_n^{m/(m-1)}\lto\infty$ 
\begin{align}
&\frac{(nh_n)^{-m/2}}{[nt^*]}\sum_{i=1}^{[nt^*]}\sum_{\ell_1,\ell_2=1}^d
\left(
\int_0^1\partial_\beta^m\partial_{\beta^\ell}b_{i-1}^{\ell_1}
(\bar\beta_*+u(\hat\beta_n-\bar\beta_*))\dd u\otimes
(\sqrt{nh_n}(\hat\beta_n-\bar\beta_*))^{\otimes m}
\right)
(A_{i-1}^{-1}(\alpha_0))^{\ell_1,\ell_2}\frac{(\DeX)^{\ell_2}}{h_n}
\nonumber\\
&=O_p\left((nh_n^{(m+1)/m})^{-m/2}\right)
=o_p(1).
\label{rr4}
\end{align}
Hence, from \eqref{rr1}, \eqref{rr2}, \eqref{rr3} and \eqref{rr4}, 
we obtain \eqref{mm1}.
By an analogous way, noting that $\beta_0=\beta_2^*$ 
if $[nt^*]+1 \le i\le n$, we have \eqref{mm2},
which completes the proof of Theorem \ref{th6}.
\end{proof}
\begin{proof}[\bf{Proof of Corollary \ref{th7}}]
Note that by \textbf{[A9]} and \textbf{[B3]}, 
we can show \eqref{rr4} in the proof of Theorem \ref{th6}.
If we assume \textbf{[A9']}, then one has  
\begin{align*}
\frac{1}{[nt^*]}\sum_{i=1}^{[nt^*]}\sum_{\ell_1,\ell_2=1}^d
\left(
\int_0^1\partial_\beta^M\partial_{\beta^\ell}b_{i-1}^{\ell_1}
(\bar\beta_*+u(\hat\beta_n-\bar\beta_*))\dd u\otimes
(\hat\beta_n-\bar\beta_*)^{\otimes M}
\right)
(A_{i-1}^{-1}(\alpha_0))^{\ell_1,\ell_2}\frac{(\DeX)^{\ell_2}}{h_n}
=0
\end{align*}
corresponding to \eqref{rr4}.
Moreover,  in order to show \eqref{rr1}, \eqref{rr2} and \eqref{rr3} in Theorem \ref{th6}, 
it is sufficient  to assume $\hat\beta_n-\beta_*=o_p(1)$ in \textbf{[B3']}. 
Hence, this corollary is shown as in the proof of Theorem \ref{th6}.
\end{proof}



\begin{thebibliography}{99}
\harvarditem{}{}{} 
Billingsley, P. (1999).
Convergence of probability measures. Second edition.
Willy, New York.
\harvarditem{}{}{} 
De Gregorio, A., Iacus, S. M. (2008). 
Least squares volatility change point estimation 
for partially observed diffusion processes. 
Communications in Statistics - Theory and Methods, {\bf 37}, 2342-2357.
\harvarditem{}{}{} 
Genon-Catalot, V., Jacod, J. (1993).
On the estimation of the diffusion coefficient for multidimensional diffusion processes.
Annales de l'Institut Henri Poincar\'e Probabilit\'es et Statistiques, {\bf 29}, 119-151.
\harvarditem{}{}{} 
Hall, P., Heyde, C. C. (1980).
Martingale limit theory and its application.
Academic Press, New York.
\harvarditem{}{}{} 
Iacus, S. M., Yoshida, N. (2012).
Estimation for the change point of volatility in a stochastic differential equation.
Stochastic Processes and their Applications, {\bf 122}, 1068-1092.
\harvarditem{}{}{} 
Kaino, Y., Uchida, M. (2018).
Hybrid estimators for stochastic differential equations from reduced data. 
Statistical Inference for Stochastic Processes, {\bf 21}, 435-454.
\harvarditem{}{}{} 
Kessler, M. (1995).
Estimation des param\`etres d'une diffusion par des contrastes corrig\'es.
Comptes rendus de l'Acad\'emie des sciences. Paris Serie I  {\bf 320}, 359-362.
\harvarditem{}{}{} 
Kessler, M. (1997).
Estimation of an Ergodic Diffusion from Discrete Observations.
Scandinavian Journal of Statistics, {\bf 24}, 211-229.
\harvarditem{}{}{} 
Kitagawa, H., Uchida, M. (2014).
Adaptive test statistics for ergodic diffusion processes sampled at discrete times.
Journal of Statistical Planning and Inference, {\bf 150}, 84-110.
\harvarditem{}{}{} 
Kutoyants, Y. A. (2004).
Statistical inference for ergodic diffusion processes, Springer.
\harvarditem{}{}{} 
Lee, S. (2011). 
Change point test for dispersion parameter based on discretely observed sample 
from SDE models.
Bulletin of the Korean Mathematical Society, {\bf 48}, 839-845.
\harvarditem{}{}{} 
Lee, S., Nishiyama, Y., Yoshida, N. (2006).
Test for parameter change in diffusion processes by cusum statistics 
based on one-step estimators.
Annals of the Institute of Statistical Mathematics, {\bf 58}, 211-222.
\harvarditem{}{}{} 
McLeish, D., L. (1974).
Dependent central limit theorems and invariance principles.
The Annals of Probability, {\bf 2}, 620-628.
\harvarditem{}{}{} 
Negri, I., Nishiyama, Y. (2012).
Asymptotically distribution free test for parameter change in a diffusion process model.
Annals of the Institute of Statistical Mathematics, {\bf 64}, 911-918.
\harvarditem{}{}{} 
Negri, I., Nishiyama, Y. (2017).
Z-process method for change point problems with applications to discretely observed
diffusion processes.
Statistical Methods and Applications, {\bf 26,} 231-250.
\harvarditem{}{}{} 
Song, J. (2020).
Robust test for dispersion parameter change in discretely observed diffusion processes.
Computational Statistics and Data Analysis, {\bf 142}, 106832.
\harvarditem{}{}{} 
Song, J., Lee, S. (2009). 
Test for parameter change in discretely observed diffusion processes. 
Statistical Inference for Stochastic Processes, {\bf 12}, 165-183.
\harvarditem{}{}{} 
Tsukuda, K. (2017).
A change detection procedure for an ergodic diffusion process.
Annals of the Institute of Statistical Mathematics, {\bf 69}, 833-864.
%
\harvarditem{}{}{} 
Uchida, M. and Yoshida, N. (2011). Estimation for misspecified ergodic diffusion processes from discrete observations. European Series in Applied and Industrial Mathematics: Probability and Statistics, Volume 15, 270-290. 
\harvarditem{}{}{} 
Uchida, M., Yoshida, N. (2012).
Adaptive estimation of an ergodic diffusion process based on sampled data.
Stochastic Processes and their Applications, {\bf 122}, 2885-2924.
\end{thebibliography}
\end{document}